\documentclass[aos,preprint]{imsart}
\usepackage{amsmath,amstext,amssymb,amsfonts,amscd,color}
\RequirePackage[OT1]{fontenc}
\usepackage{graphicx}
\usepackage{array}
\usepackage{enumitem}
\usepackage[]{algorithm2e}

\usepackage{listings}
\usepackage[authoryear]{natbib}
\bibliographystyle{apalike}

\usepackage{amsmath}
\usepackage{amsthm}
\usepackage{amssymb}
\usepackage{amsfonts}
\usepackage{bbm}
\usepackage[margin = 1in]{geometry}

\usepackage{xparse}
\usepackage{tikz}
\usetikzlibrary{plotmarks}
\usetikzlibrary{matrix}
\usepackage{pgfplots}
\usetikzlibrary{matrix,backgrounds}
\pgfdeclarelayer{myback}
\pgfsetlayers{myback,background,main}
\usetikzlibrary{decorations.pathreplacing,angles,quotes}
\tikzset{mycolor/.style = {dashed,rounded corners,line width=1bp,color=#1}}%
\tikzset{myfillcolor/.style = {draw,fill=#1}}%


\newtheorem{example}{Example}[section]
\newtheorem{theorem}{Theorem}[section]
\newtheorem{lemma}[theorem]{Lemma}
\newtheorem{assumption}[theorem]{Assumption}

\newtheorem{proposition}[theorem]{Proposition}
\newtheorem{remark}[theorem]{Remark}
\usepackage{listings}
\usepackage{hyperref}
\usepackage[hypcap]{caption}
\usepackage{booktabs}
\lstset{frame = single, language = R}
\usepackage{fancyvrb}
\usepackage{comment}
\usepackage{multirow}
\usepackage{multibib}

\usepackage{color}

\startlocaldefs
\newcommand{\change}[1]{{\color{black}{#1}}}
\newcommand{\csv}[1]{{\color{black}{#1}}}

\makeatletter
\def\BState{\State\hskip-\ALG@thistlm}
\newcommand*{\addFileDependency}[1]{
  \typeout{(#1)}
  \@addtofilelist{#1}
  \IfFileExists{#1}{}{\typeout{No file #1.}}
}

\makeatother

\numberwithin{equation}{section}

\newcommand{\E}{\mathbb{E}}

\newcommand{\cF}{\mathcal{F}}

\newcommand{\cD}{\mathcal{D}}
\newcommand{\Dkonv}{\stackrel{\cD}{\longrightarrow}}
\newcommand{\floor}[1]{\lfloor #1 \rfloor}
\newcommand{\1}{{\bf{1}}}

\newcommand{\R}{\mathbb{R}}
\newcommand{\N}{\mathbb{N}}
\newcommand{\ceil}[1]{\lceil#1 \rceil}
\newcommand{\weak}{\rightsquigarrow }

\def\cov{{\mbox{cov}}}
\def\var{{\mbox{var}}}


\endlocaldefs

\begin{document}

\begin{frontmatter}
	\title{\bf Inference for Change-Points in High-dimensional Data via Self-normalization }
	\runtitle{Inference for Change-Points in High-dimensional Data}
	
	\begin{aug}
		\author{\fnms{Runmin} \snm{Wang}\thanksref{t1,m0}\ead[label=e0]{runminw@smu.edu}},	
		\author{\fnms{Changbo} \snm{Zhu}\thanksref{t1,m4}\ead[label=e1]{cbzhu@ucdavis.edu}},
		
		\author{\fnms{Stanislav} \snm{ Volgushev}\thanksref{t1,m2}\ead[label=e2]{stanislav.volgushev@utoronto.ca}}
		\and
		\author{\fnms{Xiaofeng } \snm{Shao}\thanksref{t1,m1}
			\ead[label=e3]{xshao@illinois.edu}}
		
		\thankstext{t1}{Runmin Wang is Assistant Professor at Southern Methodist University, Department of Statistical Science (email: runminw@smu.edu); Changbo Zhu is Postdoctoral Scholar, Department of  Statistics, University of California at Davis; Stanislav Volgushev is Assistant Professor at Department of Statistical Sciences, University of Toronto (email: stanislav.volgushev@utoronto.ca); Xiaofeng Shao is  Professor, Department of  Statistics, University of Illinois at Urbana-Champaign (e-mail: xshao@illinois.edu). Wang and Zhu are joint first authors and made equal contributions to the paper.}
	
		\runauthor{R. Wang et al.}
		
		\affiliation{Southern Methodist University\thanksmark{m0} and University of California at Davis\thanksmark{m4} and University of Toronto\thanksmark{m2} and				University of Illinois at Urbana-Champaign\thanksmark{m1} }
		
	\end{aug}

\begin{abstract}
	This article considers change point testing and estimation for a sequence of high-dimensional data. In the case of testing for a mean shift for high-dimensional independent data, we propose a new test which is based on $U$-statistic in \cite{chen2010two} and utilizes the self-normalization principle [\cite{shao2010self}, \cite{shao2010testing}]. Our test targets dense alternatives in the high-dimensional setting and involves no tuning parameters. To extend to change point testing for high-dimensional time series, we introduce a trimming parameter and formulate a self-normalized test statistic with trimming to accommodate the weak temporal dependence. On the theory front, we 	derive the limiting distributions of self-normalized test statistics under both the null and alternatives for both independent and dependent high-dimensional data. 	At the core of our asymptotic theory,  we obtain weak convergence of a sequential U-statistic based process for high-dimensional independent data, and weak convergence of  sequential trimmed U-statistic based processes for high-dimensional linear processes, both of which are of independent interests.  Additionally, we illustrate how our tests  can be used in combination with wild binary segmentation to estimate the number and location of multiple change points. 
	Numerical simulations demonstrate the competitiveness of our proposed testing and estimation procedures in comparison with several existing methods in the literature. 
	
\end{abstract}
\begin{keyword}[class=MSC]
	\kwd[Primary ]{62H15}
	\kwd{60K35}
	\kwd[; secondary ]{62G10}
	\kwd{62G20}
\end{keyword}

\begin{keyword}
	\kwd{CUSUM}
	\kwd{Segmentation}
	\kwd{Self-Normalization}
	\kwd{Structural Break}
	\kwd{Time Series}
	\kwd{U-Statistic}
\end{keyword}	

\end{frontmatter}

	\section{Introduction}

Suppose that  we have a sequence of  $\mathbb{R}^{p}$-valued observations $\{Y_t\}_{t = 1}^{n}$ which share the same distribution, except for possible change points in the mean vector $\mu_t=E(Y_t)$. We are interested in testing
\[
\mathcal{H}_0: \mu_1 = \mu_2 = \cdots= \mu_n \qquad v.s \qquad \mathcal{H}_1: \mu_1 = \cdots = \mu_{k_1}\neq \mu_{k_1+1} =\cdots = \mu_{k_s}\neq\mu_{k_s+1} \cdots = \mu_{n},
\]
for some unknown $s$ and $k_j$, \change{$j=1,...,s$}. Change point testing is a classical problem in statistics and econometrics and it has been extensively studied when the dimension $p$ is low and fixed. For univariate and low/fixed dimensional multivariate data, we refer the readers to \cite{aue2009break}, \cite{shao2010testing}, \cite{matteson2014nonparametric},  \cite{kirch2015detection}, \cite{zhang2018unsupervised} (among many others) for some recent work and \cite{perron2006dealing} and \cite{aue2013structural} for excellent reviews and the huge literature cited therein. A related problem is to estimate the number $s$ and the locations ($k_j$, \change{$j=1,...,s$}) of change points, which is also addressed in this paper.

Owing to the advances in science and technology, high-dimensional data is now produced in many areas, such as neuroscience, genomics and finance, among others. Structural change detection and estimation for high-dimensional data are of prime importance to understand the heterogeneity in the data as well as facilitate statistical modeling and inference. Among recent work that tackles
 change point testing and estimation for the mean of high-dimensional data and large panel data (allowing growing dimension), we mention \cite{horvath2012change}, \cite{chan2013darling},
 \citet{jirak2012change, jirak2015uniform}, \cite{cho2016change}, \cite{yu2017finite}, \cite{wang2018high}, \cite{dette2018relevant}, \cite{enikeeva2013high}. In the high-dimensional environment, we often classify the alternatives into two types: sparse and dense alternatives. \csv{In the change-point context, a sparse change means that only a few components of the vector change their mean, i.e. the $L^0$-norm of the mean change vector is much smaller than $p$; whereas dense change corresponds to the case that a change occurs for a substantial portion of the components. } 
  Several of the above-mentioned tests, including \cite{chan2013darling}, \citet{jirak2012change, jirak2015uniform}, \cite{yu2017finite}, \cite{dette2018relevant} and \cite{wang2018high}, specifically target sparse alternatives. For example, the test proposed by \cite{wang2018high} is based on projection under a sparsity assumption;  the test  by \cite{jirak2015uniform} is based on taking maximum of componentwise CUSUM statistics. On the other hand, the test by \cite{horvath2012change} aggregates the componentwise CUSUM statistic by using the sum, and is thus expected to have power against dense alternatives. However their asymptotic theory is mostly based on independent panel/component assumption and imposes the restrictive growth rate assumption $p/n^2=o(1)$; the test developed by 
\cite{enikeeva2013high}  is adaptive in the sense that it can capture both sparse and dense alternatives. However, the latter paper imposed Gaussian and independent components assumptions and the validity of their method seems questionable when these strong assumptions are violated (see Section~\ref{sec:simulation} for numerical evidence). The test by \cite{cho2016change} is based on the double CUSUM statistic which utilizes the cross-sectional change-point structure by examining the cumulative sums of ordered CUSUMs at each point. A standard binary segmentation procedure was used to estimate the multiple change points and its consistency was shown for high-dimensional time series. Note that several tuning parameters need to be chosen for the double CUSUM based procedure and the computation cost is high due to the use of bootstrap; see Section~\ref{sec:simulation} for some comparisons.

In this paper, we propose a new class of test statistics that target dense alternatives in the high-dimensional setting with either one single change point or multiple change points, which has received relatively less attention in the literature. The focus on the dense alternative can be well motivated by real data and is often the type of alternative we are interested in. For example, copy number variations in cancer cells are commonly manifested as change-points occurring at the same positions across many related data sequences corresponding to cancer samples and biologically-related individuals; see Fan and Mackey (2017). As a second example, the financial crisis is expected to have an impact on a large number of sectors and their stock returns, so a dense change is expected if we study the  stock returns time series for many sectors.  Our approach is nonparametric, requires quite mild structural assumptions on the data generating process, and does not impose any sparsity assumptions. Due to the use of self-normalization the limiting distributions of the proposed tests are pivotal. We note that, while self-normalized change point tests with pivotal limit were also obtained in \cite{shao2010testing} and \cite{zhang2018unsupervised}, the test statistics in the latter papers can not be used when $p \geq n$. Even when $p < n$ but $p$ is moderately large relative to $n$, those tests typically do not work well as shown in some unreported simulations.

To fix ideas, we begin by considering the setting of one single change point alternative for high-dimensional independent data. To construct a procedure that works under mild assumptions on $p$, we build upon the insights from \cite{chen2010two} who demonstrated that U-statistics provide a very effective means of  comparing two high-dimensional mean vectors. Deriving the limiting distribution of our tests requires control over a collection of high-dimensional sequential U-statistics computed from a growing number of different sub-samples. This is achieved by establishing the weak convergence of a two-parameter stochastic process in the form of sequential U-statistic under sensible and mild assumptions. Given this crucial theoretical ingredient, we are able to derive the limiting null distribution of our test for a single change point. Practically, critical values of the proposed test can be obtained by simulation as the limiting null distribution is pivotal, and the procedure is rather straightforward to implement as no tuning parameter is involved. We further derive the power under local alternatives.

Next, we present extensions of this approach to testing against an unknown number of change-points in the spirit of \cite{zhang2018unsupervised} (who only considered fixed $p$) and consider the problem of testing for a change point in the covariance matrix. As in the single change point setting we obtain tests with pivotal limits. All tests are examined in the simulation studies and exhibit quite accurate size and decent power properties relative to some existing ones.

To extend our U-statistic based approach to high-dimensional time series, we introduce a  trimmed version of the original U-statistic. As suggested by preliminary simulations and theoretical calculations this is crucial in the high-dimensional regime in order  to alleviate the impact of temporal dependence on the bias of U-statistic. This trimmed statistic provides a basic ingredient for self-normalized test under simple and multiple change-point alternatives. We derive the limiting distributions under both the null and alternatives for high-dimensional linear processes and under fixed-$b$ asymptotics [\cite{kiefer2005new}], i.e., we assume that the trimming parameter $\tau$ satisfies $\tau/n = \eta\in (0,1)$, and show how the resulting limiting null distribution depends on $\eta$. This provides a better approximation to the finite sample distribution than the conventional small-$b$ counterpart. Finally, we combine the idea of wild binary segmentation [\cite{fryzlewicz2014wild}] with the SN-based test statistic to estimate the number and location of change points, and demonstrate its effectiveness as compared to several competitors in the literature. 


The rest of the paper is structured as follows. Section~\ref{sec:teststat} introduces our SN-based test statistics for both one single change point and multiple change points alternatives.
A rigorous theoretical justification for their limiting properties under the null and alternatives is provided in Section~\ref{sec:theory}, which also contains
a theoretical extension to test for covariance matrix change. 
Section~\ref{sec:HDLP} presents an extension of the U-statistic based approach to the high-dimensional time series setting to test for a single mean shift. In Section~\ref{sec:estimation}, we present an algorithm based on wild binary segmentation and our SN-based test to estimate the number and locations of change points. Section~\ref{sec:simulation} contains all simulation results. Section~\ref{sec:conclusion} concludes. The technical proofs and some additional simulation results are relegated to supplementary material.

A word about notation.  For any real-valued vector \change{$\delta = (\delta_1, \delta_2,..., \delta_p)^T \in \mathbb{R}^p$}, its $L^1$-norm and $ L^2$-norm are denoted as $\| \delta \|_1 := \sum_{i=1}^p |\delta_i|$ and $\| \delta \|_2:= (\sum_{i=1}^p \delta_i^2)^{1/2}$.
 For any matrix \change{$A = (a_{i,j})_{i=1,..., n; j =1,..., m} \in \mathbb{R}^{n \times m}$}, its $L_{1}$ norm is denoted  as $\| A \|_{1} := \max_j \sum_{i =1}^n | a_{i,j} | $, $L_{\infty}$ norm denoted as $ \| A \|_{\infty} := \max_i \sum_{j =1}^m | a_{i,j} |$,  the spectral norm by $\|A\|_2 := \sigma_{max}(A)$, with $ \sigma_{max}$ denoting the largest singular value and Frobenius norm as $\|A\|_F := \{\sum_{i=1}^{n}\sum_{j=1}^{m}a_{i,j}^2\}^{1/2}$. \change{We denote the trace of \csv{a symmetric matrix} $A$ as $tr(A)$.} The joint cumulant of $n$ random variables \change{$Z_1,..., Z_n$} is denoted is as \change{$cum(Z_1, Z_2,..., Z_n)$}.
The notation \change{${\1}_E$ equals to $1$ if \csv{condition} $E$ is satisfied and zero otherwise. We use ``$\overset{\mathcal{D}}{\rightarrow}$'' to denote the convergence in distribution for \csv{random vectors}, and ``$\rightsquigarrow$'' to denote the weak convergence for stochastic processes.}

\section{Test statistics for high-dimensional independent data}
\label{sec:teststat}

\subsection{Single change-point} \label{sec:single}

To introduce our test statistic, we shall first focus on the single change point alternative, i.e.,
\[
\mathcal{H}_1':\mu_1=\mu_2=\cdots=\mu_k\not=\mu_{k+1}=\cdots=\mu_{n},~\mbox{for some}~1\le k\le n-1.
\]
An extension to general case (i.e., $\mathcal{H}_1$) will be made later. Assume that we observe a sample $Y_1,...,Y_n$. We shall describe the underlying rationale in forming our test in two steps. {We begin by recalling the U-statistic approach pioneered by {\cite{chen2010two}} for comparing high-dimensional means from two samples. For $x_1,...,x_4\in R^p$ define $h((x_1,x_2),(x_3,x_4))=(x_1-x_3)^T(x_2-x_4)$. Then
\[
E[h((X,X'),(Y,Y'))]=\|E(X)-E(Y)\|^2,
\]
where $(X',Y')$ is an i.i.d. copy of $(X,Y)$. In other words the parameter $\|E(X)-E(Y)\|^2$ can be estimated by a two-sample U-statistic with kernel $h$. This insight provides the basic building block for the following approach.}

\bigskip

\textbf{Step 1: Form U-statistic based process.} {For any given candidate change point location $k$ compute the two-sample U-Statistic
\begin{eqnarray*}
G_n(k) = \frac{1}{{k (k-1)}}\frac{1}{{(n-k)(n-k-1)}} \sum_{\stackrel{1 \leq j_1, j_3 \leq k}{j_3 \neq j_1}}\sum_{\stackrel{k+1 \leq j_2, j_4 \leq n}{j_2 \neq j_4}} (Y_{j_1}-Y_{j_2})^T(Y_{j_3}-Y_{j_4}).
\end{eqnarray*}
It is not hard to see that under $\mathcal{H}_0$, $\E[G_n(k)] = 0~\forall k$ while $\sup_k \E[G_n(k)] > 0$ under $\mathcal{H}_1'$. This suggests that a consistent test for $\mathcal{H}_1'$ can be constructed by considering the statistic
\[
\sup_{1 \leq k \leq n} w_n(k)|G_n(k)|
\]
with $w_n(k)$ denoting suitable weights. The first challenge in applying this test in practice lies in deriving the limiting distribution of $\sup_{1 \leq k \leq n} w_n(k) |G_n(k)|$ under the null. The results in \cite{chen2010two} suggest that each individual $G_n(k)$ is asymptotically normal, but that is insufficient to find the asymptotic distribution of $\sup_{1 \leq k \leq n} w_n(k) |G_n(k)|$. The process convergence theory that we develop in this paper enables us to overcome this challenge, and given our results it is possible to show that
\[
\sup_{1 \leq k \leq n} \|\Sigma\|_F^{-1} \left(\frac{2}{k(k-1)} + \frac{2}{(n-k)(n-k-1)} + \frac{4}{k(n-k)}\right)^{-1/2} |G_n(k)| \Dkonv W,
\]
where $W$ denotes a pivotal random variable and $\Sigma := Cov(Y_1)$. However, this does not directly lead to an applicable test since the scaling $\|\Sigma\|_F^{-1}$ is unknown. Ratio-consistent estimation of $\|\Sigma\|_F^2$ is a difficult problem when $p$ is large, and this is particularly true in the change point testing context. The estimator used in {\cite{chen2010two}} is consistent under the null, but no longer consistent under the alternative due to a change point in mean. It is possible to formulate Kolmogorov-Smirnov type test with consistent estimation of $\|\Sigma\|_F$ (see Section~\ref{sub:mean} for the details and simulation comparisons), but we will next propose to use an approach that completely avoids consistent estimation.}

\medskip

\textbf{Step 2: Self-normalization.} {The essence of SN is to avoid using a consistent estimator of the unknown parameter in the scale, which is $\|\Sigma\|_F^2$ in the present setting. As we mentioned before, consistent estimation of $\|\Sigma\|_F$ is difficult in the change point setting (especially with multiple unknown change points). The approach in {\cite{shao2010testing}} is not applicable in the present setting, however the basic strategy to use estimators from sub-samples still works after a suitable adaptation.} Define
\begin{equation} \label{eq:defDn}
D(k;\ell,m) := \sum_{\stackrel{\ell \leq j_1, j_3 \leq k}{j_3 \neq j_1}}\sum_{\stackrel{k+1 \leq j_2, j_4 \leq m}{j_2 \neq j_4}} (Y_{j_1}-Y_{j_2})^T(Y_{j_3}-Y_{j_4})
\end{equation}
for $1 \leq \ell \leq k < m \leq n$ and $D(k;\ell,m) = 0$ otherwise. Note that $D(k;1,n)$ is simply a scaled version of $G_n(k)$ defined previously while $D(k;\ell,m)$ can hence be interpreted as a scaled version of the U-Statistic $G_n$ computed on the sub-sample $Y_{\ell},Y_{\ell+1},...,Y_m$. Letting
\begin{equation} \label{eq:defWn}
W_n(k;\ell,m) := \frac{1}{n}\sum_{t = \ell+1 }^{k-2} D(t;\ell,k)^2 + \frac{1}{n}\sum_{t = k+2}^{m-2} D(t;k+1,m)^2,
\end{equation}
the self-normalized test statistic for the presence of a single change point takes the form
\begin{equation}\label{eq:defTn}
T_n := \sup_{k=4,...,n-4} \frac{\{D(k;1,n)\}^2}{W_n(k;1,n)}.
\end{equation}

Heuristically, the fact that $D$ computed on various sub-samples appears both in the numerator and denominator, means that the unknown factor $\|\Sigma\|_F^2$ in their variance cancels out and the limit becomes pivotal; see Theorem~\ref{cor:T0} for a formal statement. {The key to deriving the asymptotic distribution of $T_n$ defined above is to establish the joint behavior of the collection of statistics $D(k;\ell,m)$ indexed by $k, \ell, m$. Due to the U-Statistic nature of our problem this result does not follow from statements about $G_n(k)$ and involves additional technical difficulties.}

{
Note that our test statistic can be computed at the cost of $O(n^2p)$. 
To this end, observe that 
\begin{align*}
D(k;\ell,m) =~& 2(m-k)(m-k-1)  S_n(\ell,k) + 2 (k-\ell)(k-\ell+1) S_n(k+1,m)
\\
& - 2(k-\ell+1)(m-k) ( S_n(\ell,m) -  S_n(\ell,k) -  S_n(k+1,m)),
\end{align*}
where $S_n(k,m) = \sum_{i = k}^{m}\sum_{j = k}^{i} Y_{i+1}^TY_j$. Many quantities in $\{ S_n(k,m) \}_{k < m}$ are repeatedly used in the calculation of our test statistic $T_n$. The trick is to calculate $S_n(k,m)$ for all $1 \leq k < m \leq n$ first, which can be done with the cost $O(n^2p)$. Once $S_n(k,m)$ is available for all $k<m$, $D(k;\ell,m)$ can be computed at the cost of $O(1)$ for fixed $k,l,m$, and $T_n$ at the cost of $O(n^2)$. Hence the total computation cost is of order $O(n^2p)$.}




\subsection{Extension to multiple change-points}

In practice, the number of change points under the alternative is often unknown, which is the `unsupervised' case considered in {\cite{zhang2018unsupervised}}. It is expected that the SN-based test developed in the previous section may lose power when the number of change points is more than one; see Section~\ref{sub:mean} for simulation evidence. Thus it is desirable to develop a test that is adaptive,  i.e., has reasonable power without the need to specify the number of change points under the alternative. Here, we propose to combine the scanning idea in {\cite{zhang2018unsupervised}} and the SN-based test proposed above to form our unsupervised test statistic. To this end, we consider the following additional notation. Following {\cite{zhang2018unsupervised}} define the sets
\begin{align*}
\Omega(\epsilon) &= \{(t_1,t_2) \in [\epsilon,1-\epsilon]^2: t_1 < t_2,  t_2-t_1 \geq \epsilon\},
\\
\Omega_n(\epsilon) &= \{ (k_1,k_2) \in \N^2 : (k_1/n,k_2/n) \in \Omega(\epsilon)\},
\\
\mathcal{G}_\epsilon &= \{k\epsilon/2, k\in\mathbb{Z}\} \cap [0,1],
\\
\mathcal{G}_{\epsilon,n,f} &= \{(\floor{t_1 n}\vee 1,\floor{t_2 n}\vee 1) \in \N^2: (t_1,t_2)\in([0,1]\times\mathcal{G}_{\epsilon})\cap \Omega(\epsilon)\},
\end{align*}
and
\begin{align*}
\mathcal{G}_{\epsilon,n,b} &= \{(\floor{t_1 n}\vee 1,\floor{t_2 n}\vee 1) \in \N^2: (t_1,t_2)\in(\mathcal{G}_{\epsilon} \times [0,1])\cap \Omega(\epsilon)\}.
\end{align*}
The first test statistic now takes the form
\begin{equation} \label{eq:Tn_star}
T_n^{*} := \max_{(l_1,l_2) \in \Omega_n(\epsilon)}\frac{D(l_1;1,l_2)^2}{W_n(l_1;1,l_2)} + \max_{(m_1,m_2) \in \Omega_n(\epsilon)} \frac{D(m_2;m_1,n)^2}{W_n(m_2;m_1,n)}.
\end{equation}
One potential issue with this definition is that it involves the computation of $D_n(l_1;1,l_2)^2$ for $O(n^2)$ combinations of $l_1,l_2$ which can be expensive, especially when $n$ and $p$ are both large. To relax the computational burden, {\cite{zhang2018unsupervised}} also consider a discretised version. In our setting it takes the form
\begin{equation} \label{eq:Tn_diamond}
T_n^{\diamond} := \max_{(l_1,l_2) \in \mathcal{G}_{\epsilon,n,f}} \frac{D(l_1;1,l_2)^2}{W_n(l_1;1,l_2)} + \max_{(m_1,m_2) \in \mathcal{G}_{\epsilon,n,b}} \frac{D(m_2;m_1,n)^2}{W_n(m_2;m_1,n)}.
\end{equation}
It is worth noting that $\epsilon$ is a trimming parameter that needs to be specified by the user.
We set $\epsilon=0.1$ following the practice of \cite{zhang2018unsupervised}, who also provided some discussion on the
 role of $\epsilon$ in the testing.

\section{Theoretical properties}
\label{sec:theory}

Asymptotic properties of the proposed tests will be derived in a triangular array setting where $p = p_n$, the dimension of $X_0$, diverges to infinity. We will need the following  regularity assumptions.

\begin{assumption} \label{ass}
The observations are $Y_{t,n} = \mu_{t,n} + X_{t,n}, t=1,...,n$. $X_{1,n},...,X_{n,n}$ are i.i.d. copies of the $\R^{p_n}$-valued random vector $X_{0,n}$ with $\E[X_{0,n}] = 0$ and $\E[X_{0,n}X_{0,n}^T] = \Sigma_n$. Moreover
\begin{enumerate}[label=A.\arabic*]
\item \label{order}
{$tr(\Sigma_n^4) = o(\|\Sigma_n\|_F^4)$},
\item \label{cumulant} There exists a constant $C$ independent of $n$ such that
\change{\[
\sum_{l_1,...,l_h = 1}^{p}cum^2(X_{0,l_1,n},...,X_{0,l_h,n}) \leq C\|\Sigma_n\|_F^h,
\]}
for $h = 2,3,4,5,6$.
\end{enumerate}
\end{assumption}

We remark that the dimension $p = p_n$ of the vector $X_0$, the vectors $\mu_i$, and the covariance matrix $\Sigma_n$ change with $n$. To keep the notation simple this dependence will be dropped in all of the following results whenever there is no risk of confusion.

\begin{remark}[Discussion of Assumptions] \label{rem:condA} { \rm Simple computation shows that Assumption~\ref{order} is equivalent to $\|\Sigma_n\|_2 = o(\|\Sigma_n\|_F)$, see section~\ref{sec:proofA1A3} in the supplement for details. Hence Assumption~\ref{order} can only hold if $p =p_n \to \infty$ as $n \to \infty$. All other conditions can be satisfied under uniform bounds on moments and `short-range' dependence type conditions on the entries of the vector $(X_{0,1,n},...,X_{0,p_n,n})$. For illustration purposes, consider the following conditions.
\begin{enumerate}
\item[(i)] There exists $c_0 > 0$ independent of $n$ such that $\inf_{i=1,...,p_n} Var(X_{0,i}) \geq c_0$.
\item[(ii)] For $h=2,...,6$ there exist constants $C_h$ depending on $h$ only and a constant $r > 2$ independent of $n,h,m_1,...,m_h$ such that
\[
|cum(X_{0,m_1,n},...,X_{0,m_h,n})| \leq C_h (1 \vee \max_{1 \leq i,j\leq h} |m_i - m_j|)^{-r}.
\]
Note that this assumption is trivially satisfied if the entries of $(X_{0,1,n},...,X_{0,p_n,n})$ are m-dependent over $i$, i.e., if two groups $\{X_{0,i,n}: i \in J_1\}, \{X_{0,i,n}: i \in J_2\}$ are independent whenever $\inf_{i \in J_1, j \in J_2} |i-j| > m$ and if moments of order $h$ are uniformly bounded. It can also be verified under other conditions such as mixing plus moment assumptions [\cite{ZZ1975}] or physical dependence measures, see for instance  Proposition 2 of {\cite{wu2004limit}} and Theorem 4.1 of \cite{shaowu2007} for the latter.

\end{enumerate}
Now it is easy to prove (see section~\ref{sec:proofA1A3} in the supplement for details) that if $p_n \to \infty$, (i) holds and (ii) holds for some $r>3/2$ then Assumption \ref{ass} holds.
}
\end{remark}

{\begin{remark}[Comparison with \cite{chen2010two}]\label{rmk:cq}
 {\rm \change{Although \cite{chen2010two} studied a two-sample mean testing problem which is different from the change point setting \csv{we consider here, the weak cross-sectional dependence condition was also required in their theory to obtain a Gaussian limit.}}
 To quantify the dependence among different components of the vector $X_1$, \cite{chen2010two} proposed a factor model. More precisely they assume that $X_i = \Gamma Z_i$ where $Z_i$ are m-dimensional random vectors with the additional property $\E[Z_{t,l_1}^{\alpha_1}\cdots Z_{t,l_q}^{\alpha_q}] = \E[Z_{t,l_1}^{\alpha_1}]\cdots\E[Z_{t,l_q}^{\alpha_q}]$ for all $l_1\neq...\neq l_q$ and integers $\alpha_k \leq 4$ with $\sum_{k} \alpha_k \leq 8$. In contrast, we assume \ref{cumulant} without imposing a factor model structure. As we shall prove in section \ref{pr:rmk:cq}, the factor model structure of \cite{chen2010two} together with finite moments of order $6$ implies our condition \ref{cumulant}. Moreover, a close look at the proofs reveals that for proving finite-dimensional convergence we only require \ref{cumulant} with $h \leq 4$, which follows from the assumptions of \cite{chen2010two}. Hence, we prove a result which corresponds to that of \cite{chen2010two} under strictly weaker assumptions on the dependence structure and provide process convergence results under only slightly stronger moment conditions and still weaker structural assumptions.
}
\end{remark}}

\subsection{Properties of the test for a single change-point}\label{sec:thsingle}

We begin by deriving the limiting distribution of the test statistic $T_n$ defined in~\eqref{eq:defTn}.

\begin{theorem} \label{cor:T0}
Let Assumption~\ref{ass} hold. If $\mu_t \equiv \mu$ for a vector $\mu \in \R^p$ (i.e. under $\mathcal{H}_0$) then
\[
T_n \Dkonv T = \sup_{r \in [0,1]}\frac{G(r;0,1)^2}{\int_{0}^{r}G(u;0,r)^2du + \int_{r}^{1}G(u;r,1)^2du},
\]
where
\begin{align}\label{eq:defGrab}
G(r;a,b)
& := (b-a)(b-r)Q(a,r) + (r-a)(b-a)Q(r,b) - (r-a)(b-r)Q(a,b)
\end{align}
and $Q$ is a centered Gaussian process on $[0,1]^2$ with covariance structure given by
\begin{equation}\label{eq:covQ}
Cov(Q(a_1,b_1),Q(a_2,b_2)) =  (b_1 \wedge b_2 - a_1 \vee a_2)^2 \1\{b_1 \wedge b_2 > a_1 \vee a_2\}.
\end{equation}
\end{theorem}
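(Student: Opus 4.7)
The plan is to reduce $D(k;\ell,m)$ under the null to a linear combination of a single two-parameter U-statistic, establish weak convergence of that U-statistic process on $[0,1]^2$, and then conclude by the continuous mapping theorem. For the algebraic reduction: under $\mathcal{H}_0$ the constant mean $\mu$ cancels out of every difference $Y_{j_1} - Y_{j_2}$, so we may replace $Y_i$ by $X_i$ throughout. Expanding $(Y_{j_1} - Y_{j_2})^T (Y_{j_3} - Y_{j_4})$ into its four cross products and using the decomposition $U(\ell,m) = U(\ell,k) + U(k+1,m) + V(\ell,k;k+1,m)$, where $U(a,b) := \sum_{a \leq i < j \leq b} X_i^T X_j$ and $V$ is the corresponding cross-sum, gives the identity
\[
D(k;\ell,m) = 2(m-\ell)\bigl[(m-k-1)\, U(\ell,k) + (k-\ell)\, U(k+1,m)\bigr] - 2(k-\ell)(m-k-1)\, U(\ell,m).
\]
With $k = \lfloor nr \rfloor$, $\ell-1 = \lfloor na \rfloor$, $m = \lfloor nb \rfloor$ the polynomial coefficients are of order $n^3$ and the bracket structure is exactly the one that defines $G(r;a,b)$ in~\eqref{eq:defGrab}, so both the numerator of $T_n$ and each summand in its denominator become continuous functionals of the single family $\{U(\cdot,\cdot)\}$.

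Next I would introduce the normalised process
\[
Q_n(a,b) := \frac{\sqrt{2}}{n \|\Sigma\|_F}\, U(\lfloor na \rfloor + 1, \lfloor nb \rfloor), \qquad (a,b) \in [0,1]^2,
\]
and prove $Q_n \weak Q$ in $\ell^\infty([0,1]^2)$, where $Q$ is the centered Gaussian process with covariance~\eqref{eq:covQ}. A direct pair-counting computation confirms that $\mathrm{Cov}(Q_n(a_1,b_1), Q_n(a_2,b_2))$ converges to the covariance in~\eqref{eq:covQ}, and finite-dimensional convergence then follows from a CLT for degenerate bilinear forms in i.i.d.\ $\R^{p_n}$-vectors, with Assumption~\ref{cumulant} (for $h \leq 4$) supplying the fourth-cumulant control needed to rule out non-Gaussian limits. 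Tightness is the hard part: one must bound $\E |Q_n(a,b) - Q_n(a',b')|^q$ by a power of $\|(a,b) - (a',b')\|$ with exponent strictly larger than $2$, uniformly in $n$ and in the growing dimension $p_n$. Assumption~\ref{order} together with the higher-order cumulant bounds in~\ref{cumulant} ($h \leq 6$) are exactly what deliver such a uniform moment bound, after which a standard chaining argument for two-parameter processes closes the tightness step.

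Given $Q_n \weak Q$, the first-step identity rewrites $T_n = \Phi(Q_n) + o_P(1)$, where $\Phi(q)$ is obtained from the expression for $T$ in the theorem statement by replacing $Q$ by $q$; the $o_P(1)$ absorbs the Riemann-sum approximation errors $n^{-1}\sum_{t} G_n(t/n;\cdots)^2 - \int G_n(u;\cdots)^2\, du$, which are uniformly small thanks to the equicontinuity supplied by tightness. The functional $\Phi$ is continuous on the subset of $\ell^\infty([0,1]^2)$ on which the denominator is a strictly positive continuous function of $r$; this subset has full $Q$-measure because the Gaussian integrals are a.s.\ nondegenerate on every compact subinterval of $(0,1)$, while at the boundary $r \in \{0,1\}$ the numerator also vanishes so the ratio extends continuously by $0$. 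Continuous mapping then gives $T_n \Dkonv T = \Phi(Q)$. The main obstacle throughout is the tightness step: extending the pointwise CLT of \cite{chen2010two} to joint finite-dimensional convergence is routine once the cumulant assumption is in hand, but establishing a uniform modulus of continuity for a degenerate two-parameter bilinear process whose dimension $p = p_n$ simultaneously diverges with $n$ is the key new technical input, and is precisely where Assumptions~\ref{order} and~\ref{cumulant} do their work.
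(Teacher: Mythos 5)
Your proposal is correct and follows essentially the same route as the paper: the paper's Theorem~\ref{weakS} is precisely the weak convergence of your two-parameter process $Q_n$ (finite-dimensional distributions via a martingale CLT for the bilinear forms, tightness via a sixth-moment increment bound driven by the $h\le 6$ cumulant condition together with a chaining argument), and the algebraic reduction of $D(k;\ell,m)$ to a linear combination of that process plus the extended continuous mapping theorem with the a.s.\ positivity of the denominator is exactly how the paper concludes. The only minor differences are cosmetic: the paper verifies the martingale CLT conditions explicitly rather than citing a generic degenerate-bilinear-form CLT, and the denominator in fact stays bounded away from zero even at $r\in\{0,1\}$ (it equals $\int_0^1 G(u;0,1)^2\,du$ there), so no continuous extension of the ratio at the boundary is needed.
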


The limiting distribution $T$ is pivotal, and an asymptotic level $\alpha$ test for $\mathcal{H}_0: \mu_t \equiv \mu$ is thus given by the decision: reject $\mathcal{H}_0$ if $T_n > Q_T(1-\alpha)$ where $Q_T(1-\alpha)$ denotes the $1-\alpha$ quantile of the distribution of $T$. Simulated quantiles from this distribution (based on 10000 Monte Carlo replications) are provided in Table~\ref{tab:Tncrit}.

{
{Note that the above limiting null distribution requires that $p\wedge n\rightarrow\infty$, (this must hold for Assumption A.1 to be satisfied)}, and does not hold when $p$ is fixed and $n\rightarrow\infty$. Our SN-based test statistic $T_n$ builds on the two sample test statistic proposed by \cite{chen2010two}, whose limit under the fixed $p$ paradigm is expected to be non-Gaussian, as their test statistic is a degenerate $U$-statistic under the null. Here the assumption $p\rightarrow\infty$ is essential to our Gaussian process limit for the  two-parameter process
$\Big\{ \frac{\sqrt{2}}{n\|\Sigma\|_F}\widetilde S_n(\floor{an}+1,\floor{bn}-1) \Big\}_{(a,b)\in [0,1]^2},$
 which is the key to derive the limiting null distribution of $T_n$; see Section \ref{sec:appendixHDID} in the supplement.}

	
\begin{table}[h!]
\centering
\begin{tabular}{cccccc}
\hline
$\gamma$&80\%&90\%&95\%&99\%&99.5\%
\\
\hline
$Q_T(\gamma)$&603.72&  881.78 &1177.45 &2026.28 &2443.27
\\
\hline
\end{tabular}
\caption{Simulated quantiles of the limit $T$} \label{tab:Tncrit}
\end{table}

Next we consider the behavior of the test under alternatives. The following result shows that the test is consistent against local alternatives of a certain order if there is exactly one change-point.

\begin{theorem} \label{cor:T_alt}
Let Assumption~\ref{ass} hold. Assume that there exists $b^* \in (0,1)$ such that $\mu_t = \mu, t=1,...,\floor{b^* n}$ and $\mu_t = \mu + \delta_n, t=\floor{b^* n}+1,...,n$. Then
\begin{enumerate}
\item If $\sqrt{n}\|\delta_n\|_2/\|\Sigma\|_F^{1/2} \to \infty$ then $T_n \to \infty$ in probability.
\item If $\sqrt{n}\|\delta_n\|_2/\|\Sigma\|_F^{1/2} \to 0$ then $T_n \Dkonv T$.
\item If $\sqrt{n}\|\delta_n\|_2/\|\Sigma\|_F^{1/2} \to c \in (0,\infty)$ then
\[
T_n \Dkonv \sup_{r \in [0,1]}\frac{\{\sqrt{2} G(r;0,1) + c \Delta(r,0,1)\}^2}{\int_{0}^{r}\{\sqrt{2}G(u;0,r) + c \Delta(u,0,r)\}^2du + \int_{r}^{1}\{\sqrt{2}G(u;r,1) + c \Delta(u,r,1)\}^2du},
\]
where
\[
\Delta(r,a,b) := \begin{cases}
(b^*-a)^2(b-r)^2 & a < b^* \leq r < b,
\\
(r-a)^2(b-b^*)^2 & a < r < b^* < b,
\\
0 & b^* \le a \mbox{ or } b^* \ge b.
\end{cases}
\]
\end{enumerate}
\end{theorem}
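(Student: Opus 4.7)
The plan is to decompose each $D(k;\ell,m)$ into a centered stochastic piece, a purely deterministic signal piece, and a cross term, and then argue that the process convergence underlying Theorem~\ref{cor:T0} persists with an additive deterministic shift. Writing $Y_i = \eta_i + X_i$ and expanding the quadrilinear kernel in \eqref{eq:defDn} gives
\[
D(k;\ell,m) = D_X(k;\ell,m) + R(k;\ell,m) + M(k;\ell,m),
\]
where $D_X$ is the same $D$-statistic evaluated on the centered $X_i$'s, $M$ is a deterministic quadratic form in the $\eta_j$'s, and $R$ collects the bilinear $X$--$\eta$ cross terms. The machinery that delivers Theorem~\ref{cor:T0} already yields the joint process convergence $D_X(\floor{rn};\floor{an},\floor{bn})/(n^3\|\Sigma\|_F) \weak \sqrt{2}\, G(r;a,b)$ together with the corresponding convergence of $W_n$, uniformly over the index sets required by $T_n$.

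For the deterministic term I would exploit that $\eta_i$ is piecewise constant with a single jump $\delta_n$ at $k^*=\floor{b^*n}$. With sub-sample means $\bar\eta_{I_1}$, $\bar\eta_{I_2}$ on $I_1=[\ell,k]$ and $I_2=[k+1,m]$, a direct mean decomposition gives
\[
M(k;\ell,m) = n_1(n_1-1)\,n_2(n_2-1)\,\|\bar\eta_{I_1}-\bar\eta_{I_2}\|^2 + O(n^3\|\delta_n\|^2),
\]
and a short case analysis on the position of $k^*$ relative to $\ell\le k<m$ evaluates $\|\bar\eta_{I_1}-\bar\eta_{I_2}\|^2$ in closed form and verifies that the leading term equals $n^4\,\Delta(k/n,\ell/n,m/n)\,\|\delta_n\|^2$, uniformly in $(k,\ell,m)$ in the relevant index set. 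After dividing by $n^3\|\Sigma\|_F$ and using $\sqrt n\,\|\delta_n\|/\|\Sigma\|_F^{1/2}\to c$, this turns into a shift proportional to $\Delta(r,a,b)$ matching the expression in the limit.

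The main technical obstacle is uniform negligibility of the cross term $R$ at the scale $n^3\|\Sigma\|_F$. By the symmetry of \eqref{eq:defDn} in the two argument pairs, $R$ equals twice a sum of terms $(X_{j_1}-X_{j_2})^T(\eta_{j_3}-\eta_{j_4})$, whose leading-order form is approximately $2n_1^2n_2^2(\bar X_{I_1}-\bar X_{I_2})^T(\bar\eta_{I_1}-\bar\eta_{I_2})$. A direct variance computation then gives an upper bound of order $n^7\,\delta_n^T\Sigma\delta_n$, so the squared scaled version is of order $n\,\delta_n^T\Sigma\delta_n/\|\Sigma\|_F^2$. In case~(3) the boundary scaling forces $\|\Sigma\|_F^2\asymp n^2\|\delta_n\|^4$, and the hypothesis $\delta_n^T\Sigma\delta_n=o(n\|\delta_n\|^4)$ is precisely what is needed to drive this variance to zero; promoting the pointwise estimate to a uniform bound over the three-parameter index requires a maximal inequality of the same flavour as the one used for $D_X$ in Theorem~\ref{cor:T0}, and the cumulant bounds in Assumption~\ref{ass} make this strategy go through with only minor modifications that track the additional deterministic factor $\delta_n$.

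Combining the three pieces, $D/(n^3\|\Sigma\|_F)$ and $W_n/(n^6\|\Sigma\|_F^2)$ converge jointly and continuous mapping applied to $\sup D^2/W_n$ settles case~(3). Case~(1) follows because $M(k^*;1,n)/(n^3\|\Sigma\|_F)\to\infty$, so the ratio evaluated at $k=k^*$ already forces $T_n\to\infty$ in probability; in case~(2) both $M/(n^3\|\Sigma\|_F)$ and $R/(n^3\|\Sigma\|_F)$ vanish on the $D_X$ scale, so the null limit $T$ from Theorem~\ref{cor:T0} is recovered verbatim.
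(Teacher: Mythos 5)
Your decomposition $D = D_X + M + R$, the identification of the deterministic drift with $n^4\Delta(k/n,\ell/n,m/n)\|\delta_n\|^2$, and the continuous-mapping conclusion are exactly the paper's route, and the order bookkeeping for the cross term ($R/(n^3\|\Sigma\|_F)$ of size $n^{1/2}(\delta_n^T\Sigma\delta_n)^{1/2}/\|\Sigma\|_F$, killed by $\delta_n^T\Sigma\delta_n = o(n\|\delta_n\|_2^4)$) matches as well. Two points, however, deserve attention. First, you over-engineer the uniformity of the cross term: because $\eta_i$ is piecewise constant with a single jump, $R(k;\ell,m)$ collapses to deterministic polynomial coefficients in $(k,\ell,m)$ multiplying differences of the one-dimensional partial-sum process $s_n(k)=\sum_{j\le k}X_j^T\delta_n$. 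Kolmogorov's maximal inequality for this scalar i.i.d.\ sum gives $\sup_k|s_n(k)| = O_P(n^{1/2}(\delta_n^T\Sigma\delta_n)^{1/2})$, which is already uniform over all of $(k,\ell,m)$; no new U-statistic-type maximal inequality or cumulant work is needed, and asserting that such a lemma "goes through with minor modifications" is the weakest link in your write-up when a one-line argument suffices.

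Second, your treatment of case (1) is incomplete as stated: showing $M(k^*;1,n)/(n^3\|\Sigma\|_F)\to\infty$ controls only the numerator of the ratio at $k=k^*$, and you must separately argue that the self-normalizer $W_n(k^*;1,n)/(n^6\|\Sigma\|_F^2)$ stays bounded in probability (indeed converges weakly to an a.s.\ finite, positive limit). This is where the specific choice $k=k^*$ earns its keep: $W_n(k^*;1,n)$ is built from $D(t;1,k^*)$ and $D(t;k^*,n)$, and on each of the sub-samples $[1,\floor{b^*n}]$ and $[\floor{b^*n}+1,n]$ the mean is constant, so these statistics coincide with their signal-free versions $D^X$ and the denominator contains no drift at all. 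Without this observation the implication "numerator diverges, hence the ratio diverges" does not follow. Everything else in your proposal is sound.
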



\subsection{Properties of the tests for multiple change-points}

To describe the properties of the test statistics $T_n^*, T_n^\diamond$ under the null, define for $0 \leq r_1 < r_2 \leq 1$ and
$0 \leq s_1 < s_2 \leq 1$,
\begin{align*}
T_1(r_1,r_2) &:= \frac{G(r_1;0,r_2)^2}{\int_{0}^{r_1}G(u;0,r_1)^2du + \int_{r_1}^{r_2}G(u;r_1,r_2)^2du},
\\
T_2(s_1,s_2) &:= \frac{G(s_2;s_1,1)^2}{\int_{s_1}^{s_2}G(u;s_1,s_2)^2du + \int_{s_2}^{1}G(u;s_2,1)^2du}.
\end{align*}

\begin{theorem} \label{th:multcp0} Let Assumption~\ref{ass} hold and assume $\epsilon < 1/4$. If $\mu_t \equiv \mu$ for a vector $\mu \in \R^p$ (i.e. under $H_0$) then
\begin{align*}
T_n^* &\Dkonv T^* := \sup_{(r_1,r_2)\in \Omega(\epsilon)} T_1(r_1,r_2)  + \sup_{(s_1,s_2)\in \Omega(\epsilon)} T_2(s_1,s_2),
\\
T_n^\diamond &\Dkonv T^\diamond := \sup_{(r_1,r_2)\in (\mathcal{G}_\epsilon\times[0,1]) \cap \Omega(\epsilon)} T_1(r_1,r_2)  + \sup_{(s_1,s_2)\in ([0,1]\times\mathcal{G}_\epsilon) \cap \Omega(\epsilon)} T_2(s_1,s_2).
\end{align*}
\end{theorem}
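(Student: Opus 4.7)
The strategy is to reduce the theorem to a continuous mapping argument built on top of the joint process convergence of the U-statistic process $D(k;\ell,m)$ that already underpins Theorem~\ref{cor:T0}. Specifically, I would first establish that the suitably normalized three-parameter array $\|\Sigma\|_F^{-1} n^{-3} D(\floor{rn};\floor{an},\floor{bn})$ converges weakly, as a process indexed by triples $(a,r,b)$ with $0\le a\le r\le b\le 1$, to the centered Gaussian process $G(r;a,b)$ defined in~\eqref{eq:defGrab}. Finite-dimensional convergence follows from the same Hoeffding decomposition / cumulant bookkeeping that delivers Theorem~\ref{cor:T0}, now applied to joint vectors of $D$'s at different triples; the covariance formula~\eqref{eq:covQ} arises as the appropriate limit of the overlap terms. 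Tightness is obtained from Assumption~\ref{cumulant} by the same fourth/sixth-moment increment bounds used in the single-changepoint case, extended to the extra two coordinates $(a,b)$.

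Granted this joint convergence, the rest of the argument is a continuous mapping exercise, and the crucial observation is that the unknown scale $\|\Sigma\|_F^2$ cancels between numerator and denominator in each ratio $D^2/W_n$. Thus, writing $l_1=\floor{r_1 n}$, $l_2=\floor{r_2 n}$, a Riemann-sum argument (replacing the normalized sums in $W_n$ by integrals of the squared limit process) gives
\[
\frac{D(l_1;1,l_2)^2}{W_n(l_1;1,l_2)} \;\Dkonv\; T_1(r_1,r_2),
\]
jointly over $(r_1,r_2)\in\Omega(\epsilon)$, and analogously for the second summand with limit $T_2(s_1,s_2)$. The hypothesis $\epsilon<1/4$ ensures that $r_1\ge\epsilon$ and $r_2-r_1\ge\epsilon$, so the two integrals defining the denominator of $T_1$ are taken over intervals of length bounded below by $\epsilon$; non-degeneracy of $G$ (inherited from~\eqref{eq:covQ}) then guarantees that these integrals are strictly positive almost surely, so the ratio functional is continuous on the support of the limit.

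Finally, I would apply the continuous mapping theorem to the map $\mathrm{sup}$. For $T_n^\diamond$ the suprema are taken over fixed finite grids $\mathcal{G}_{\epsilon,n,f}$ and $\mathcal{G}_{\epsilon,n,b}$ whose underlying continuous indices lie in $(\mathcal{G}_\epsilon\times[0,1])\cap\Omega(\epsilon)$ and $([0,1]\times\mathcal{G}_\epsilon)\cap\Omega(\epsilon)$ respectively, so convergence in distribution is immediate from the joint weak convergence of the finitely many corresponding ratios. For $T_n^*$ the suprema are over the compact sets $\Omega(\epsilon)$; together with the tightness established in the first step, the sup functional is continuous in the uniform topology, and continuous mapping yields $T_n^*\Dkonv T^*$. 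Summing the limits of the two separate supremum terms (which appear additively in the definitions of $T_n^*$ and $T_n^\diamond$) concludes both parts.

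The main obstacle is the extension of the process convergence to the three-parameter index $(a,r,b)$ rather than a single changepoint index $r$. Control of the joint distribution of $D(k;\ell,m)$ at several triples requires tracking how many of the indices $\ell,k,m$ coincide across the two triples, and the resulting cumulant bounds must be summed uniformly; this is more intricate than the two-parameter case behind Theorem~\ref{cor:T0} but is handled by the same cumulant machinery from Assumption~\ref{cumulant}. Verifying tightness uniformly over the three-dimensional grid, and in particular showing that short intervals $(a,b)$ do not cause the normalized $D$ to blow up, is where most of the technical work lies.
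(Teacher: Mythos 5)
Your overall architecture — weak convergence of the normalized three\-/parameter process $(a,r,b)\mapsto n^{-3}\|\Sigma\|_F^{-1}D(\floor{rn};\floor{an},\floor{bn})$ in $\ell^\infty([0,1]^3)$, followed by a (continuous) mapping argument for the ratio-and-supremum functionals, with a.s.\ positivity of the limiting denominators over the compact index set $\Omega(\epsilon)$ — is exactly the paper's proof, which defines the maps $\Phi_n^*,\Phi^*$ and invokes the extended continuous mapping theorem. The one place you take a genuinely different (and harder) route is the process convergence itself: you propose to prove it directly via Hoeffding decomposition and cumulant increment bounds in all three coordinates, and you correctly identify this as where most of the work would lie. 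The paper avoids that entirely: the key technical result (Theorem~\ref{weakS}) is weak convergence of the \emph{two}-parameter partial-sum process $\widetilde S_n(a,b)$, and $D^X(k;\ell,m)$ is then an explicit linear combination of $\widetilde S_n$ evaluated at $(\ell,k)$, $(k+1,m)$ and $(\ell,m)$, so the three-parameter convergence \eqref{eq:weakHn} falls out algebraically, with tightness inherited from the equicontinuity of $\widetilde S_n$. Your route would presumably work but duplicates effort; the reduction to $\widetilde S_n$ is what makes the three-parameter statement cheap.

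One concrete inaccuracy: for $T_n^\diamond$ you claim the supremum runs over a \emph{finite} grid so that finite-dimensional convergence suffices. It does not: in $\mathcal{G}_{\epsilon,n,f}$ only the second coordinate is restricted to the grid $\mathcal{G}_\epsilon$, while the first still ranges over $O(n)$ values (and symmetrically for $\mathcal{G}_{\epsilon,n,b}$), so the index set grows with $n$ and you still need the full process-level convergence plus the same continuous-mapping argument, merely restricted to the closed subset $(\mathcal{G}_\epsilon\times[0,1])\cap\Omega(\epsilon)$. This does not break your proof — the process convergence you establish for $T_n^*$ covers it — but the stated justification for the discretized statistic is wrong as written. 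A small additional quibble: the condition $\epsilon<1/4$ is there to keep $\Omega(\epsilon)$ (and the grid intersections) nonempty; the lower bounds $r_1\geq\epsilon$ and $r_2-r_1\geq\epsilon$ come from the definition of $\Omega(\epsilon)$ itself, not from $\epsilon<1/4$.
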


The distributions of $T^*, T^\diamond$ are again pivotal but depend on $\epsilon$ (which is known since it is chosen by the user).
For $\epsilon=0.1$ used in the paper, the critical values of $T^\diamond$ are tabulated in Table~\ref{tab:Tndiamondcrit} below.

\begin{table}[h!]
\centering
\begin{tabular}{cccccc}
\hline
$\gamma$&80\%&90\%&95\%&99\%&99.5\%
\\
\hline
$Q_{T^{\diamond}}(\gamma)$&7226.18	&8762.45&	10410.19& 	14603.51& 	16608.86
\\
\hline
\end{tabular}
\caption{Simulated quantiles of the limit $T^{\diamond}$} \label{tab:Tndiamondcrit}
\end{table}

 To describe the properties of the tests based on $T_n^*,T_n^\diamond$ under the alternative (where we could have several change-points), assume that for some $\epsilon < b_1^* < b_2^* < ... < b_M^* < 1-\epsilon$ we have
\[
\mu_t = \mu_k^* \quad \floor{nb_k^*} + 1 \leq t \leq \floor{nb_{k+1}^*}, \quad k=0,...,M
\]
where we defined $b_0^* = 0, b_{M+1}^* = 1$ and $\mu_0 \neq \mu_1 \neq ... \neq \mu_M$ denote vectors in $\R^p$.

\begin{theorem} \label{th:multcp1}
Let Assumption~\ref{ass} hold and assume $\epsilon < 1/4$. Additionally, assume that in the setting given above we have $\inf_k |b_k^* - b_{k+1}^*| \geq \epsilon, \sup_k \sqrt{n}\|\mu_k^* - \mu_{k+1}^*\|_2/\|\Sigma\|_F^{1/2} \to \infty$. Then $T_n^* \to \infty$ in probability and $T_n^\diamond \to \infty$ in probability. 
\end{theorem}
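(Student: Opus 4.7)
The plan is to exploit the fact that $T_n^*$ and $T_n^\diamond$ are each the sum of two maxima of nonnegative ratios, so exhibiting a single index pair at which one of the two ratios diverges in probability is enough.

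\textbf{Choice of indices.} For $T_n^*$ the pair $(l_1,l_2)$ ranges freely over $\Omega_n(\epsilon)$; I would take $l_1=\floor{nb_1^*}$ together with $l_2=\floor{nb_2^*}$ if $M\geq 2$ and $l_2=\floor{n(1-\epsilon)}$ if $M=1$. The conditions $\inf_k|b_k^*-b_{k+1}^*|\ge\epsilon$, $b_1^*>\epsilon$, $b_M^*<1-\epsilon$, and $\epsilon<1/4$ together guarantee $(l_1,l_2)\in\Omega_n(\epsilon)$ for $n$ large; in the edge case $M=1$ with $b_1^*$ close to $1-\epsilon$ I would instead use the second maximum with $(m_1,m_2)=(\floor{n\epsilon},\floor{nb_1^*})$, which is in $\Omega_n(\epsilon)$ because $\epsilon<1/4$. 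By this construction the mean is constant on $[1,l_1]$ (equal to $\mu_0$) and on $[l_1+1,l_2]$ (equal to $\mu_1$), with $\mu_0\neq\mu_1$. For $T_n^\diamond$ the same scheme applies, with the extra requirement that $l_2$ lie on the grid $\mathcal{G}_\epsilon$ of spacing $\epsilon/2$; since $\epsilon,2\epsilon,3\epsilon\in\mathcal{G}_\epsilon$ and $\epsilon<1/4$, a short case analysis on the location of $b_1^*$ will yield a grid-aligned pair while still preserving the constant-mean property on both pieces.

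\textbf{Numerator lower bound.} Under constant means on both pieces a direct expansion yields
\[
\E[D(l_1;1,l_2)] = l_1(l_1-1)(l_2-l_1)(l_2-l_1-1)\|\mu_0-\mu_1\|^2 \asymp n^4\|\mu_0-\mu_1\|^2.
\]
The variance of $D(l_1;1,l_2)$ splits into a pure-noise part, of order $n^6\|\Sigma\|_F^2$ by the same bounds that drive the null analysis of Theorem~\ref{cor:T0} (applying Assumption~\ref{cumulant} with $h=2,4$), and a mean-noise cross part bounded by $O(n^6(\mu_0-\mu_1)^T\Sigma(\mu_0-\mu_1))=O(n^6\|\Sigma\|_F\|\mu_0-\mu_1\|^2)$. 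The hypothesis $\sqrt{n}\|\mu_0-\mu_1\|/\|\Sigma\|_F^{1/2}\to\infty$ makes $\E[D(l_1;1,l_2)]^2\asymp n^8\|\mu_0-\mu_1\|^4$ dominate both variance components, so Chebyshev's inequality will give $D(l_1;1,l_2)^2\asymp n^8\|\mu_0-\mu_1\|^4$ in probability.

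\textbf{Denominator upper bound.} The denominator $W_n(l_1;1,l_2)$ only involves $D(t;1,l_1)$ and $D(t;l_1,l_2)$, each computed on a subsample whose mean is constant. Their joint distribution therefore coincides with the null-case distribution on the corresponding subsample, and the process-convergence machinery used in the proof of Theorem~\ref{cor:T0} will deliver $W_n(l_1;1,l_2) = O_P(n^6\|\Sigma\|_F^2)$. Combining,
\[
\frac{D(l_1;1,l_2)^2}{W_n(l_1;1,l_2)} \asymp \frac{n^2\|\mu_0-\mu_1\|^4}{\|\Sigma\|_F^2}\to\infty,
\]
and since $T_n^*$ and $T_n^\diamond$ are each at least this single ratio, both diverge in probability.

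\textbf{Main obstacle.} The conceptually hardest piece will be the grid-alignment argument for $T_n^\diamond$: the grid $\mathcal{G}_\epsilon$ has spacing $\epsilon/2$ and in pathological configurations the natural window $(l_1,l_2)$ can miss it entirely. The condition $\epsilon<1/4$ is precisely what keeps enough flexibility to always recover a valid grid-aligned pair, by a short case split on the position of $b_1^*$ and, if necessary, switching between the two maxima and exploiting that $\epsilon,2\epsilon,3\epsilon\in\mathcal{G}_\epsilon$. The variance accounting and the null-type bound on the denominator are routine given the machinery already established for Theorem~\ref{cor:T0}.
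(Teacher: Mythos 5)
Your strategy coincides with the paper's: lower-bound the statistic by a single ratio at a pair straddling one change point with constant means on both sub-pieces, show that the drift $l_1(l_1-1)(l_2-l_1)(l_2-l_1-1)\|\mu_0-\mu_1\|_2^2\asymp n^4\|\mu_0-\mu_1\|_2^2$ of the numerator dominates its fluctuations, and bound the denominator by its null order $O_P(n^6\|\Sigma\|_F^2)$ via the process convergence underlying Theorem~\ref{cor:T0}. Your treatment of $T_n^*$ is fine and in fact more careful than the paper's about the case $M=1$ (this is where $\epsilon<1/4$ earns its keep: one of $b_1^*\le 1-2\epsilon$ or $b_1^*\ge 2\epsilon$ must hold, so one of the two scans is always available). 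One arithmetic slip: with $\delta=\mu_0-\mu_1$ the mean--noise cross term in $D(l_1;1,l_2)$ is an $O(n^3)$ prefactor times a centered sum of $O(n)$ i.i.d.\ terms, hence $O_P(n^{7/2}(\delta^T\Sigma\delta)^{1/2})$, so its variance contribution is $O(n^7\,\delta^T\Sigma\delta)$ rather than $O(n^6\,\delta^T\Sigma\delta)$; this is harmless since $n^7\,\delta^T\Sigma\delta\le n^8\|\delta\|_2^4\cdot\|\Sigma\|_F/(n\|\delta\|_2^2)=o(n^8\|\delta\|_2^4)$ under the stated hypothesis.

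The genuine gap is the grid-alignment step for $T_n^\diamond$, which you defer and which cannot always be completed as sketched. For a forward-scan term to diverge you need the mean constant on $[1,l_1]$ and on $[l_1+1,l_2]$ with different values, which pins $l_1=\floor{nb_1^*}$ and $l_2\le\floor{nb_2^*}$, while membership in $\mathcal{G}_{\epsilon,n,f}$ additionally requires $l_2/n\in\mathcal{G}_\epsilon$ and $l_2/n-l_1/n\ge\epsilon$; so for $M\ge2$ you need a grid point in $[b_1^*+\epsilon,\,b_2^*]$. Symmetrically the backward scan needs a grid point in $(b_{M-1}^*,\,b_M^*-\epsilon]$. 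When $b_2^*-b_1^*=b_M^*-b_{M-1}^*=\epsilon$ exactly and the change points are off the grid, both intervals degenerate to a single non-grid point, no admissible pair with constant means on both sub-pieces exists, and every term of $T_n^\diamond$ then stays $O_P(1)$; no case split on $b_1^*$ rescues this. (For $M=1$ your sketch does work: if $b_1^*\ge2\epsilon$ use the backward scan with $m_1/n=\epsilon$, otherwise the forward scan with $l_2/n=3\epsilon\le1-\epsilon$.) You have correctly located the weak point, and for what it is worth the paper shares it: its proof selects the grid point $g=(\ceil{2b_1^*/\epsilon}+1)\epsilon/2\in[b_1^*+\epsilon/2,\,b_1^*+\epsilon]$, for which $g-b_1^*$ may be only $\epsilon/2<\epsilon$, so the chosen pair need not lie in $\mathcal{G}_{\epsilon,n,f}$ either. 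Closing this step honestly requires either a separation between consecutive change points slightly larger than $\epsilon$ (e.g.\ $3\epsilon/2$) or a modification of the grid.
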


\subsection{Application to testing for changes in the covariance structure}

In this subsection, we shall focus on testing for a change in the covariance matrix, which is an important problem in the analysis of multivariate data, and has applications in many areas, such as economics and finance.  \cite{aue2009break} proposed a CUSUM-based test in the low dimensional time series setting and documented the early literature, which is mostly focused on the low dimension high sample size setting. In the high dimensional environment, the only work we are aware of is \cite{avanesov2016change}, which will be introduced and compared in our simulation studies; see Section~\ref{sec:covsim} of  the supplement. Following the latter paper, we assume $\mu_{t,n} = 0, t=1,...,n$. Define $Z_0 = vech(X_0X_0^T)$ \change{as the half-vectorization of $X_0X_0^T$, i.e. the vectorization of the lower triangular part \csv{(including the diagonal)} of $X_0X_0^T$.} If $\E[X_0] = 0$ then $\E(Z_0) = vech(\Sigma_X)$. Tests for changes in $\Sigma_X$ can thus be constructed by applying the test statistics from the previous sections to the transformed observations $Z_t := vech(X_tX_t^T), t=1,...,n$. In what follows we provide a result that allows to verify Assumption~\ref{ass} for $Z_0$ from properties of $X_0$.

\begin{proposition} \label{prop:covm}
The vector $Z_0 := vech(X_0X_0^T)$ satisfies Assumption~\ref{ass} provided that the following conditions hold for $X_{0}$ with $\E[X_0] = 0$ and $\Sigma_n := \E[X_0X_0^T].$
{\begin{enumerate}[label=B.\arabic*]
\item  $\|\Sigma_n\|_1 = o(\|\Sigma_n\|_F)$\label{cov:ubound}.
\item $ \max_{l_1,l_2=1,...,p}\sum_{l_3,l_4 = 1}^p|cum(X_{0,l_1},X_{0,l_2},X_{0,l_3},X_{0,l_4})|=o(\|\Sigma_n\|_F^2)$. \label{cov:cum1}
\item \label{cov:cum2} There exists a constant $C$ such that \change{$\sum_{l_1,...,l_h = 1}^{p}cum^2(X_{0,l_1},...,X_{0,l_h}) \leq C\|\Sigma_n\|_F^h$}
for  \change{$h = 2,...,12$}. Moreover
\change{\[
\sum_{l_1,...,l_4 = 1}^{p}cum^2(X_{0,l_1},...,X_{0,l_4}) = o(\|\Sigma_n\|_F^4).
\]}
\end{enumerate}}
\end{proposition}

\begin{remark}[Discussion of Assumptions] \label{rem:condB} {\rm Similar to Remark \ref{rem:condA}, Assumptions \ref{cov:ubound} - \ref{cov:cum2} can be verified by considering the following conditions: (1) $p_n \to \infty$; (2) there exists $c_0 > 0$ independent of $n$ such that $\inf_{i=1,...,p_n} Var(X_{0,i}) \geq c_0$; (3) there exist $c_1 >0$ such that $Var(X_{0,i}X_{0,j}) \geq c_1 > 0$, $\forall 1 \leq i \leq j \leq p$; (4) for $h=2,...,12$ there exist constants $C_h$ depending on $h$ only and a constant $r > 2$ independent of $n,h,m_1,...,m_h$ such that
\[
|cum(X_{0,m_1,n},...,X_{0,m_h,n})| \leq C_h (1 \vee \max_{1 \leq i,j\leq h} |m_i - m_j|)^{-r}.
\]
This can be easily satisfied if the entries of \change{$(X_{0,1,n},...,X_{0,p_n,n})$} are m-dependent and moments of order $12$ are uniformly bounded or under suitable conditions on short-range dependence; see Remark~\ref{rem:condA} for additional details. A proof of this statement is given in Section~\ref{sec:proofA1A3}.}
\end{remark}

\begin{remark}{\rm 
	As pointed out by a referee, we vectorize the covariance matrix and apply the mean change point test, which may not be efficient, since we ignore certain structures of covariance matrices such as symmetricity and positive definiteness. In the two sample testing context, \cite{li12} proposed a novel test for the equality of two high-dimensional covariance matrices by using U-statistic for the scalar parameter $tr\{(\Sigma_1-\Sigma_2)^2\}$, where $\Sigma_j$ denotes the covariance matrix for the $j$th population, $j=1,2$. The test by \cite{li12} can be naturally viewed as an extension of \cite{chen2010two} from the mean testing to covariance matrix testing.  Given this connection, it is indeed possible to build on \cite{li12} to propose a SN-based test for a change-point in covariance matrix, following the developments presented in Section~\ref{sec:single}. However, the associated theory seems fairly complex and we shall leave it for future investigation. }   
\end{remark}

\section{Test statistics for high-dimensional time series}
\label{sec:HDLP}

In this section, we assume that $\{ Y_{t} \}_{t=1}^{n}$ is a realization of $\mathbb{R}^{p}$-valued time series with weak temporal dependence. To extend the U-statistic based approach from high-dimensional independent data to weakly dependent high-dimensional time series, we formulate a trimmed version of the $U$-statistic that excludes pairs of points that are close on time scale.
Trimming is crucial in the high-dimensional context to remove the bias caused by weak temporal dependence and is common for the use of U-statistic in the time series setting. It is also routinely applied in fixed dimensions; see \cite{lee1990}. To confirm the need for trimming, we implemented the untrimmed test statistic $T_n$ for the VAR$(1)$ model in Example~\ref{exp:1} for both $n=p=100$ and $n=p=200$ with $\rho=-0.5, 0.5, 0.7$, and the empirical sizes are uniformly zero for all cases (results based on 2000 replications). This is due to the fact that the temporal dependence incurs a non-negligible bias for the denominator $D(k;1,n)$ (and more generally $D(k;l,m)$)
as under the null and for stationary time series, $E\{D(k;l,m)\}$ is a linear combination of the auto-covariance based terms $E\{(Y_0-\mu)^T(Y_h-\mu)\}$, \change{$h=1,2,...$}, which vanish under the i.i.d. assumption. As an alternative approach, \cite{li2019change} proposed to estimate the bias explicitly, and we shall compare the two approaches in terms of estimation accuracy in Section~\ref{sub:estimateTS} of the supplement.

 Motivated by the discussion above, we modify the statistic $D$ in equation \eqref{eq:defDn} by removing all terms of the form $Y_i^TY_j$ for which $|i-j| \leq \tau$. This considerably reduces the bias which is introduced by weak temporal dependence of the $Y_i$. The resulting trimmed statistic is of the form
 \begin{align*}
 D(k;l,m|\tau)  = & \sum_{\stackrel{l \leq j_1 , j_3 \leq k}{|j_1-j_3|>\tau}}\sum_{\stackrel{k+\tau+1 \leq j_2, j_4 \leq m}{|j_2-j_4|>\tau}} (Y_{j_1} - Y_{j_2})^T (Y_{j_3} - Y_{j_4}),
 \end{align*}
 where $\tau $ is a given positive integer such that $l + \tau +1 \leq k  \leq m - 2\tau - 2$. It is clear that when $\tau =0$, $D(k;l,m| 0) = D(k;l,m)$, where $D(k;l, m)$ is defined in Equation \eqref{eq:defDn}. Furthermore, we let 
 \begin{align*}
 W_{n}(k;l,m|\tau) :=\frac{1}{n} \sum\limits_{t=l+\tau+1}^{k-2\tau-2} D^2(t;l,k|\tau) + \frac{1}{n} \sum\limits_{t=k+\tau+2}^{m-2\tau-2} D^2(t;k+1,m|\tau),
 \end{align*} 
 where $ l+\tau +1 \leq k - 2\tau -2 $ and $ k+\tau +2 \leq m-2\tau -2 $. The self-normalized statistic is then defined as \change{
 \begin{align*}
 T_n := \sup_{k=3\tau+4,..., n-3\tau-4} \frac{D^2(k;1,n|\tau)}{ W_{n}(k;1,n|\tau) }.
 \end{align*}}
 
In the theoretical developments that follow, we assume $\tau = \lfloor \eta n \rfloor, \eta \in (0,1)$ and fix $\eta$ in our asymptotic framework, in other words we consider fixed-$\eta$ asymptotics [this type of approach is termed fixed-b asymptotics in \cite{kiefer2005new}. This is motivated by preliminary simulations, where we found that the limiting null distribution derived under the small-$\eta$ asymptotics (i.e., $\eta\rightarrow 0$ as $n\rightarrow\infty$) provides a poor approximation to the finite sample distribution under the null especially when $\eta$ is not very small, which is required when the temporal dependence is moderate or strong. Explicitly taking into account the effect of trimming through fixed-$\eta$ asymptotics results in a much more accurate size as seen in our simulations.
Note that fixed-$b$ asymptotics and self-normalization are quite related in many ways and for some problems, self-normalization is a special case of fixed-$b$ asymptotics; see \cite{shao2010self} and \cite{shao2015self} for more discussions about the connection and difference.

Compared to the analysis in Section~\ref{sec:theory}, the present setting involves two major challenges. First, adopting the fixed-$\eta$ framework results in a more complex statistic and the  simple representation of the process $D$ without trimming (see equation~\eqref{repr:DXtildeS}) does not hold anymore. A somewhat more involved representation needs to be derived instead; see the first two pages in Section~\ref{sec:pfThmDep} and in particular equation~\eqref{repr:DXdep} therein). Second, each of the four U-processes in the new decomposition is now based on dependent rather than independent data and involves additional weighting. This considerably complicates their asymptotic analysis.

To overcome the technical difficulties described above, we will limit our attention to linear processes. In particular, we assume $Y_{t} = \mu_{t} + X_{t} $, \change{$t=1,...,n$}, where $X_t = \sum_{j =0}^{\infty} c_{j} \epsilon_{t-j}$ and $\{\epsilon_t\}$ are i.i.d $p$-dimensional innovations with mean $0$ and $c_{j}$ are $p \times p$ coefficient matrices. Let 
$$ 
\Gamma = \left(\sum_{u = 0}^{\infty} c_{u}\right) cov(\epsilon_0) \left(\sum_{u = 0}^{\infty} c_{u}\right)^T $$ 
be the corresponding long run variance matrix. The linear processes framework is quite general and it includes the well-known ARMA models. From a technical point of view, we are able to take advantage of the Beveridge-Nelson (BN) decomposition [\cite{phillips1992asymptotics}], which can be shown to  work in the high-dimensional setting.    
 
The following assumptions are imposed to study the asymptotic distribution of $ T_n$. 
 
 \begin{assumption} \label{ass:main} Suppose the following assumptions hold. 
 	\begin{enumerate}[label=C.\arabic*]
 		\item \label{A41A1} \change{$\sup_{l=1,..., p} \| \epsilon_{0,l} \|_8 < \infty $.}
 		\item \label{A41A2} For any $m \geq 0$. 
 		\begin{align*}
 		\sum_{u = m}^{\infty} \| c_u \|_{1} \leq C \rho^{m} \text{ and }
 		\sum_{u = m}^{\infty} \| c_u \|_{\infty} \leq C \rho^{m},
 		\end{align*}
 		where $C > 0$ and $ 0 < \rho < 1 $ are some constants.
 		\item \label{A41A3}$ tr(\Gamma^4) = o(\| \Gamma \|_F^4) $.
 		\item \label{A41A4} $p^{6}  \rho^{\lfloor \eta n \rfloor} / \|\Gamma \|_F^{6} = O(1)$.
 		\item \label{A41A5}For any $h = 2,3,4,5,6$,\change{
 		$
 		\sum_{k_1,...,k_h = 1}^p |cum(\epsilon_{0, k_1}, \cdots, \epsilon_{0, k_h})| \leq C'  \| \Gamma \|_F^h,
 		$ }		where $C'$ is some constant independent of $n,p$. \label{ass:a5}
 	\end{enumerate}
 \end{assumption}

\begin{remark} \rm
Assumptions \ref{A41A1} and \ref{A41A2} imply the Uniform Geometric Moment Contraction (UGMC($8$)) property in \cite{wang2019hypothesis}. The UGMC condition is a generalization of Geometric Moment Contraction  in \cite{hsing2004weighted} and \cite{wu2004limit} to the high-dimensional setting and its equivalent form has been used in \cite{zhang2018gaussian}. Assumption \ref{A41A3} is commonly assumed for covariance matrix [e.g., \cite{chen2010two}] and it can be satisfied under some weak cross-sectional and temporal  dependence conditions.  Assumption \ref{A41A4} implies that the bias caused by temporal dependence is asymptotically negligible. Assumption \ref{A41A5} holds under mild conditions, see Section 3 in \cite{wang2019hypothesis} for some verified examples. 
\end{remark}

\begin{remark} \rm
Recently, \cite{wang2019hypothesis} proposed a new way of doing self-normalization for inference of high-dimensional time series. They dealt with one sample testing problem, and also used the trimming technique in their U-statistic. Their asymptotic theory was developed for a broad class of nonlinear causal processes using martingale approximation. To develop our asymptotic theory for nonlinear processes  would be desirable but seems very challenging as we are dealing with a two-sample testing problem with unknown break date, and the process convergence theory we develop seems considerably more involved. 
\end{remark}

 Now  we are ready to state the asymptotic null distribution of $T_n$.
 \begin{theorem} \label{thm:main} Suppose Assumption \ref{ass:main} is true. Then,
 	\begin{align*}
 	T_n  \overset{\mathcal{D}}{\longrightarrow} T(\eta):= \sup_{r \in (3 \eta, 1 - 3\eta)} \frac{G^2(r;0,1|\eta)}{ \int_{\eta}^{r-2\eta} G^2(u;0,r|\eta) du + \int_{r+\eta}^{1-2\eta} G^2(u;r,1|\eta) du},
 	\end{align*}
 	where 
 	\begin{align*}
 	G(r;a,b|\eta) := &(b-r-2\eta)^2 V_{1}(a,r|\eta) + (r-a-\eta)^2 V_{1}(r+\eta,b|\eta) \\
 	& - (r-\eta)(b-\eta)U_1(a,r-\eta;r+\eta,b-\eta)  + (r-\eta)(r+2\eta)U_1(a, r-\eta; r+2\eta, b) \\ 
 	& + (a+\eta)(b-\eta)U_1 (a+\eta, r; r+\eta, b-\eta) - (a+\eta)(r+2\eta)U_1(a+\eta, r;r+2\eta,b) \\
 	& -  (b-\eta) U_2(a, r-\eta; r+\eta,b-\eta)  + (r+2\eta) U_2(a, r-\eta; r+2\eta,b) \\
 	& + (b-\eta) U_2(a+\eta, r; r+\eta,b-\eta)  - (r+2\eta) U_2(a+\eta, r; r+2\eta,b) \\
 	& - (r- \eta)U_3(a, r-\eta;r+\eta,b-\eta)  + (r- \eta)U_3(a, r-\eta;r+2\eta,b) \\
 	& + (a + \eta) U_3(a+\eta, r; r+\eta,b-\eta)  - (a + \eta) U_3(a+\eta, r; r+2\eta,b) \\
 	& + U_4(a, r-\eta;r+\eta,b-\eta)  - U_4(a, r-\eta;r+2\eta,b) \\
 	& - U_4(a+\eta, r;r+\eta,b-\eta)  + U_4(a+\eta, r;r+2\eta,b).
 	\end{align*}
 	For $u,v=1,2,3,4,$ $$ U_u (a_1, a_2;b_1, b_2) = V_u(a_1, b_2|\eta) - V_u(a_1, b_1|\eta) - V_u(a_2, b_2|\eta) + V_u(a_2, b_1|\eta),$$ 
 	and $V_1, V_2, V_3, V_4$ are Gaussian processes with covariance structures 
 	$$
 	cov(V_u(a_1,b_1| \eta), V_v(a_2, b_2|\eta)) = C_{u,v}(a_1 \vee a_2, b_1 \wedge b_2) \1_{ \{ b_1 \wedge b_2- a_1 \vee a_2 - \eta >0 \} },
 	$$
 	where $C_{u,v}(a,b)$ is defined as
 	\begin{align*}
 	C_{u,v}(a,b) = \lim\limits_{n \rightarrow \infty} \frac{2}{n^2 }  \sum_{i = \lfloor an \rfloor}^{ \lfloor b n \rfloor - \lfloor \eta n \rfloor -1} \sum_{j = \lfloor an \rfloor}^{ i }   w_{i,j}^u w_{i,j}^v,
 	\end{align*}
 	with $
 	w_{i,j}^u = \1_{\{ u=1 \} } +\frac{j}{n}  \1_{\{ u=2 \} } + \frac{ i+\lfloor \eta n \rfloor +1 }{n} \1_{\{ u=3 \} } + \frac{ i+\lfloor \eta n \rfloor +1 }{n} \frac{j}{n} \1_{\{ u=4 \}}$.
 	
 \end{theorem}

The limiting distribution $T(\eta)$ derived above is considerably more complicated than in the independent case but still pivotal for given $\eta$. This is because the cross-covariance of the centered processes $V_1,...,V_4$ depends only on $\eta$ and not on any unknown quantities. In other words, our test involves only one trimming parameter, whose impact is captured to the first order by the limiting null distribution. Simulated quantiles of $T(\eta)$ are tabulated in Table \ref{t1}.




 
 \begin{table}[t]
 	\begin{center}
 		\scalebox{1}{
 			\begin{tabular}{@{\extracolsep{5pt}} cccccc}
 				\\[-1.8ex]\hline \hline \\[-1.8ex]
 				$\eta$ & $\alpha=0.2$ & $\alpha=0.1$ & $\alpha=0.05$ & $ \alpha = 0.01 $ & $ \alpha=0.005 $    \\
 				\hline \\[-1.8ex] 
 				$0.01$ & $795.017$ & $1198.187$ &$1639.631$ &$2758.508$ & $3561.943$ \\
 				$0.02$ & $1069.060$ & $1635.213$  &$2203.461$  &$3788.201$  &$4601.053$ \\
 				$0.03$ & $1465.636$ & $2268.597$  &$3137.729$  &$5571.669$  &$6599.765$ \\
 				$0.04$ & $2058.443$ & $3142.107$  &$4441.035$  &$8294.564$  &$10027.183$ \\
 				$0.05$ & $2969.278$ & $4541.604$  &$6396.979$  &$12103.096$ &$15131.248$ \\
 				$0.06$ & $4471.923$ & $6915.262$  &$9934.987$  &$18447.155$ &$22879.406$ \\
 				$0.07$ & $6640.513$ & $10263.074$ &$14819.331$ &$26974.749$ &$32808.194$ \\
 				$0.08$ & $11555.69$ & $18099.74$  &$25834.53$  &$45742.71$  &$55384.87$ \\
 				$0.09$ & $20332.71$ & $32633.35$  &$46290.22$  &$84578.12$  &$106325.07$ \\
 				$0.10$ & $37737.27$ & $59394.68$  &$84389.98$  &$152412.61$ & $194372.67$ \\
 				\hline \\ [-1.8ex] 		
 		\end{tabular} }
 	\end{center}
 	\caption{Simulated $100(1-\alpha)\%$ quantiles of $T(\eta)$}
 	\label{t1}
 \end{table}

 \begin{remark} \rm
 The main reason for the rather involved structure of $T(\eta)$ above is the  effect of the trimming parameter $\eta$. Indeed, if $\eta = 0$,
 \begin{align*}
 G(r;a,b|0) & = (b-r)^2 V_{1}(a,r|0) +(r-a)^2 V_{1}(r,b|0) - (b-r)(r-a)U_1(a,r;r, b) \\
 & = (b-r)^2 V_{1}(a,r|0) +(r-a)^2 V_{1}(r,b|0) \\
 & \hspace{3cm} - (b-r)(r-a)\{ V_{1}(a,b|0) - V_{1}(a,r|0) - V_{1}(r,b|0) \},
 \end{align*}
 which is identical to $G(r;a,b)$ in Theorem~\ref{cor:T0}.
 \end{remark}

  Next we present the asymptotic distribution under some local alternatives.
\begin{theorem}
\label{thm:alt} Suppose Assumption \ref{ass:main} holds. 
Assume that there exits $\phi \in (3\eta,1-3\eta) $ such that $ \mu_t = \mu^{*} $ for \change{$ t = 1,2,...,\lfloor \phi n \rfloor$} and $ \mu_t = \mu^{*} + \delta_n $ for \change{$ t = \lfloor \phi n \rfloor + 1,...,n$.} Then, 
\begin{itemize}
\item[1,] If $ n^{1/2} \| \delta_n \|_2 / \| \Gamma \|_F^{1/2} \rightarrow \infty $, then $T_n \rightarrow \infty$ in probability.
\item[2,] If $ n^{1/2} \| \delta_n \|_2 / \| \Gamma \|_F^{1/2} \rightarrow 0 $, then $T_n \rightarrow T$.
\item[3,]  If $ n^{1/2} \| \delta_n \|_2 / \| \Gamma \|_F^{1/2} \rightarrow c \in (0, \infty)$, then 
\begin{align*}
T_n  \overset{\mathcal{D}}{\longrightarrow} \sup_{r \in (3\eta, 1-3\eta)} \frac{ \widetilde{G}^2(r;0,1|\eta, \phi) }{ \int_{\eta}^{r-2\eta}  \widetilde{G}^2(u;0,r|\eta, \phi) du + \int_{r+\eta}^{1-2\eta} \widetilde{G}^2(u;r,1|\eta, \phi)  du},
\end{align*}
where 	$
\widetilde{G}(r;a,b|\eta, \phi) := \sqrt{2} G(r;a,b|\eta) + c \Delta (r;a,b|\eta, \phi),$ and $\Delta (r;a,b|\eta, \phi)$ is defined similarly to $G(r;a,b|\eta)$ but with $\triangledown_{u}(\cdot,\cdot|\eta, \phi)$, $\square_u (\cdot,\cdot; \cdot,\cdot)$ replacing all instances of $V_u(\cdot,\cdot|\eta), U_u(\cdot,\cdot;\cdot,\cdot)$ where we defined
\begin{multline*}
\square_u (a_1, a_2;b_1, b_2) =  \triangledown_u(a_1, b_2|\eta, \phi)  - \triangledown_u(a_1, b_1|\eta, \phi) - \triangledown_u(a_2, b_2|\eta, \phi) + \triangledown_u(a_2, b_1|\eta, \phi),
\end{multline*}
and 
\begin{align*}
\triangledown_u(a, b|\eta, \phi) = \left\lbrace  \begin{array}{ll}
\lim\limits_{n \rightarrow \infty}  \frac{2}{n^2} \sum_{i = \lfloor (\phi \vee a) n \rfloor}^{ \lfloor b n \rfloor - \lfloor \eta n \rfloor -1} \sum_{j = \lfloor (\phi \vee a) n \rfloor}^{ i }   w_{i,j}^u , & \text{ if }  \phi < b - \eta, \\
0, & \text{ otherwise }.
\end{array} \right. 
\end{align*}
\end{itemize}
\end{theorem}

 \begin{remark} \rm
 	If $\eta = 0$, we have
 	\begin{align*}
 	\Delta (r;a,b|0, \phi) &=  (b-r)^2 \triangledown_{1}(a,r|0, \phi) + (r-a)^2 \triangledown_{1}(r,b|0, \phi)  - (b-r)(r-a) \square_1(a,r;r,b) \\
 	& =  (b-r)^2 \triangledown_{1}(a,r|0, \phi) + (r-a)^2 \triangledown_{1}(r,b|0, \phi) \\
 	& \hspace{1cm} - (b-r)(r-a)\{ \triangledown_{1}(a,b|0, \phi) - \triangledown_{1}(a,r|0, \phi) - \triangledown_{1}(r,b|0, \phi) \}.
 	\end{align*}
 	It can be easily seen that $\triangledown_{1}(a,b|0, \phi) = (b-(\phi \vee a))^2 \1_{\{ \phi < b  \} }$. Then, some algebra show that
 	\begin{align*}
 	\Delta (r;a,b|0, \phi) = \left\lbrace  \begin{array}{ll}
 	(\phi-a)^2(b-r)^2, & \text{if } a < \phi \leq r, \\
 	(r-a)^2 (b-\phi)^2, & \text{if } r < \phi < b, \\
 	0, & \phi < a \text{ or } \phi >b.
 	\end{array} \right. 
 	\end{align*}
 	Thus, we have that  $ \Delta (r;a,b|0, \phi)$ is equal to $ \Delta(r,a,b) $ with $b^*=\phi$ defined in Theorem~\ref{cor:T_alt}.
 \end{remark}

\begin{remark} \rm
	It is quite straightforward to mimic the test we develop for the unsupervised case in the setting of high-dimensional independent data, and develop an SN-based test for multiple change points alternative in the high-dimensional time series setting. Details are omitted for the sake of brevity.
\end{remark}

\section{Wild binary segmentation and multiple change-point estimation}
\label{sec:estimation}

In practice, an important problem is to estimate the number and location of change points. A classical testing-based method is binary segmentation: run a test over the full sample, and if the test rejects the null, then split the sample into two segments (with the location of first change point estimated by the $k$ where the maximum is achieved in the test statistic), and then continue to test for change points for each segment. The algorithm stops when there is no rejection for each segment. A problem with binary segmentation is that it does not work well when there are multiple change points with changes exhibiting a non-monotonic pattern; see our simulation results. To overcome this drawback, {\cite{fryzlewicz2014wild}} proposed a new approach called Wild Binary Segmentation (WBS, hereafter). The main idea of WBS is to calculate the CUSUM statistic for many random sub-intervals to allow at least one of them to be localized around a change point (with high probability), so this change point can be identified. It  overcomes
the weakness of binary segmentation, where the  CUSUM statistic computed on the full sample is unsuitable for certain configurations
of multiple change-points. It seems natural to combine the WBS with our SN-based test statistic and see whether we can estimate the number and location of change points accurately.

\bigskip

We begin by introducing some additional notation. For arbitrary integers $4 \leq s + 3 \leq  e - 4 \leq n - 4 $ define
\[
Q(s,e) := \max_{b=s+3,...,e-4} \frac{D(b;s,e)^2}{V(b;s,e)},
\]
where $D(b;\ell,m)$ was defined in~\eqref{eq:defDn} and
\[
V(b;s,e) := \frac{1}{e-s+1}\Big( \sum_{t = s+1 }^{k-2} D(t;s,b)^2 + \sum_{t = k+2}^{e-2} D(t;b+1,e)^2 \Big).
\]
Note that $Q(s,e)$ is simply the statistic $T_n$ from~\eqref{eq:defTn} computed pretending that the available sample consists of $X_s,...,X_e$.

Now WBS-SN is applied as follows. Denote by $F_n^M$ a set of $M$ pairs of integers $(s_m,e_m)$ which satisfy {$1 \leq s_m < e_m \leq n$ and $e_m - s_m \geq L_0$} 
with numbers $s_m, e_m$ drawn uniformly from the set \change{$\{1,...,n\}$} (independently with replacement) {and $L_0$ denoting a minimal interval length}.
Given this sample, apply Algorithm 1 with initialization WBS-SN$(1,n,\xi_n,\change{L_0,F_n^M})$. Here, the threshold parameter $\xi_n$ is determined by simulations as follows: generate $R$ samples of i.i.d multivariate normal random variables with constant mean zero and identity covariance matrix, with the same $n$ and $p$ as \change{$Y_1,...,Y_n$}. For the $i^{th}$ sample, calculate
\[
\hat{\xi}^{i}_n = \max_{m =1,...,M} Q(s_m,e_m), \quad i=1,...,R.
\]
Given the $R$ values $\hat{\xi}^{i}_n,i=1,...,R$ above pick $\xi_n$ as the $95\%$ quantile of the values \change{$\hat{\xi}^{1}_n,...,\hat{\xi}^{R}_n$}.  Since the SN test statistic is
asymptotically pivotal, this threshold is expected to well approximate the 95\% quantile of
the finite sample distribution of the maximum SN test statistic on the $M$ random intervals
under the null. The detailed algorithm is presented below.


\begin{algorithm}[H]
	\SetKwProg{Fn}{Function}{:}{}
	\Fn{WBS-SN(s,e,$\xi_n$,\change{$L_0,F_n^M$})}{
		\uIf {$e-s < L_0$ }{
			STOP;
		}\Else{
			\change{$\mathcal{M}_{s,e}:=$ set of those indices $m$ for which $[s_m, e_m] \in F_n^M$ is such that $[s_m, e_m] \subseteq [s,e]$ and $e_m-s_m\ge L_0$} \\ $m_0:=\arg \max_{m \in \mathcal{M}_{s,e}}Q(s_m,e_m)$ \\ \uIf {$Q(s_m,e_m) > \xi_n$}{ add $b_0 := \arg \max_{b} D(b;s_{m_0},e_{m_0})/V(b;s_{m_0},e_{m_0}) $ to set of estimated CP \\
				WBS-SN($s,b_0,\xi_n$,\change{$L_0,F_n^M$}) \\
				WBS-SN($b_0+1,e,\xi_n$,\change{$L_0,F_n^M$})
			}\Else{
				STOP
			}
		}
	}
	\caption{WBS-SN for independent data}
	\label{alg:WBS}
\end{algorithm} 

The same approach can be applied to multiple change point detection for high-dimensional time series, with an incorporation of a trimming parameter in our SN-based test statistic. To obtain the threshold $\xi_n$, we can apply the same $M$ random intervals and the trimmed SN-based test statistic with the same trimming parameter $\tau$ to i.i.d standard normal distributed data with the same $(n,p)$, as done for the independent data case. { Similarly, we also adopt a bound $L_0$ for the minimal length of random intervals which now depends on $\eta, n$. Some investigations of the sensitivity with respect to the choice of $L_0$ and some practical recommendations are provided in the simulation section in the supplement.   
}

\begin{algorithm}[H]
	\SetKwProg{Fn}{Function}{:}{}
	\Fn{WBS-SN$(s,e,\change{\xi_n,L_0,F_n^M, \tau})$}{
		\uIf{$e-s < L_0$}{
			STOP
		}\Else{
			$\mathcal{M}_{s,e}:=$ set of those indices $m$ for which $[s_m, e_m] \in F_n^M$ is such that $[s_m, e_m] \subseteq [s,e]$ and $e_m-s_m\ge L_0$ \\ \change{$(m_0, b_0):= \arg \max\limits_{m \in \mathcal{M}_{s,e}, b\in \{ s_m+3\tau+3, ..., e_m-3\tau-4 \} } \frac{D^2(b;s_m,e_m|\tau)}{ W_{n}(b;s_m,e_m|\tau) } $} \\ \uIf{$\frac{D^2(b_0;s_{m_0},e_{m_0}|\tau)}{ W_{n}(b_0;s_{m_0},e_{m_0}|\tau) } > \change{\xi_n}$}{ add $b_0$ to the set of estimated change points \\ WBS-SN$(s,b_0, \change{\xi_n,L_0,F_n^M, \tau})$ \\ WBS-SN$(b_0+1, e, \change{\xi_n,L_0,F_n^M, \tau})$ }\Else{ STOP } } } \caption{WBS-SN for time series} \label{algorithm:wbs} \end{algorithm}

\section{Numerical Results}
\label{sec:simulation}

In this section, we examine the finite sample performance of our proposed tests and estimation methods via simulations. In Section~\ref{sub:mean}, we present the size and power for our SN-based test in comparison with Kolmogorov-Smirnov type test for a single change point in high-dimensional independent data  and also examine the behavior of the test developed for the unsupervised case. In Section~\ref{sub:meanhdts}, we show the size and power for the test for a single change point in the mean of high-dimensional time series. Section~\ref{sub:estimate} and Section~\ref{sub:estimateTS} in the supplement contain the WBS-based estimation result in comparison with some existing methods for independent and dependent data, respectively.


	\subsection{Testing for high-dimensional independent data}
\label{sub:mean}

	In this subsection we  investigate the finite sample behavior of our test statistic for a mean shift. We shall first focus on the supervised case, i.e., under the alternative that there is one change point in the mean.   Consider the data generating process
\[
Y_t = {\delta} \1_{t > 0.5n} + \epsilon_t, \textrm{ for all } \change{t = 1,2,...,n,}
\]
	where ${\delta}$ is a p-dimensional vector representing the mean shift, and $\{\epsilon_t\}$ are i.i.d samples from multivariate normal distribution, with common mean ${0}$ and covariance matrix $\Sigma$. Under the null hypothesis where there is no change point, it is equivalent to the case that ${\delta} = {0}$, whereas under the alternative (there is one change point), we let ${\delta} = \kappa \change{(1,1,...,1)}^T$ with $\kappa \in \{0.1, 0.2\}$. For $\Sigma$, we consider four scenarios:
\begin{enumerate}
	\item [a)] Independent. $\Sigma=\change{I_p}$ (i.e., identity matrix).  
\item[b)] AR(1)-type correlation. The $(i,j)$ element in $\Sigma$ is $\sigma_{ij}=0.5^{|i-j|}$.
\item[c)] Banded. Specifically, the main diagonal elements are all 1. The first off-diagonal elements are all 0.5 and the second off-diagonal elements are all 0.25. All other elements are  zero.
\item[d)] Compound Symmetric. The main diagonal elements are all 1 and all remaining elements are  0.5.
\end{enumerate}	
We also tried non-Gaussian errors, where $\epsilon_t=\Sigma^{1/2}\widetilde{\epsilon}_t$, where $\widetilde{\epsilon}_t$ have i.i.d components with scaled $t(3)$ distribution that have mean zero and variance one.  
We let $p \in \{100, 200, 500\}$ and $n \in \{100,200,500\}$.

We shall formulate an extension of the classical Kolmogorov-Smirnov (KS) test statistic in the current context and compare with SN-based test via simulations. Let $\widehat{k}=\change{\mbox{argmax}_{k=2,...,n-2}} D(k;1,n)^2$ which is an estimate of change point location without self-normalization. We can then define an estimator of $\|\Sigma\|_F^2$ using the Jackknife-based approach as presented on page 814 of \cite{chen2010two} in two ways. On one hand, we can obtain a pre-break estimate and a post-break estimate of $\|\Sigma\|_F^2$ and then take the average of them, i.e.,
 \begin{multline*}
\widehat{\|\Sigma\|_{F,1}^2} = \frac{1}{2\widehat{k}(\widehat{k}-1)} tr\Big\{\sum_{j_1\not=j_2}^{\widehat{k}} (X_{j_1}-\bar{X}_{(j_1,j_2);1:\widehat{k}}) X_{j_1}^T (X_{j_2}-\bar{X}_{(j_1,j_2);1:\widehat{k}})X_{j_2}^T\Big\}
\\
+ \frac{1}{2(n-\widehat{k})(n-\widehat{k}-1)} tr\Big\{\sum_{j_1\not=j_2,\widehat{k}+1}^{n} (X_{j_1}-\bar{X}_{(j_1,j_2);(1+\widehat{k}):n}) X_{j_1}^T (X_{j_2}-\bar{X}_{(j_1,j_2);(1+\widehat{k}):n})X_{j_2}^T\Big\},
 \end{multline*}
 where $\bar{X}_{(j_1,j_2);a:b}$ denotes the average of the sample \change{$X_a,...,X_b$} without $X_{j_1}$ and $X_{j_2}$.
 On the other hand, we can form a demeaned sample by substracting $\bar{X}_{1:\widehat{k}}$ from \change{$(X_1,...,X_{\widehat{k}})$}
 and $\bar{X}_{(\widehat{k}+1):n}$ from \change{$(X_{\widehat{k}+1},...,X_n)$}, and then apply the jackknifed based estimator to the full demeaned sample; we denote the resulting estimator by
 $\widehat{\|\Sigma\|_{F,2}^2}$. Then we can define the following two statistics
\change{\[
KS_{n,1}=\frac{\sup_{k=2,...,n-2}D(k;1,n)^2}{n^6\widehat{\|\Sigma\|_{F,1}^2}},~
 KS_{n,2}=\frac{\sup_{k=2,...,n-2}D(k;1,n)^2}{n^6\widehat{\|\Sigma\|_{F,2}^2}}.
\]}
To facilitate the comparison, we also introduce an infeasible version,
\change{\[
KS_{n,Inf}=\frac{\sup_{k=2,...,n-2}D(k;1,n)^2}{n^6\|\Sigma\|_F^2}.
\]}
The limiting null distributions of the above three statistics are expected to be $\sup_{r\in [0,1]}|G(r;0,1)|^2$, the critical values of which can be obtained by simulations. It is worth noting that the limiting null for the infeasible test statistic can be easily derived from our
Theorem~\ref{cor:T0}.

  Below we compare four tests, $T_{n}$, $KS_{n,1}$, $KS_{n,2}$, $KS_{n,Inf}$ and $EH$ based on 5000 Monte Carlo replications with the nominal level $0.05$. Here $EH$ refers to the adaptive change point test developed by \cite{enikeeva2013high}, which requires Gaussian and independent components assumptions.  Table~\ref{tab:singleGaussian} below shows the rejection rate in percentage under $\mathcal{H}_0:\kappa=0$, $\mathcal{H}_{1,1}:\kappa=0.1$ and $\mathcal{H}_{1,2}:\kappa=0.2$ for Gaussian errors and Table~\ref{tab:singleNONGaussian} is for the non-Gaussian case.

\centerline{Please insert Table~\ref{tab:singleGaussian} here!}

\centerline{Please insert Table~\ref{tab:singleNONGaussian} here!}

  The above simulation results demonstrate that  when the error is Gaussian, (1) SN-based test has accurate size for independent,  AR(1) and Banded correlation models, whereas the test appears quite distorted in the compound symmetric case. This finding is not surprising as the compound symmetric case violates the theoretical assumptions imposed (see Assumption \ref{ass}), whereas independent,  AR(1) and Banded cases satisfy those assumptions. In a sense, this shows that our (weak componentwise dependence) assumptions are to a certain extent necessary. The KS tests (both infeasible and feasible ones) show similar size behavior except that they are noticeably undersized for $n=100$ case, and their size distortion in the compound symmetric case is even greater than our test. The test by \cite{enikeeva2013high} exhibits size distortion for all cases (undersized for independent case, and oversized for AR(1) and bounded correlation models) and its size for compound symmetric case is way too high. When the error is nonGaussian, our SN-based test and all KS tests appear to have similar rejection rates as the Gaussian case, indicating the robustness of our SN-based test with respect to heavy tailed errors. 
   By contrast, the size for EH 
  in the non-Gaussian case is very high, implying the sensitivity/non-robustness of their test with respect to non-Gaussianity. 
   
  A comparison of the powers for SN-based and KS tests shows that our test is very comparable to all three KS tests, which perform similarly.  Overall the finite sample size and power performance of four tests (SN and three KS tests) are very much comparable with no single test dominating others. Note that the feasible KS tests assume there is one change point, and it may perform very poorly when there are more than one change-point (results not shown). Methodologically, it seems desirable to develop a test that does not involve explicit estimation of change points, which is itself a difficult problem, especially when there are multiple change points. The power of EH is hard to interpret given its distorted size, and we shall not look into the size-adjusted power as we would not recommend EH test for nonGaussian and cross-sectionally dependent
 high-dimensional independent data.

We further  examine the finite sample performance of the test we develop for the unsupervised case (i.e., there could be  multiple change points under the alternative), in comparison with the SN-based test aimed for one change point only. Three different data generating processes are considered below: \\

\noindent ($\mathcal{H}_{1,1}$) (one change-point alternative): $\mu_t = {\delta}\1_{t/n > 1/2}$;\\

\noindent ($\mathcal{H}_{1,2}$) (two change-point alternative): $\mu_t = {\delta}\1_{ t/n > 1/3} -{\delta} \1_{t/n > 2/3}$;\\

\noindent ($\mathcal{H}_{1,3}$) (three change-point alternative): $\mu_t = {\delta}\1_{t/n > 1/4} - {\delta}\1_{3/4 \geq t/n > 1/2}$;\\

Under the null hypothesis, ${\delta} = 0$, whereas under the alternative we let ${\delta} = \change{(0.2,0.2,...,0.2)}^T$.  Following the practice in \cite{zhang2018unsupervised},  we set $\epsilon = 0.1$. The empirical rejection rates (in percentage) are summarized in Table \ref{tab:unsupervised} below for several combinations of $(n,p)$, where we denote the statistic developed for the supervised case as $T_n$ and for the unsupervised case as $T_n^\diamond$.

\centerline{Please insert Table~\ref{tab:unsupervised} here!}

From Table~\ref{tab:unsupervised}, we can observe that $T_n$ have empirical rejection rates close to $5\%$ under the null for all cases except for compound symmetric case, and $T_n^\diamond$ exhibits quite a bit distortion when $n=100, 200$ and its size appears accurate for $n=500$ for the independent,  AR(1) and banded cases. When the error has compound symmetric covariance, the  size distortion for $T_n^\diamond$ is considerably higher than that for $T_n$, showing the difficulty brought by the strong componentwise dependence. Under the alternative, we can see that the supervised  test statistic has much higher power  in the single change point case, but the power lost drastically when there are two or three change points, suggesting the inability of the supervised test that targets one change point to accommodate more than one. By contrast, the unsupervised test still preserves reasonable amount of power, which is consistent with our theory. The results for the non-Gaussian case are qualitatively similar so are not included here to conserve space.


	\subsection{Testing for high-dimensional time series}
\label{sub:meanhdts}

We consider the following single change point model. 
\begin{example} 
	\label{exp:1}
	Consider the following VAR(1) model,
	\begin{align*}
	Y_{t} - {\delta} \1_{\{ t > 0.5n \}} = \rho (Y_{t-1} - {\delta} \1_{\{ t > 0.5n \}}) + \epsilon_{t},
	\end{align*}
	where  $\{ \epsilon_{t} \}$ are the temporally independent errors and we consider $\rho \in \{0.2, 0.5, 0.7, -0.5\}$. Under the null hypothesis, ${\delta}=0$. Under the alternative hypothesis, we examine the following two types of mean shift, i.e.,
	\begin{itemize}
		\item[(i)] Homogeneous alternative:  $ {\delta}^T = 0.1\change{(1,1,...,1)} .$ 
		\item[(ii)] Inhomogeneous alternative: $$ {\delta}^T = 0.2\change{(\delta_1,...,\delta_p)}, $$
		where $ \change{(|\delta_1|,...,. |\delta_p|)} \overset{i.i.d}{\sim} Uniform(0,1) $ and the signs of $\change{(\delta_1,..., \delta_p)}$ are randomly sampled with equal probability.
	\end{itemize}
	Also, for the innovations $ \{ \epsilon_{t} \} $, we consider the following two scenarios
	\begin{itemize}
		\item[(a)] Gaussian errors with AR(1) type convariance structure: 	$ \epsilon_{t} \overset{i.i.d}{\sim} N(0, \Sigma_{\epsilon})$, where $\Sigma_{\epsilon} = (0.5^{|i-j|})_{i,j=1}^p$.
		\item[(b)] Non-Gaussian errors: $\{ \epsilon_{t} \}_{t=1}^n$ are i.i.d and each entry of $\epsilon_t= (\epsilon_{t, 1}, \change{...},\epsilon_{t, p})^T$ is generated independently from $Uniform[-\sqrt{3},\sqrt{3}]$. 
	\end{itemize}
\end{example}
To illustrate the finite sample performance of  our statistic $T_n$, we compare with the methods described in \cite{horvath2012change} (denoted as HH) and the double CUSUM binary segmentation algorithm (denoted as DCBS) [\cite{cho2016change}]. For HH, it works for independent panel time series, targets dense alternative and involves a bandwidth parameter $h$, which is  used in  the kernel estimator of long run variance. For DCBS, it contains several tuning parameters and requires the use of  bootstrap. We shall implement DCBS using the  R package ``hdbinseg" and the default tuning parameter values. Note that the method proposed by \cite{jirak2015uniform} targets sparse alternative in the mean of high-dimensional time series, so is not included in our comparison. 

\centerline{Please insert Table~\ref{tab:sizehdts} here!}

As can be seen from Table~\ref{tab:sizehdts}, the size of our statistic $T_n$ can depend on the amount of trimming $\eta$, magnitude and sign of temporal dependence $\rho$, sample size $n$ and the dimension $p=2n$. When the temporal dependence is weak, i.e.,  $\rho=0.2$, the size is fairly accurate for both trimming levels ($\eta=0.02$ and $0.05$) and all sample sizes ($n=200, 400, 800$). As the temporal dependence gets stronger, especially for $\rho=0.7$, we see some size distortion at small sample size $n=200$ for both trimming levels, but the size distortion is much reduced with larger sample size $n=400, 800$. The above comment applies to both Gaussian model (a) and non-Gaussian setting (b). By contrast,  the rejection rates of DCBS are almost always equal to zero. This may be due to the default tuning parameters used in ``hdbinseg", which aim to make Type I error zero in large samples to be  consistent with the consistency results stated in 
\cite{cho2016change}. The HH method is apparently oversized in all settings, which is presumably 
 due to the cross-sectional dependence. Therefore the size results demonstrate the decent approximation our limiting null distribution (under fixed-$\eta$ asymptotics) is able to provide and shows its practical usefulness in accommodating weak dependence across panel and over time. 
 
 \centerline{Please insert Table~\ref{tab:powerhdts} here!}
 
 The power results are collected in Table~\ref{tab:powerhdts}. Our test exhibits quite reasonable power, which could depend on the choice of trimming parameter, whereas HH method's raw power is high due to the (sometimes severe) oversize under the null and 
 DCBS exhibits lower power, which is presumably due to the large critical values used to control Type I error (to make it zero in large sample).

\section{Summary and Conclusion}
\label{sec:conclusion}

In this paper, we propose a non-parametric methodology to testing and estimation of change-points in the mean of a sequence of high-dimensional data. Our methodological developments start with the relatively simple testing problem: testing for  one change-point in the mean of high dimensional independent data, by marrying  the self-normalization idea in \cite{shao2010testing} and U-statistic based approach of \cite{chen2010two}  for high dimensional two-sample testing. Our test differs from most existing ones in the literature by targeting the dense alternative,  allowing weak cross-sectional dependence, and imposing no particular rate constraints on the dimension $p$ as a function of sample size $n$. It is worth noting that our test does not involve a tuning parameter and is based on critical values tabulated in the paper, which could be appealing for practitioners. 

On the testing front, several extensions were pursued in the paper, including (1)  change point testing in  the presence of multiple change points in mean; (2) testing for a change-point in covariance matrix assuming zero mean; (3) change point testing for the mean of high-dimensional time series. In particular, the extension to high-dimensional time series is highly nontrivial and theoretically challenging. To attenuate the bias caused by weak temporal dependence, we introduce a trimmed U-statistic and adopt the fixed-$b$ asymptotic framework [\cite{kiefer2005new}] to derive the limiting null distribution of the self-normalized test statistic, which appears to approximate the finite sample distribution well for a broad range of time series dependence, as demonstrated in the simulations. On the estimation front, we propose to combine the idea of wild binary segmentation [\cite{fryzlewicz2014wild}] with our SN-based test to estimate the number and location of change points. Simulations show that our method can be more effective when the mean shift is dense as compared to the INSPECT algorithm [\cite{wang2018high}] for high-dimensional independent data, and is at least comparable to the procedures used in \cite{cho2016change} and \cite{li2019change} for high-dimensional time series. On the theory front, we show the weak convergence of the sequential U-statistic based processes for both independent and dependent high-dimensional data, which can be of independent interest.

There are a number of topics that are worth investigating. Firstly, it would be interesting to extend the asymptotic theory for high-dimensional time series to a more general setting, such as nonlinear causal process [\cite{wu2005nonlinear}]; see \cite{wang2019hypothesis} for a recent extension of SN to high-dimensional time series under the framework of nonlinear causal process. Secondly, while we consider a shift in mean in this paper, it is also of great value to study change point detection for other high-dimensional parameters, such as the vector of marginal quantiles; see \cite{shao2010testing} for a more general framework but in a low-dimensional time series setting.  Thirdly, selecting the trimming parameter $\tau$ for real applications can be nontrivial and it would be interesting to develop a data-driven procedure to be adaptive to the magnitude of temporal dependence. Lastly, there is no theory available for the WBS-SN method used here. It would be interesting to provide some theoretical justifications, as done in \cite{fryzlewicz2014wild} in a much simpler setting, and this seems very challenging. Further research along some of these directions is well underway.

\bigskip

{\bf Acknowledgements:} We would like to thank the three reviewers for their constructive comments, which led to substantial improvements. We are grateful to Dr. Farida Enikeeva for sending us the code used in Enikeeva and Harchaoui (2019). Shao's research is partially supported	by NSF-DMS 1807023 and NSF-DMS-2014018. Vogulshev's research is partially supported by a discovery grant from NSERC of Canada. 

\begin{supplement}
	\stitle{Supplement to ``Inference for Change Points in High-Dimensional Data via Self-normalization"}
	\sdescription{The supplementary material contains all the proofs for theoretical results stated in the paper. Additional simulation results are also included. }
\end{supplement}

\bibliography{hdcp}

\newpage

\begin{footnotesize}
	\begin{table}[h]
		\resizebox*{!}{0.98\textheight}{
			\begin{tabular}{ccc|ccccc||ccccc}
				\hline
				&&&\multicolumn{5}{c||}{ID}&\multicolumn{5}{c}{AR(1)}\\
				\cline{4-13}
				&$n$&$p$&$T_n$&$KS_{n,Inf}$&$KS_{n,1}$&$KS_{n,2}$&$EH$&$T_n$&$KS_{n,Inf}$&$KS_{n,1}$&$KS_{n,2}$&$EH$\\\hline
				\multirow{9}{*}{$\mathcal{H}_0$}&\multirow{3}{*}{$100$}&$100$&5.6&2.2&2.3&2.6&1.7&6.3&3.3&3.6&3.7&10.8\\
				&&$200$&4.9&3.4&3.3&3.3&1.4&4.7&3.1&2.9&2.9&11.7\\
				&&$500$&5.3&2.1&2.2&2.0&1.1&6.1&3.3&3.4&3.2&10.8\\\cline{2-13}
				&\multirow{3}{*}{$200$}&$100$&5.8&4.0&4.0&4.3&1.2&5.9&4.2&4.2&4.2&9.4\\
				&&$200$&5.1&4.3&4.4&4.6&0.4&4.6&3.1&3.2&3.4&11.3\\
				&&$500$&6.0&3.7&3.6&3.6&0.8&5.8&3.7&3.9&3.8&10.8\\\cline{2-13}
				&\multirow{3}{*}{$500$}&$100$&6.3&4.9&5.0&5.1&0.5&5.8&6.7&7.0&7.0&10.4\\
				&&$200$&6.2&5.3&5.6&5.6&0.7&5.6&5.0&4.9&5.0&9.8\\
				&&$500$&6.0&4.5&4.3&4.3&0.4&6.2&4.7&4.5&4.5&9.2\\\hline
				\multirow{9}{*}{$\mathcal{H}_{1,1}$}&\multirow{3}{*}{$100$}&$100$&34.5&30.0&30.0&30.5&11.4&27.0&27.2&27.8&28.5&31.1\\
				&&$200$&51.9&49.8&49.5&49.4&24.4&37.4&37.4&38.1&38.0&44.5\\
				&&$500$&82.5&85.5&85.6&84.7&64.5&64.8&65.7&66.3&65.3&72.1\\\cline{2-13}
				&\multirow{3}{*}{$200$}&$100$&77.5&81.5&82.0&82.1&42.6&61.4&62.1&62.1&62.1&59.0\\
				&&$200$&94.7&96.3&96.3&96.5&81.2&79.3&83.1&83.8&83.6&79.6\\
				&&$500$&100.0&100.0&100.0&100.0&99.9&98.5&99.3&99.3&99.3&99.0\\\cline{2-13}
				&\multirow{3}{*}{$500$}&$100$&99.8&100.0&100.0&100.0&99.5&97.9&98.3&98.3&98.3&98.2\\
				&&$200$&100.0&100.0&100.0&100.0&100.0&99.9&99.9&99.9&99.9&99.9\\
				&&$500$&100.0&100.0&100.0&100.0&100.0&100.0&100.0&100.0&100.0&100.0\\\hline
				\multirow{9}{*}{$\mathcal{H}_{1,2}$}&\multirow{3}{*}{$100$}&$100$&99.3&100.0&100.0&100.0&97.8&100.0&100.0&100.0&100.0&93.9\\
				&&$200$&100.0&100.0&100.0&100.0&100.0&100.0&100.0&100.0&100.0&99.9\\
				&&$500$&100.0&100.0&100.0&100.0&100.0&100.0&100.0&100.0&100.0&100.0\\\cline{2-13}
				&\multirow{3}{*}{$200$}&$100$&100.0&100.0&100.0&100.0&100.0&100.0&100.0&100.0&100.0&100.0\\
				&&$200$&100.0&100.0&100.0&100.0&100.0&100.0&100.0&100.0&100.0&100.0\\
				&&$500$&100.0&100.0&100.0&100.0&100.0&100.0&100.0&100.0&100.0&100.0\\\cline{2-13}
				&\multirow{3}{*}{$500$}&$100$&100.0&100.0&100.0&100.0&100.0&100.0&100.0&100.0&100.0&100.0\\
				&&$200$&100.0&100.0&100.0&100.0&100.0&100.0&100.0&100.0&100.0&100.0\\
				&&$500$&100.0&100.0&100.0&100.0&100.0&100.0&100.0&100.0&100.0&100.0\\\hline
				&&&\multicolumn{5}{c||}{BD}&\multicolumn{5}{c}{CS}\\
				\hline
				\multirow{9}{*}{$\mathcal{H}_0$}&\multirow{3}{*}{$100$}&$100$&5.8&3.5&3.5&3.5&10.2&11.4&11.4&12.6&12.4&89.2\\
				&&$200$&4.6&3.1&3.2&3.2&10.8&9.0&10.2&12.4&12.4&94.8\\
				&&$500$&5.5&3.4&3.4&3.3&10.3&9.5&11.7&14.1&14.0&97.9\\\cline{2-13}
				&\multirow{3}{*}{$200$}&$100$&6.1&3.7&3.7&3.8&7.9&10.3&13.6&15.9&15.7&92.6\\
				&&$200$&4.3&3.3&3.4&3.2&9.3&11.9&13.9&15.4&14.9&96.3\\
				&&$500$&5.5&4.1&4.2&4.2&9.6&10.3&13.1&14.9&14.2&98.5\\\cline{2-13}
				&\multirow{3}{*}{$500$}&$100$&6.3&6.7&6.9&7.0&9.5&12.3&16.5&16.8&16.8&97.2\\
				&&$200$&6.3&5.5&5.6&5.7&7.9&10.4&14.5&15.1&14.6&98.5\\
				&&$500$&5.7&4.7&4.7&4.7&7.2&14.1&18.1&18.3&18.2&99.4\\\hline
				\multirow{9}{*}{$\mathcal{H}_{1,1}$}&\multirow{3}{*}{$100$}&$100$&26.5&24.8&25.2&25.2&29.4&18.0&19.3&20.7&20.7&91.1\\
				&&$200$&35.3&36.6&36.7&36.7&42.3&18.1&18.0&20.2&20.2&95.7\\
				&&$500$&67.1&66.4&66.3&65.8&70.6&18.5&17.5&19.5&20.5&98.0\\\cline{2-13}
				&\multirow{3}{*}{$200$}&$100$&60.7&63.2&62.9&63.9&56.1&26.9&28.4&30.5&30.5&94.6\\
				&&$200$&82.9&87.1&87.2&87.2&84.5&27.3&29.3&30.9&30.6&97.6\\
				&&$500$&99.1&99.6&99.6&99.6&99.5&26.4&27.8&28.6&29.0&99.1\\\cline{2-13}
				&\multirow{3}{*}{$500$}&$100$&98.7&99.4&99.4&99.4&98.0&48.1&51.6&51.0&52.0&98.3\\
				&&$200$&100.0&100.0&100.0&100.0&100.0&47.0&49.7&50.2&50.2&99.4\\
				&&$500$&100.0&100.0&100.0&100.0&100.0&46.1&48.4&49.2&49.9&99.7\\\hline
				\multirow{9}{*}{$\mathcal{H}_{1,2}$}&\multirow{3}{*}{$100$}&$100$&95.1&95.8&95.7&95.8&95.3&38.6&38.3&39.6&40.1&94.6\\
				&&$200$&99.7&99.9&99.9&99.9&99.9&41.3&38.4&40.2&40.2&98.1\\
				&&$500$&100.0&100.0&100.0&100.0&100.0&38.4&38.4&40.4&40.8&99.3\\\cline{2-13}
				&\multirow{3}{*}{$200$}&$100$&100.0&100.0&100.0&100.0&100.0&61.8&62.6&63.2&63.3&98.2\\
				&&$200$&100.0&100.0&100.0&100.0&100.0&60.0&62.9&63.7&63.5&99.5\\
				&&$500$&100.0&100.0&100.0&100.0&100.0&62.5&63.9&63.8&63.9&99.9\\\cline{2-13}
				&\multirow{3}{*}{$500$}&$100$&100.0&100.0&100.0&100.0&100.0&90.2&92.1&91.8&91.8&100.0\\
				&&$200$&100.0&100.0&100.0&100.0&100.0&91.4&93.0&93.7&93.8&100.0\\
				&&$500$&100.0&100.0&100.0&100.0&100.0&89.9&91.2&91.4&91.5&99.9\\\hline
		\end{tabular}}
		\caption{Empirical Rejection Rates (in percentage) for One Change Point in Mean (Gaussian Error)}\label{tab:singleGaussian}
	\end{table}
\end{footnotesize}

\clearpage
\newpage

\begin{table}[h!]
	\resizebox*{!}{0.98\textheight}{
		\begin{tabular}{ccc|ccccc||ccccc}
			\hline
			&&&\multicolumn{5}{c||}{ID}&\multicolumn{5}{c}{AR(1)}\\
			\cline{4-13}
			&$n$&$p$&$T_n$&$KS_{n,Inf}$&$KS_{n,1}$&$KS_{n,2}$&$EH$&$T_n$&$KS_{n,Inf}$&$KS_{n,1}$&$KS_{n,2}$&$EH$\\\hline
			\multirow{9}{*}{$\mathcal{H}_0$}&\multirow{3}{*}{$100$}&$100$&5.0&3.7&3.3&3.1&84.3&7.0&3.9&3.2&3.0&76.9\\
			&&$200$&5.4&3.6&2.3&2.9&97.1&5.7&3.4&3.3&2.9&92.9\\
			&&$500$&5.2&2.7&2.4&1.7&100.0&5.2&2.2&2.3&1.9&100.0\\\cline{2-13}
			&\multirow{3}{*}{$200$}&$100$&5.5&4.8&4.8&4.3&84.6&5.4&4.5&4.5&4.6&78.1\\
			&&$200$&5.1&4.0&4.3&4.2&97.3&6.1&4.6&3.9&4.3&93.5\\
			&&$500$&6.2&4.2&3.9&3.7&100.0&6.4&4.3&3.8&3.8&99.7\\\cline{2-13}
			&\multirow{3}{*}{$500$}&$100$&4.1&4.8&4.7&5.0&88.1&4.9&5.5&5.4&5.9&81.9\\
			&&$200$&5.2&3.6&3.4&3.4&97.9&6.4&5.4&5.2&5.4&95.6\\
			&&$500$&5.4&3.4&3.6&3.3&100.0&5.6&3.7&4.1&4.2&99.9\\\hline
			\multirow{9}{*}{$\mathcal{H}_{1,1}$}&\multirow{3}{*}{$100$}&$100$&35.0&29.8&33.2&32.8&85.6&28.8&25.6&26.9&27.5&81.1\\
			&&$200$&54.2&52.1&52.5&51.6&96.9&40.1&36.5&38.2&38.1&94.5\\
			&&$500$&87.4&87.1&87.2&84.9&100.0&66.4&67.9&67.5&65.9&100.0\\\cline{2-13}
			&\multirow{3}{*}{$200$}&$100$&75.9&79.2&80.4&80.0&100.0&58.8&60.2&61.4&61.9&88.5\\
			&&$200$&94.2&97.4&97.3&97.0&99.5&80.2&83.9&83.7&83.8&98.5\\
			&&$500$&100.0&100.0&100.0&99.9&100.0&98.3&99.1&98.8&98.6&100.0\\\cline{2-13}
			&\multirow{3}{*}{$500$}&$100$&99.9&100.0&100.0&99.9&100.0&97.4&98.9&98.7&98.7&99.5\\
			&&$200$&100.0&100.0&100.0&100.0&100.0&100.0&100.0&100.0&100.0&100.0\\
			&&$500$&100.0&100.0&100.0&99.9&100.0&100.0&100.0&100.0&99.9&100.0\\\hline
			\multirow{9}{*}{$\mathcal{H}_{1,2}$}&\multirow{3}{*}{$100$}&$100$&99.2&99.6&99.4&99.2&99.3&93.4&93.9&93.9&93.9&97.5\\
			&&$200$&99.8&100.0&99.9&99.6&100.0&99.5&99.8&99.5&99.2&99.9\\
			&&$500$&100.0&100.0&100.0&100.0&100.0&100.0&100.0&100.0&100.0&100.0\\\cline{2-13}
			&\multirow{3}{*}{$200$}&$100$&100.0&100.0&100.0&99.9&100.0&99.9&100.0&99.9&99.9&100.0\\
			&&$200$&100.0&100.0&100.0&100.0&100.0&100.0&100.0&100.0&100.0&100.0\\
			&&$500$&100.0&100.0&100.0&100.0&100.0&100.0&100.0&100.0&100.0&100.0\\\cline{2-13}
			&\multirow{3}{*}{$500$}&$100$&100.0&100.0&100.0&100.0&100.0&100.0&100.0&100.0&100.0&100.0\\
			&&$200$&100.0&100.0&100.0&100.0&100.0&100.0&100.0&100.0&100.0&100.0\\
			&&$500$&100.0&100.0&100.0&100.0&100.0&100.0&100.0&100.0&100.0&100.0\\\hline
			&&&\multicolumn{5}{c||}{BD}&\multicolumn{5}{c}{CS}\\
			\hline
			\multirow{9}{*}{$\mathcal{H}_0$}&\multirow{3}{*}{$100$}&$100$&6.7&3.7&2.9&3.0&76.4&11.8&10.6&14.4&14.1&90.6\\
			&&$200$&5.2&3.9&3.4&3.0&93.0&11.2&11.5&14.0&13.6&97.2\\
			&&$500$&4.7&2.4&2.3&1.8&100.0&11.9&12.6&15.3&15.3&99.5\\\cline{2-13}
			&\multirow{3}{*}{$200$}&$100$&5.9&4.4&4.2&4.7&77.0&12.3&15.1&16.1&16.1&94.4\\
			&&$200$&6.0&4.1&4.1&4.2&93.4&12.3&15.5&16.2&16.6&98.2\\
			&&$500$&5.5&4.3&4.0&3.9&99.9&12.2&13.3&15.3&15.1&100.0\\\cline{2-13}
			&\multirow{3}{*}{$500$}&$100$&4.6&5.4&5.1&5.5&81.7&11.5&15.9&16.1&15.7&96.5\\
			&&$200$&5.9&5.3&5.0&5.1&95.5&12.0&15.1&15.9&16.0&99.2\\
			&&$500$&6.0&3.4&3.7&3.8&99.9&12.9&16.5&16.8&17.1&100.0\\\hline
			\multirow{9}{*}{$\mathcal{H}_{1,1}$}&\multirow{3}{*}{$100$}&$100$&29.3&24.9&26.3&26.3&80.5&19.6&19.9&21.4&22.1&92.1\\
			&&$200$&40.6&35.9&38.5&38.1&94.6&17.9&17.5&18.4&18.8&97.5\\
			&&$500$&66.9&67.8&67.4&66.7&100.0&20.1&20.1&22.2&22.4&99.8\\\cline{2-13}
			&\multirow{3}{*}{$200$}&$100$&60.5&61.3&61.8&62.1&87.4&25.0&27.1&27.3&27.3&96.3\\
			&&$200$&81.3&84.9&84.5&84.2&98.7&28.8&30.4&31.4&31.4&98.8\\
			&&$500$&98.4&99.2&99.3&98.9&100.0&26.8&28.0&29.7&29.1&99.9\\\cline{2-13}
			&\multirow{3}{*}{$500$}&$100$&97.8&99.4&99.1&99.1&99.7&46.9&49.5&49.7&50.0&98.6\\
			&&$200$&99.9&100.0&100.0&100.0&100.0&44.9&48.6&49.1&49.3&99.4\\
			&&$500$&100.0&100.0&100.0&99.9&100.0&44.4&47.4&48.6&48.9&99.9\\\hline
			\multirow{9}{*}{$\mathcal{H}_{1,2}$}&\multirow{3}{*}{$100$}&$100$&94.3&95.2&94.8&94.7&97.5&39.0&37.5&40.0&40.5&95.9\\
			&&$200$&99.6&99.9&99.6&99.4&100.0&36.7&36.1&37.4&37.2&98.5\\
			&&$500$&100.0&100.0&100.0&100.0&100.0&41.1&38.5&39.1&38.9&99.9\\\cline{2-13}
			&\multirow{3}{*}{$200$}&$100$&99.9&100.0&99.9&99.9&100.0&58.3&60.2&61.8&61.5&98.5\\
			&&$200$&100.0&100.0&100.0&100.0&100.0&62.4&63.4&64.4&63.7&99.7\\
			&&$500$&100.0&100.0&100.0&100.0&100.0&60.5&61.4&62.3&62.1&100.0\\\cline{2-13}
			&\multirow{3}{*}{$500$}&$100$&100.0&100.0&100.0&100.0&100.0&91.2&92.3&92.2&92.4&100.0\\
			&&$200$&100.0&100.0&100.0&100.0&100.0&90.5&92.3&92.8&92.9&99.9\\
			&&$500$&100.0&100.0&100.0&100.0&100.0&88.8&91.4&91.5&91.6&100.0\\\hline
	\end{tabular}}
	\caption{Empirical Rejection Rates (in percentage) for One Change Point in Mean (Non-Gaussian Error)}\label{tab:singleNONGaussian}
\end{table}
\clearpage
\newpage

\begin{table}[h!]
	\centering
	\resizebox{\textwidth}{!}{
		\begin{tabular}{ccc|ccc|ccc|ccc}\hline
			&&&\multicolumn{3}{c|}{$n = 100$}&\multicolumn{3}{c|}{$n = 200$}&\multicolumn{3}{c}{$n = 500$}\\
			&&&$p = 100$&$p = 200$&$p = 500$&$p = 100$&$p = 200$&$p = 500$&$p = 100$&$p = 200$&$p = 500$\\\hline
			\multirow{8}{*}{ID}&\multirow{2}{*}{$\mathcal{H}_0$}&$T_n$&4.1&4.1&4.3&6.1&5.2&4.7&5.9&5.7&6.3\\
			&&$T_n^{\diamond}$&14.1&12.5&13.6&7.6&7.6&5.9&6.4&7.0&4.8\\\cline{3-12}
			&\multirow{2}{*}{$\mathcal{H}_{1,1}$}&$T_n$&99.6&100.0&100.0&100.0&100.0&100.0&100.0&100.0&100.0\\
			&&$T_n^{\diamond}$&51.6&82.4&99.6&97.2&99.9&100.0&100.0&100.0&100.0\\\cline{3-12}
			&\multirow{2}{*}{$\mathcal{H}_{1,2}$}&$T_n$&0.3&0.0&0.0&0.0&0.0&0.0&0.0&0.0&0.0\\
			&&$T_n^{\diamond}$&83.0&97.0&100.0&100.0&100.0&100.0&100.0&100.0&100.0\\\cline{3-12}
			&\multirow{2}{*}{$\mathcal{H}_{1,3}$}&$T_n$&0.3&0.3&0.1&0.2&0.1&0.0&0.0&0.0&0.0\\
			&&$T_n^{\diamond}$&72.3&94.6&100.0&99.8&100.0&100.0&100.0&100.0&100.0\\\hline
			\multirow{8}{*}{AR}&\multirow{2}{*}{$\mathcal{H}_0$}&$T_n$&4.7&4.7&4.5&5.7&5.6&5.0&6.0&5.6&5.9\\
			&&$T_n^{\diamond}$&17.3&15.6&15.0&7.9&7.9&6.8&7.5&7.9&6.3\\\cline{3-12}
			&\multirow{2}{*}{$\mathcal{H}_{1,1}$}&$T_n$&92.8&99.4&100.0&99.9&100.0&100.0&100.0&100.0&100.0\\
			&&$T_n^{\diamond}$&38.7&62.5&94.2&84.0&98.3&100.0&100.0&100.0&100.0\\\cline{3-12}
			&\multirow{2}{*}{$\mathcal{H}_{1,2}$}&$T_n$&1.5&0.5&0.0&0.2&0.0&0.0&0.0&0.0&0.0\\
			&&$T_n^{\diamond}$&65.6&86.2&99.8&97.1&100.0&100.0&100.0&100.0&100.0\\\cline{3-12}
			&\multirow{2}{*}{$\mathcal{H}_{1,3}$}&$T_n$&2.3&0.6&0.6&0.8&0.5&0.0&0.1&0.0&0.0\\
			&&$T_n^{\diamond}$&58.4&81.1&99.5&96.1&99.8&100.0&100.0&100.0&100.0\\\hline
			\multirow{8}{*}{BD}&\multirow{2}{*}{$\mathcal{H}_0$}&$T_n$&4.8&4.9&4.4&5.8&5.6&5.5&6.1&6.0&5.4\\
			&&$T_n^{\diamond}$&16.2&14.3&13.0&8.3&7.6&7.2&7.1&6.5&6.3\\\cline{3-12}
			&\multirow{2}{*}{$\mathcal{H}_{1,1}$}&$T_n$&93.7&99.7&100.0&100.0&100.0&100.0&100.0&100.0&100.0\\
			&&$T_n^{\diamond}$&39.4&62.6&94.9&86.7&99.0&100.0&100.0&100.0&100.0\\\cline{3-12}
			&\multirow{2}{*}{$\mathcal{H}_{1,2}$}&$T_n$&0.9&0.1&0.0&0.0&0.0&0.0&0.0&0.0&0.0\\
			&&$T_n^{\diamond}$&65.2&84.0&99.7&97.1&99.9&100.0&100.0&100.0&100.0\\\cline{3-12}
			&\multirow{2}{*}{$\mathcal{H}_{1,3}$}&$T_n$&1.1&2.0&0.4&0.8&0.2&0.0&0.0&0.0&0.0\\
			&&$T_n^{\diamond}$&60.3&81.3&98.7&96.4&99.9&100.0&100.0&100.0&100.0\\\hline
			\multirow{8}{*}{CS}&\multirow{2}{*}{$\mathcal{H}_0$}&$T_n$&11.6&11.2&11.4&10.6&10.8&11.0&11.2&11.4&11.3\\
			&&$T_n^{\diamond}$&44.7&44.5&44.7&39.3&38.7&39.4&32.6&33.5&33.6\\\cline{3-12}
			&\multirow{2}{*}{$\mathcal{H}_{1,1}$}&$T_n$&41.0&39.4&37.3&57.8&60.2&60.4&89.2&90.0&91.3\\
			&&$T_n^{\diamond}$&52.5&51.4&54.4&59.3&57.2&57.5&79.9&81.9&81.9\\\cline{3-12}
			&\multirow{2}{*}{$\mathcal{H}_{1,2}$}&$T_n$&11.6&10.1&9.5&9.0&11.2&9.3&4.8&7.6&7.3\\
			&&$T_n^{\diamond}$&57.9&58.5&58.9&64.6&66.6&65.6&88.2&88.3&88.2\\\cline{3-12}
			&\multirow{2}{*}{$\mathcal{H}_{1,3}$}&$T_n$&10.9&11.6&12.4&13.1&12.1&15.0&11.3&14.5&12.8\\
			&&$T_n^{\diamond}$&53.9&58.3&55.9&65.1&63.9&64.2&87.7&87.7&88.5\\\hline
		\end{tabular}
	}
	\caption{Empirical Rejection Rates  (in percentage) for Multiple Change Points (Gaussian Error)}\label{tab:unsupervised}
\end{table}

\clearpage
\newpage

\begin{table}[t] \centering 
	\begin{scriptsize}
	\caption{Empirical Rejection Rates (in percentage) for Single Change Point Testing: Sizes  for Example \ref{exp:1} with $\mu = 0$ and $p=2n$.} \label{tab:sizehdts}
	\label{} 
	\begin{tabular}{@{\extracolsep{5pt}} cccccccc} 
		\\[-1.8ex]\hline 
		\hline \\[-1.8ex] 
		& \multirow{2}{*}{$\rho$} & \multirow{2}{*}{$n$} &  \multicolumn{2}{c}{$T_n$} & \multirow{2}{*}{DCBS} & \multicolumn{2}{c}{HH} \\  \cline{4-5}  \cline{7-8}
		&  &  & $ \eta = 0.02 $ & $\eta = 0.05$ &    & $h=3$ & $h=6$ \\ 
		\hline \\[-1.8ex] 
		\multirow{12}{*}{(a)} & $0.2$ & $200$ & $0.042$ & $0.051$ & $0$ & $0.326$ & $0.264$ \\ 
		& $0.2$ & $400$ & $0.048$ & $0.047$ & $0$ & $0.377$ & $0.219$ \\ 
		& $0.2$ & $800$ & $0.062$ & $0.062$ & $0$ & $0.488$ & $0.236$ \\ \cline{2-8}
		& $0.5$ & $200$ & $0.067$ & $0.063$ & $0$ & $0.994$ & $0.363$ \\ 
		& $0.5$ & $400$ & $0.051$ & $0.052$ & $0$ & $1$ & $0.482$ \\ 
		& $0.5$ & $800$ & $0.062$ & $0.066$ & $0$ & $1$ & $0.643$ \\ \cline{2-8}
		& $0.7$ & $200$ & $0.011$ & $0.094$ & $0$ & $1$ & $0.980$ \\ 
		& $0.7$ & $400$ & $0.066$ & $0.058$ & $0.0005$ & $1$ & $1.000$ \\ 
		& $0.7$ & $800$ & $0.061$ & $0.069$ & $0$ & $1$ & $1$ \\  \cline{2-8}
		& $$-$0.5$ & $200$ & $0.020$ & $0.018$ & $0$ & $0.562$ & $0.964$ \\ 
		& $$-$0.5$ & $400$ & $0.034$ & $0.030$ & $0$ & $0.382$ & $0.906$ \\ 
		& $$-$0.5$ & $800$ & $0.057$ & $0.053$ & $0$ & $0.336$ & $0.947$ \\ \hline
		\multirow{12}{*}{(b)} & $0.2$ & $200$ & $0.052$ & $0.058$ & $0$ & $0.204$ & $0.084$ \\ 
		& $0.2$ & $400$ & $0.053$ & $0.049$ & $0$ & $0.204$ & $0.084$ \\ 
		& $0.2$ & $800$ & $0.044$ & $0.050$ & $0$ & $0.312$ & $0.083$ \\  \cline{2-8}
		& $0.5$ & $200$ & $0.076$ & $0.074$ & $0$ & $0.998$ & $0.210$ \\ 
		& $0.5$ & $400$ & $0.052$ & $0.057$ & $0$ & $1$ & $0.311$ \\ 
		& $0.5$ & $800$ & $0.044$ & $0.051$ & $0$ & $1$ & $0.538$ \\  \cline{2-8}
		& $0.7$ & $200$ & $0.002$ & $0.103$ & $0$ & $1$ & $0.978$ \\ 
		& $0.7$ & $400$ & $0.060$ & $0.064$ & $0.0005$ & $1$ & $1.000$ \\ 
		& $0.7$ & $800$ & $0.046$ & $0.054$ & $0$ & $1$ & $1$ \\  \cline{2-8}
		& $$-$0.5$ & $200$ & $0.012$ & $0.022$ & $0$ & $0.370$ & $0.944$ \\ 
		& $$-$0.5$ & $400$ & $0.039$ & $0.032$ & $0$ & $0.210$ & $0.878$ \\ 
		& $$-$0.5$ & $800$ & $0.040$ & $0.040$ & $0$ & $0.150$ & $0.932$ \\ 
		\hline \\[-1.8ex] 
	\end{tabular} 
\end{scriptsize}
\end{table}

\begin{table}[t] \centering 
	\begin{scriptsize}
	\caption{Empirical Rejection Rates (in percentage) for Single Change Point Testing: Powers for Example \ref{exp:1} (i) with $p=2n$.} \label{tab:powerhdts}
	\label{} 
	\begin{tabular}{@{\extracolsep{0pt}} ccc|ccccc|ccccc} 
		\\[-1.8ex]\hline 
		\hline \\[-1.8ex] 
		&  &  &  \multicolumn{5}{c}{Case (i)}  &  \multicolumn{5}{c}{Case (ii)}   \\ 
		& \multirow{2}{*}{$\rho$} & \multirow{2}{*}{$n$} &  \multicolumn{2}{c}{$T_n$} & \multirow{2}{*}{DCBS} & \multicolumn{2}{c}{HH} &  \multicolumn{2}{c}{$T_n$} & \multirow{2}{*}{DCBS} & \multicolumn{2}{c}{HH} \\ \cline{4-5}  \cline{7-8}  \cline{9-10}  \cline{12-13}
		&  &  & $ \eta = 0.02 $ & $\eta = 0.05$ &    & $h=3$ & $h=6$ & $ \eta = 0.02 $ & $\eta = 0.05$ &    & $h=3$ & $h=6$ \\ 
		\hline \\[-1.8ex] 
		\multirow{12}{*}{(a)} & $0.2$ & $200$ & $0.728$ & $0.700$ & $0.058$ & $0.972$ & $0.956$ & $0.908$ & $0.878$ & $0.102$ & $0.998$ & $0.996$ \\ 
		& $0.2$ & $400$ & $1$ & $1$ & $0.906$ & $1$ & $1$ & $1$ & $1$ & $0.999$ & $1$ & $1$ \\ 
		& $0.2$ & $800$ & $1$ & $1$ & $1$ & $1$ & $1$ & $1$ & $1$ & $1$ & $1$ & $1$ \\   \cline{2-13}
		& $0.5$ & $200$ & $0.317$ & $0.252$ & $0$ & $1$ & $0.742$ & $0.392$ & $0.316$ & $0$ & $1$ & $0.835$ \\ 
		& $0.5$ & $400$ & $0.799$ & $0.774$ & $0$ & $1$ & $0.996$ & $0.967$ & $0.948$ & $0$ & $1$ & $1$ \\ 
		& $0.5$ & $800$ & $1$ & $1$ & $0.046$ & $1$ & $1$ & $1$ & $1$ & $0.337$ & $1$ & $1$ \\   \cline{2-13}
		& $0.7$ & $200$ & $0.020$ & $0.160$ & $0.0005$ & $1$ & $0.989$ & $0.023$ & $0.176$ & $0$ & $1$ & $0.994$ \\ 
		& $0.7$ & $400$ & $0.302$ & $0.237$ & $0.004$ & $1$ & $1$ & $0.390$ & $0.337$ & $0.005$ & $1$ & $1$ \\ 
		& $0.7$ & $800$ & $0.840$ & $0.813$ & $0$ & $1$ & $1$ & $0.970$ & $0.958$ & $0$ & $1$ & $1$ \\  \cline{2-13}
		& $$-$0.5$ & $200$ & $0.990$ & $1.000$ & $0$ & $1$ & $1$ & $1$ & $1$ & $0$ & $1$ & $1$ \\ 
		& $$-$0.5$ & $400$ & $1$ & $1$ & $0.188$ & $1$ & $1$  & $1$ & $1$ & $0.9995$ & $1$ & $1$ \\ 
		& $$-$0.5$ & $800$ & $1$ & $1$ & $1$ & $1$ & $1$ & $1$ & $1$ & $1$ & $1$ & $1$ \\ \hline
		\multirow{12}{*}{(b)} & $0.2$ & $200$ & $0.906$ & $0.888$ & $0.010$ & $0.991$ & $0.983$ & $0.983$ & $0.976$ & $0.064$ & $1$ & $0.999$ \\ 
		& $0.2$ & $400$ & $1$ & $1$ & $0.964$ & $1$ & $1$ & $1$ & $1$ & $1$ & $1$ & $1$ \\ 
		& $0.2$ & $800$ & $1$ & $1$ & $1$ & $1$ & $1$ & $1$ & $1$ & $1$ & $1$ & $1$  \\   \cline{2-13}
		& $0.5$ & $200$ & $0.409$ & $0.314$ & $0$ & $1$ & $0.687$ & $0.532$ & $0.456$ & $0$ & $1$ & $0.808$ \\ 
		& $0.5$ & $400$ & $0.954$ & $0.927$ & $0$ & $1$ & $1$ & $0.996$ & $0.994$ & $0.0065$ & $1$ & $1$ \\ 
		& $0.5$ & $800$ & $1$ & $1$ & $0.041$ & $1$ & $1$  & $1$ & $1$ & $0.5675$ & $1$ & $1$  \\   \cline{2-13}
		& $0.7$ & $200$ & $0.006$ & $0.178$ & $0$ & $1$ & $0.992$ & $0.006$ & $0.220$ & $0$ & $1$ & $0.996$ \\ 
		& $0.7$ & $400$ & $0.349$ & $0.328$ & $0.002$ & $1$ & $1$ & $0.489$ & $0.484$ & $0.005$ & $1$ & $1$ \\ 
		& $0.7$ & $800$ & $0.960$ & $0.946$ & $0$ & $1$ & $1$ & $0.995$ & $0.994$ & $0$ & $1$ & $1$ \\   \cline{2-13}
		& $$-$0.5$ & $200$ & $0.999$ & $1$ & $0$ & $1$ & $1.000$ & $1$ & $1$ & $0$ & $1$ & $1$ \\ 
		& $$-$0.5$ & $400$ & $1$ & $1$ & $0.524$ & $1$ & $1$ & $1$ & $1$ & $0.9955$ & $1$ & $1$ \\ 
		& $$-$0.5$ & $800$ & $1$ & $1$ & $1$ & $1$ & $1$ & $1$ & $1$ & $1$ & $1$ & $1$ \\ 
		\hline \\[-1.8ex] 
	\end{tabular} 
\end{scriptsize}
\end{table}

\clearpage
\newpage

\def\thesection{S\arabic{section}}


The supplementary material contains all the proofs for theoretical results  stated in the paper.
In particular, Section~\ref{sec:appendixHDID} contains all proofs for the results stated for high-dimensional independent data in Section~\ref{sec:theory}. Section~\ref{sec:appendixHDTS} collects the proofs for theory stated for high-dimensional time series in Section~\ref{sec:HDLP}. Section~\ref{sec:covsim} presents some simulation results for the covariance matrix change-point testing. Section~\ref{sec:WBSsim} presents simulation results for change-point estimation for both independent and dependent data.

\section{Proofs  for high-dimensional independent data}
\label{sec:appendixHDID}

Throughout this section, $X_t$, $t=1,\change{...},n$ are i.i.d. We begin by proving an intermediate technical result which provides the crucial ingredient for all subsequent developments. Define
\[
\widetilde S_n(k,m) = \sum_{i = k}^{m}\sum_{j = k}^{i} X_{i+1}^TX_j
\]
for any $1 \leq k < m \leq n$ and let $\widetilde S_n(k,m) = 0$ for $k\geq m$ or $k<1$ or $m>n$.

\begin{theorem}\label{weakS}
	Under Assumption \ref{ass}  we have as $n \to \infty$
	\[
	\Big\{ \frac{\sqrt{2}}{n\|\Sigma\|_F}\widetilde S_n(\floor{an}+1,\floor{bn}-1) \Big\}_{(a,b)\in [0,1]^2} \rightsquigarrow Q \quad in \quad \ell^\infty([0,1]^2),
	\]
	where $Q$ is a centered Gaussian process with covariance structure given by~\eqref{eq:covQ}. Moreover, the sample paths of $\frac{\sqrt{2}}{n\|\Sigma\|_F}\widetilde S_n(\floor{an}+1,\floor{bn}-1)$ are asymptotically uniformly equicontinuous in probability.
\end{theorem}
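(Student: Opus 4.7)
I will establish weak convergence in $\ell^\infty([0,1]^2)$ by combining (i) convergence of finite-dimensional marginals and (ii) asymptotic uniform equicontinuity in probability; together these will also deliver the second conclusion of the theorem. Both pieces will be reduced to martingale arguments via the following representation. After re-indexing,
\[
\widetilde S_n(k,m) \;=\; \sum_{k \leq j < i \leq m+1} X_i^T X_j \;=\; \sum_{i = k+1}^{m+1} X_i^T \Big(\sum_{j=k}^{i-1} X_j\Big),
\]
so any linear combination $L_n = \sum_{r=1}^R c_r \widetilde S_n(k_r,m_r)$ can be written as $L_n = \sum_i X_i^T M_i$, where $M_i := \sum_{j<i} w_{ij} X_j$ is $\mathcal{F}_{i-1}$-measurable with $\mathcal{F}_i := \sigma(X_1,\dots,X_i)$ and $w_{ij} := \sum_r c_r \1{k_r \leq j,\, i \leq m_r+1}$. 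The summands $\{X_i^T M_i\}$ then form a martingale difference array, so the entire problem reduces to a martingale limit problem.

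For finite-dimensional convergence I would apply the Cramer-Wold device followed by a martingale CLT. The conditional variance $\sum_i E[(X_i^T M_i)^2 \mid \mathcal{F}_{i-1}] = \sum_i M_i^T \Sigma M_i$ has expectation $\mathrm{tr}(\Sigma^2) \sum_{j<i} w_{ij}^2$ which, once normalised by $n^2\|\Sigma\|_F^2/2$, becomes exactly the bilinear form encoded in~\eqref{eq:covQ}. Concentration of this quadratic form around its mean amounts to a fourth-order variance computation in the entries of $X_0$, controlled by Assumption~\ref{cumulant} with $h \leq 4$ together with Assumption~\ref{order}. For the Lindeberg step, independence of $X_i$ from $M_i$ yields
\[
E\big[(X_i^T M_i)^4\big] = \sum_{a,b,c,d} \mathrm{cum}(X_{0,a},X_{0,b},X_{0,c},X_{0,d}) \, E[M_{i,a}M_{i,b}M_{i,c}M_{i,d}] + 3\, E\big[(M_i^T \Sigma M_i)^2\big],
\]
and both summands can again be bounded using Assumption~\ref{cumulant} with $h \leq 4$, giving the Lyapunov bound $\sum_i E[(X_i^T M_i)^4] = o((n\|\Sigma\|_F)^4)$. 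Evaluating the conditional-variance limit at arbitrary pairs $(a_r,b_r)$ reproduces exactly the covariance~\eqref{eq:covQ}.

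For asymptotic equicontinuity I would reduce, via the triangle inequality, to bounding increments in one coordinate at a time. For $k<k'$ and fixed $m$ one has the clean identity
\[
\widetilde S_n(k,m) - \widetilde S_n(k',m) \;=\; \sum_{k\leq j < k'} X_j^T \Big(\sum_{j < i \leq m+1} X_i\Big),
\]
and an analogous formula for $m$-increments. The key estimate is a fourth-moment bound of the form
\[
E\!\left[\big(\widetilde S_n(k,m) - \widetilde S_n(k',m)\big)^4\right] \;\leq\; C\, (k'-k)^2 n^2 \|\Sigma\|_F^4,
\]
derived by expanding the product into a sum over quadruples of index pairs and grouping the contributions according to which indices coincide; this step uses Assumption~\ref{cumulant} with $h$ up to $6$, since the mixed pairings produce cumulants of that order. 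Because $\widetilde S_n$ is piecewise constant in $(a,b)$, the supremum in the equicontinuity criterion reduces to a maximum over a grid of $O(n^2)$ points, and a dyadic chaining argument (equivalently a two-parameter Bickel--Wichura type inequality) fed by the moment bound delivers asymptotic uniform equicontinuity in probability. The main obstacle is producing the correct $(k'-k)^2$ scaling in the moment bound: a naive expansion of $E[(\cdot)^4]$ yields only $(k'-k)^3$, and recovering the sharper factor requires careful cancellations among the index pairings together with the cumulant bounds from Assumption~\ref{cumulant}; this is precisely where our process-level convergence requires work beyond the pointwise CLT of \cite{chen2010two}.
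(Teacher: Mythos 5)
Your finite-dimensional argument is essentially the one in the paper: after Cram\'er--Wold, the linear combination is a martingale difference array, the conditional variance concentrates around the bilinear form in \eqref{eq:covQ} using \ref{cumulant} with $h\le 4$ for the ``diagonal'' part and \ref{order} (via $tr(\Sigma^4)=o(\|\Sigma\|_F^4)$) for the off-diagonal part, and the Lindeberg condition follows from a fourth-moment/Lyapunov bound. That half is fine.

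The tightness half has a genuine gap. Your ``key estimate'' $\E\big[(\widetilde S_n(k,m)-\widetilde S_n(k',m))^4\big]\le C (k'-k)^2 n^2\|\Sigma\|_F^4$ normalizes to $\E[|B_n(u)-B_n(v)|^4]\lesssim \|u-v\|^2$, which is exactly the critical Kolmogorov--Chentsov exponent for a process indexed by the \emph{two}-dimensional set $[0,1]^2$: with the metric $d(u,v)=\|u-v\|^{1/2}$ the packing numbers satisfy $D(\epsilon,d)\asymp\epsilon^{-4}$, so the chaining entropy integral $\int_0^\eta (D(\epsilon,d))^{1/4}\,d\epsilon=\int_0^\eta \epsilon^{-1}\,d\epsilon$ diverges, and no amount of care about the constant $(k'-k)^2$ versus $(k'-k)^3$ rescues this --- fourth moments of point differences simply cannot give equicontinuity over a two-parameter index set at Gaussian scaling. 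This is precisely why the paper proves the \emph{sixth}-moment increment bound \eqref{eq:momb}, $\E\big[n^{-6}\|\Sigma\|_F^{-6}(S_n(a,b)-S_n(c,d))^6\big]\lesssim \|(a,b)-(c,d)\|_2^{3}+n^{-3}$ (exponent $3=2+1$, strictly supercritical), via Lemma~\ref{lem:Xi1Xi6}, and then chains with $\Psi(x)=x^6$ so that $(D(\epsilon,d))^{1/6}=\epsilon^{-2/3}$ is integrable; this is also the reason Assumption~\ref{cumulant} is imposed for $h$ up to $6$ rather than $4$. Your parenthetical appeal to a Bickel--Wichura block-increment criterion is not an equivalent reformulation of the point-difference bound: the rectangular increments of $\widetilde S_n$ do have product structure $\sum_{j\in J}\sum_{i\in I}X_i^TX_j$ with fourth moments of order $(|I||J|)^2\|\Sigma\|_F^4$, which would satisfy the $\beta_1+\beta_2>1$ condition, but that route additionally requires controlling the one-parameter edge processes (the process does not vanish on the boundary of the square) and upgrading tightness in $D([0,1]^2)$ to the asymptotic uniform equicontinuity claimed in the theorem; none of this is worked out. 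As written, the tightness step would fail, and fixing it requires moving to sixth moments (or a fully executed block-increment argument), not a sharper fourth-moment constant.
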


The proof of this Theorem is long and technical. We postpone it to Section~\ref{sec:proofSnwean}.

Next we present some basic results that will be used throughout the following proofs. Define
\begin{equation}\label{eq:DnX}
	D^X(k;\ell,m) := \sum_{\stackrel{\ell \leq j_1, j_3 \leq k}{j_3 \neq j_1}}\sum_{\stackrel{k+1 \leq j_2, j_4 \leq m}{j_2 \neq j_4}} (X_{j_1}-X_{j_2})^T(X_{j_3}-X_{j_4}),
\end{equation}
for $1 \leq \ell \leq k < m \leq n$ and $D^X(k;\ell,m) = 0$ otherwise. Observe that under the null of constant mean function we have $D^X \equiv D$ and we always have the representation
\begin{align}
	D^X(k;\ell,m) =~& 2(m-k)(m-k-1) \widetilde S_n(\ell,k) + 2 (k-\ell)(k-\ell+1)\widetilde S_n(k+1,m) \notag
	\\
	& - 2(k-\ell+1)(m-k) (\widetilde S_n(\ell,m) - \widetilde S_n(\ell,k) - \widetilde S_n(k+1,m)). \label{repr:DXtildeS}
\end{align}
Theorem~\ref{weakS} and uniform asymptotic equi-continuity of the sample paths of $S$ in probability together with some simple calculations yields
\begin{align} \label{eq:weakHn}
	\{ H_n^X(r,a,b)\}_{(a,b,r) \in [0,1]^3} := \Big\{ \frac{\sqrt{2}}{n^3\|\Sigma\|_F} D^X(\floor{rn} ;\floor{an},\floor{bn}) \Big\}_{(a,b,r) \in [0,1]^3} \weak 2 G
\end{align}
in $\ell^\infty([0,1]^3)$ where for $0 \leq a < b < r \leq 1$
\begin{align*}
	G(r;a,b) &= (b-r)^2Q(a,r) + (r-a)^2Q(r,b) - (r-a)(b-r)(Q(a,b) - Q(a,r) - Q(r,b))
	\\
	&= (b-r)(b-a)Q(a,r) + (r-a)(b-a)Q(r,b) - (r-a)(b-r)Q(a,b),
\end{align*}
and $G(r;a,b) = 0$ otherwise. Note that this is the process $G$ appearing in~\eqref{eq:defGrab}. Since the sample paths of $Q$ are uniformly continuous with respect to the Euclidean metric on $[0,1]^2$, a simple computation shows that the sample paths of $G$ are uniformly continuous with respect to the Euclidean metric on $[0,1]^3$.


\subsection{Proof of Theorem~\ref{cor:T0}}

For $n \geq 1$ consider the maps
\[
\Phi_n(f) := \sup_{k=2,...,n-3} \frac{f(k/n,1/n,1)^2}{\frac{1}{n}\Big(\sum_{t=\ell+1}^{k-2} f(t/n,\ell/n,k/n)^2 + \sum_{t=k+2}^{m-2} f(t/n,k/n,m/n)^2 \Big)}
\]
defined for functions $f: [0,1]^3 \to \R$ such that the denominator is non-zero. With this definition we have $T_n = \Phi_n(H_n^X)$ for the process $H_n^X$ defined in~\eqref{eq:weakHn}. Let $D_\Phi$ denote the set of all continuous functions $f$ in $\ell^\infty([0,1]^3)$ with the property $\inf_{r \in [0,1]} \int_0^r f(u,0,r)^2 du  + \int_r^1 f(u,r,1)^2 du  > 0$ and consider the map $\Phi: D_\Phi \to \R$ given by
\[
\Phi(f) = \sup_{r \in [0,1]} \frac{f(r,0,1)^2}{\int_0^r f(u,0,r)^2 du  + \int_r^1 f(u,r,1)^2 du}.
\]
Straightforward arguments show that for any sequence of functions $f_n$ with $\|f_n-f\|_\infty = o(1)$ for some function $f \in D_\Phi$ we have $\Phi_n(f_n) \to \Phi(f)$. Observe that
\[
P\Big(\inf_r \int_0^r G(u,0,r)^2 du  + \int_r^1 G(u,r,1)^2 du = 0 \Big) = 0.
\]
This follows from continuity of the sample paths of $G$, the fact that $P(G(u,r,1)^2>0)=1$ for any $0<r<u<1$ and $P(G(u,0,r)^2>0)=1$ for any $0<u<r<1$. Hence $2G \in D_\Phi$ with probability one. Combined with the fact that $H_n^X \weak 2G$ and the extended continuous mapping theorem (see Theorem~1.11.1 in~\cite{vdVW}) this implies $T_n = \Phi_n(H_n^X) \weak \Phi(G) = T$. This completes the proof.  \hfill $\Box$

\subsection{Proof of Theorem~\ref{cor:T_alt}}

A key step in the proof of this Theorem is an expansion for $D$ from~\eqref{eq:defDn} in terms of $D^X$ from~\eqref{eq:DnX} in the setting where $\E[Y_i] = \mu$ for $i=1,...,\floor{nb^*}$ and $\E[Y_i] = \mu + \delta_n$ for $i=\floor{nb^*}+1,...,n$. To shorten notation let $k^* := \floor{nb^*}$. We will only provide a detailed derivation in the case $\ell < k < k^* < m$, all other cases can be handled similarly. Observe that
\begin{align*}
	D(k;\ell,m) = \sum_{j_1=\ell}^{k}\sum_{j_3  = \ell, j_3 \neq j_1}^{k} \Big\{ &\sum_{j_2=k+1}^{k^*}\sum_{j_4= k+1,j_4 \neq j_2}^{k^*}(X_{j_1}-X_{j_2})^T(X_{j_3}-X_{j_4})
	\\
	&+ \sum_{j_2=k+1}^{k^*}\sum_{j_4= k^*+1}^{m}(X_{j_1}-X_{j_2})^T(X_{j_3}-X_{j_4} - \delta_n)
	\\
	&+ \sum_{j_2=k^*+1}^{m}\sum_{j_4= k+1}^{k^*}(X_{j_1}-X_{j_2}- \delta_n)^T(X_{j_3}-X_{j_4})
	\\
	&+ \sum_{j_2=k^*+1}^{m}\sum_{j_4= k^*+1,j_4 \neq j_2}^{m}(X_{j_1}-X_{j_2} - \delta_n)^T(X_{j_3}-X_{j_4} - \delta_n) \Big\}.
\end{align*}
Now some straightforward algebraic manipulations show that
\begin{align*}
	D(k;\ell,m) =~& D^X(k;\ell,m) + (k-\ell+1)(k-\ell)(m-k^*)(m-k^*-1)\|\delta_n\|_2^2
	\\
	& - 2(k-\ell)(m-k^*)(m-k-2) \sum_{j=\ell}^{k} X_j^T\delta_n
	\\
	& - 4 (k-\ell)(k-\ell-1)(m-k^*) \sum_{j=k+1}^{k^*} X_j^T\delta_n.
\end{align*}
Let $s_n(k) := \sum_{j=1}^k X_j^T\delta_n$. Then
\[
\sup_{1\leq\ell\leq k\leq n} \Big| \sum_{j=\ell}^{k} X_j^T\delta_n \Big| \leq 2 \sup_{1 \leq k \leq n} |s_n(k)|.
\]
Observing that $s_n$ is a sum of centered i.i.d. random variables, Kolmogorov's inequality implies
{\[
	\sup_{1 \leq k \leq n} |s_n(k)| = O_P\Big( (n Var(X_1^T\delta_n ))^{1/2} \Big) = O_P\Big( \Big(n \delta_n^T\Sigma\delta_n\Big)^{1/2} \Big) 
	= o_P(n^{1/2}\|\delta_n\|\|\Sigma\|_F^{1/2})
	\]
	where we used the fact $\|\Sigma_n\|_2 = o(\|\Sigma_n\|_F)$, see Remark~\ref{rem:condA}.
	This implies that, uniformly in $k,\ell,m$ we have for $\ell<k<k^*<m$
	\[
	D(k;\ell,m) = D^X(k;\ell,m) + (k-\ell+1)(k-\ell)(m-k^*)(m-k^*-1)\|\delta_n\|_2^2 +  o_P( n^{7/2} \|\delta_n\|\|\Sigma\|_F^{1/2})
	\]
	Similar arguments show that for $\ell<k^*<k<m$
	\[
	D(k;\ell,m) = D^X(k;\ell,m) + (k^*-\ell+1)(k^*-\ell)(m-k)(m-k-1)\|\delta_n\|_2^2 + o_P( n^{7/2} \|\delta_n\|\|\Sigma\|_F^{1/2})
	\]
	while for $k^* \leq \ell$ or $k^* \geq m$ we have
	\[
	D(k;\ell,m) = D^X(k;\ell,m).
	\]
	Now assuming that $n\|\delta_n\|_2^2/\|\Sigma\|_F \to c^2 \in [0,\infty)$, and hence $n^{7/2} \|\delta_n\|\|\Sigma\|_F^{1/2} = O(n^3 \|\Sigma\|_F)$,  
	it follows that
	\begin{align} \label{eq:weakHn_alt}
		\Big\{ \frac{\sqrt{2}}{n^3\|\Sigma\|_F} D(\floor{rn} ;\floor{an},\floor{bn}) \Big\}_{(a,b,r) \in [0,1]^3} \weak \Big\{2 G(r,a,b) + \sqrt{2} c\Delta(r,a,b)\Big\}_{(a,b,r) \in [0,1]^3}
	\end{align}
	where
	\[
	\Delta(r,a,b) := \begin{cases}
		(b^*-a)^2(b-r)^2 & a < b^* \leq r < b,
		\\
		(r-a)^2(b-b^*)^2 & a < r < b^* < b,
		\\
		0 & b^* < a \mbox{ or } b^* > b.
	\end{cases}
	\]
	The remaining proof in the case $n\|\delta_n\|_2^2/\|\Sigma\|_F \to c^2 \in [0,\infty)$ follows by exactly the same arguments as given in the proof of Theorem~\ref{cor:T0} after replacing~\eqref{eq:weakHn} by~\eqref{eq:weakHn_alt} and the limit $2G$ by $2G + \sqrt{2}c\Delta$.

	Next consider the case $n\|\delta_n\|_2^2/\|\Sigma\|_F \to \infty$. Observe that
	\[
	T_n = \sup_{k=2,...,n-3} \frac{(D_n(k;1,n))^2}{W_n(k;1,n)} \geq \frac{(D_n(\floor{b^*n};1,n))^2}{W_n(\floor{b^*n};1,n)}.
	\]
	Since by assumption $\eta_i$ are constant for $i=1,...,\floor{b^*n}$ and $i=\floor{b^*n}+1,...,n$, respectively, we have
	\begin{equation}\label{eq:WnHn}
		\frac{1}{n^6\|\Sigma\|_F^2}W_n(\floor{b^*n};1,n) = \frac{1}{n}\Big[\sum_{t=1}^{\floor{b^*n}-2} H_n^X\Big(\frac{t}{n},\frac{1}{n},\frac{\floor{b^*n}}{n}\Big)^2 + \sum_{t=\floor{b^*n}+2}^{n-2} H_n^X\Big(\frac{t}{n},\frac{\floor{b^*n}}{n},1\Big)^2 \Big]
	\end{equation}
	for $H_n^X$ defined in~\eqref{eq:weakHn}. Uniform asymptotic equicontinuity of the sample paths of $H_n^X$ together with similar arguments as given in the proof of Theorem~\ref{cor:T0} implies that
	\begin{equation}\label{eq:Wnweak}
		\frac{1}{n^6\|\Sigma\|_F^2}W_n(\floor{b^*n};1,n) \weak \int_0^{b^*} G(u,0,b^*)^2 du  + \int_{b^*}^1 G(u,b^*,1)^2 du,
	\end{equation}
	where the limit is non-zero and finite almost surely.
	Next we will analyze the numerator. From the expansions given above we obtain
	\begin{align*}
		\frac{D_n(k^*;1,n)}{n^3 \|\Sigma\|_F} &= H_n^X(k^*/n;1/n,1) + (b^*)^2(1-b^*)^2\frac{n\|\delta_n\|_2^2}{\|\Sigma\|_F}(1+o(1)) +
		o_P\Big(\frac{n^{1/2}\|\delta_n\|}{\|\Sigma\|_F^{1/2}}\Big)
		\\
		& = O_P(1) + (b^*)^2(1-b^*)^2\frac{n\|\delta_n\|_2^2}{\|\Sigma\|_F}(1+o(1)) +
		o_P\Big(\frac{n\|\delta_n\|^2}{\|\Sigma\|_F}\Big).
	\end{align*}
	This implies that $\frac{D_n(k^*;1,n)}{n^3 \|\Sigma\|_F} \to \infty$ in probability. Combined with~\eqref{eq:Wnweak} and the fact that the limit in~\eqref{eq:Wnweak} is finite almost surely, the convergence $T_n \to \infty$ in probability follows. This completes the proof of Theorem~\ref{cor:T_alt}. \hfill $\Box$

	\subsection{Proof of Theorem~\ref{th:multcp0}}
	The proof is similar to the proof of Theorem~\ref{cor:T0}, and the proofs of the weak convergence of $T_n^*, T_n^\diamond$ are also similar to each other. For the sake of brevity we provide a brief outline for $T_n^*$ and omit all other details. Define the maps
	\begin{multline*}
		\Phi_n^*(f) := \max_{(r_1,r_2)\in \Omega_n(\epsilon)} \frac{f(r_1/n,1/n,r_2/n)^2}{\frac{1}{n}\Big(\sum_{t=1}^{r_1-2} f(t/n,1/n,r_1/n)^2 + \sum_{t=r_1+2}^{r_2-2} f(t/n,r_1/n,r_2/n)^2 \Big)}
		\\
		+ \max_{(s_1,s_2)\in \Omega_n(\epsilon)} \frac{f(s_2/n,s_1/n,1)^2}{\frac{1}{n}\Big(\sum_{t=s_1+1}^{s_2-2} f(t/n,s_1/n,s_2/n)^2 + \sum_{t=s_2+2}^{n-2} f(t/n,s_2/n,1)^2 \Big)}
	\end{multline*}
	for all $f$ for which the expression is well-defined and $\Phi^*: D_{\Phi^*}\to\R$
	\begin{multline*}
		\Phi^*(f) := \sup_{(r_1,r_2)\in \Omega(\epsilon)} \frac{f(r_1;0,r_2)^2}{\int_{0}^{r_1}f(u;0,r_1)^2du + \int_{r_1}^{r_2}f(u;r_1,r_2)^2du}
		\\
		+ \sup_{(s_1,s_2)\in \Omega(\epsilon)} \frac{f(s_2;s_1,1)^2}{\int_{s_1}^{s_2}f(u;s_1,s_2)^2du + \int_{s_2}^{1}f(u;s_2,1)^2du}.
	\end{multline*}
	where $D_{\Phi^*}$ denotes the set of all continuous functions such that all denominators in the fraction above are non-zero.  Similarly to the proof of Theorem~\ref{cor:T0} we have $P(2G \in D_{\Phi^*}) = 1, P(2G \in D_{\Phi^\diamond}) = 1$, and straightforward calculations show that all other conditions of the extended continuous mapping theorem are also satisfied. \hfill $\Box$
	
	\subsection{Proof of Theorem~\ref{th:multcp1}}
	
	We begin by proving the statement about $T^*_n$. Define $\delta_k := \mu_{k+1}^* - \mu_{k}^*$. Let $k_0 = k_{0,n}$ be a sequence such that 
	\[
	n \|\delta_{k_0} \|^2/\|\Sigma\|_F \to \infty
	\] 
	and 
	\[
	\limsup_{n \to \infty} \max_{k < k_0} n \| \delta_{k} \|^2/\|\Sigma\|_F < \infty.
	\]
	Further let $r_k^* := \lfloor n b_k^* \rfloor$, $k=0,...,M$. By assumption $(r_{k_0}^*,r_{k_0+1}^*) \in \Omega_n(\epsilon)$ (for $n$ sufficiently large, where 'sufficiently large' depends on $\epsilon,b_{1}^*,...,b_M^*$ only). Thus for sufficiently large $n$
	\[
	T^*_n \geq \frac{D_n(r_{k_0}^*;1,r_{k_0+1}^*)^2}{W_n(r_{k_0}^*;1,r_{k_0+1}^*)} \quad a.s.
	\]
	Recall the definition of $D^X$ in~\eqref{eq:DnX} and observe that for all $k ,\ell, m$
	\begin{align*}
		D_n(k;\ell,m) = & D^X(k;\ell,m) + \sum_{\stackrel{\ell \leq j_1, j_3 \leq k}{j_3 \neq j_1}}\sum_{\stackrel{k+1 \leq j_2, j_4 \leq m}{j_2 \neq j_4}}
		(X_{j_1}-X_{j_2})^T(\mu_{j_3}-\mu_{j_4})
		\\
		& + \sum_{\stackrel{\ell \leq j_1, j_3 \leq k}{j_3 \neq j_1}}\sum_{\stackrel{k+1 \leq j_2, j_4 \leq m}{j_2 \neq j_4}}
		(\mu_{j_1}-\mu_{j_2})^T(X_{j_3}-X_{j_4})
		\\
		&+ \sum_{\stackrel{\ell \leq j_1, j_3 \leq k}{j_3 \neq j_1}}\sum_{\stackrel{k+1 \leq j_2, j_4 \leq m}{j_2 \neq j_4}}
		(\mu_{j_1}-\mu_{j_2})^T(\mu_{j_3}-\mu_{j_4})
		\\
		=: & D^X(k;\ell,m) + A_{1,n}(k;\ell,m) + A_{2,n}(k;\ell,m) + A_{3,n}(k;\ell,m).
	\end{align*}
	Similar arguments as in the proof of Theorem~\ref{cor:T_alt} show that 
	\begin{multline*}
		\sup_{1 \leq k,\ell, m \leq r_{j+1}^*} |A_{1,n}(k;\ell,m)| + |A_{2,n}(k;\ell,m)|
		\\
		= O_P(n^{7/2} \max_{k \leq j} (\delta_{k}^T \Sigma \delta_{k})^{1/2}) = o_P(n^{7/2} \max_{k \leq j} \|\delta_k\| \|\Sigma\|_F^{1/2} )
	\end{multline*}
	since $\delta_{k}^T \Sigma \delta_{k} \leq \|\Sigma\|_2 \|\delta_k\|^2 = o(\|\Sigma\|_F \|\delta_k\|^2)$. 
	Moreover, straightforward calculations show that
	\begin{align*}
		\sup_{r_{k_0}^*+1 \leq k, \ell, m \leq r_{k_0+1}^*} A_{3,n}(k;\ell,m) &= 0,
		\\
		\sup_{1 \leq k, \ell, m \leq r_{k_0}^*} A_{3,n}(k;\ell,m) & = O(n^4 \max_{k < k_0} \|\delta_k\|^2),
		\\
		A_{3,n}(r_{k_0}^*;1,r_{k_0+1}^*) &= n^4 (b_{k_0}^*)^2(b_{k_0+1}^* - b_{k_0}^*)^2\|\delta_{k_0}\|^2(1+o(1))
	\end{align*}
	where we used the fact that by definition of $k_0$ one has $\max_{k < k_0} \|\delta_{k}\| / \|\delta_{k_0}\| = o(1)$ for the last representation.
	Combining the findings above with process convergence of $D_n^X(r,a,b)/(n^3 \|\Sigma\|_F)$ indexed in $a,b,r \in [0,1]$ it follows that
	\begin{align*}
		\frac{D_n(r_{k_0}^*;1,r_{k_0+1}^*)}{n^3 \|\Sigma\|_F} =&  \frac{n (b_{k_0}^*)^2(b_{k_0+1}^* - b_{k_0}^*)^2\|\delta_{k_0}\|^2}{\|\Sigma\|_F}(1+o(1))
		\\
		& + o_P\Big( \frac{n^{1/2} \max_{k \leq k_0} \|\delta_k\|} {\|\Sigma\|_F^{1/2}} \Big) + O_P(1)
	\end{align*}
	which converges to $+\infty$ in probability under the assumptions made since by construction $\max_{k < k_0} \|\delta_k\| = o(\|\delta_{k_0}\|)$. Moreover, 
	\[
	\frac{W_n(r_{k_0}^*;1,r_{k_0+1}^*)}{n^6 \|\Sigma\|_F^2} =  o_P\Big(n \frac{\max_{k < k_0} \|\delta_k\|^2}{\|\Sigma\|_F} \Big) + O_P(1) + O_P\Big(n ^2 \frac{\max_{k < k_0} \|\delta_k\|^4}{\|\Sigma\|_F^2} \Big) = O_P(1)
	\]
	where we used that by construction $\max_{k<k_0} \|\delta_k\|^2 = O(n^{-1}\|\Sigma\|_F^{-1})$. Combining the above expansions for $D_n(r_{k_0}^*;1,r_{k_0+1}^*), W_n(r_{k_0}^*;1,r_{k_0+1}^*)$ it follows that
	\[ \frac{D_n(r_{k_0}^*;1,r_{k_0+1}^*)^2}{W_n(r_{k_0}^*;1,r_{k_0+1}^*)} \stackrel{P}{\to} \infty.
	\]
	This proves the claim for $T^*_n$. To prove the claim for $T^\diamond_n$, consider $k_0$ as above and define $r^* := (\ceil{2b_{k_0}^*/\epsilon} + 1)\epsilon/2$. Note that by construction $r^*\epsilon/2 \in \mathcal{G}_\epsilon$ and
	\[
	b_{k_0}^* + \epsilon/2 \leq r^*\epsilon/2 \leq b_{k_0}^* + \epsilon < b_{k_0+1}^*.
	\]
	Hence for $n$ sufficiently large $(\floor{r^*n},\floor{nk^*\epsilon/2}) \in \mathcal{G}_{\epsilon,n,f}$ and thus (for sufficiently large $n$)
	\[
	T^\diamond_n \geq \frac{D_n(\floor{nb_{k_0}^*};1,\floor{nr^*\epsilon/2})^2}{W_n(\floor{nb_{k_0}^*};1,\floor{nr^*\epsilon/2})} \quad a.s.
	\]
	From here on the arguments are very similar to the ones for $T^*_n$ and details are omitted for the sake of brevity.
	\hfill $\Box$

	\subsection{Proofs for Remark~\ref{rem:condA} and Remark~\ref{rem:condB}} \label{sec:proofA1A3}
	
	For the equivalence between \ref{order} and $\|\Sigma_n\|_2 = o(\|\Sigma_n\|_F)$ denote by $\lambda_1 \geq \lambda_2 \geq \cdots \geq \lambda_p \geq 0$ the ordered eigenvalues of $\Sigma_n$. Then 
	\[
	tr(\Sigma_n^4) = \sum_i \lambda_i^4 \leq \lambda_1^2 \sum_i \lambda_i^2 = \|\Sigma_n\|_2^2\|\Sigma_n\|_F^2
	\] 
	so $\|\Sigma_n\|_2 = o(\|\Sigma_n\|_F)$ implies \ref{order}. We also have
	\[ 
	\|\Sigma_n\|_2^4 = \lambda_1^4 \leq \sum_i \lambda_i^4 =tr(\Sigma_n^4),
	\]
	so \ref{order} implies $\|\Sigma_n\|_2 = o(\|\Sigma_n\|_F)$.
	For the remaining part of Remark \ref{rem:condA}, observe that
	\begin{equation}\label{eq:Frobbound}
		\|\Sigma_n\|_F \geq (\sum_{j=1}^p \Sigma_n(j,j))^{1/2} \geq c_0^{1/2}p_n^{1/2}.
	\end{equation}
	by (i). We also have by symmetry of $\Sigma_n$ and by (ii) since $\Sigma_n(i,j) = cum(X_{0,i,n},X_{0,j,n})$
	\begin{align*}
		\|\Sigma_n\|_2^2 &\leq \|\Sigma_n\|_1\|\Sigma_n\|_\infty = \|\Sigma_n\|_1^2= \max_{1\leq j \leq p_n} \left\{\sum_{i=1}^{p_n} |\Sigma_n(i,j)|\right\}^2 \\
		& \leq \max_{1\leq j \leq p_n} \left\{\sum_{i=1}^{p_n} C_2(1\vee|i-j|^{-r})\right\}^2  = O(1)
	\end{align*}
	where the last bound follows since $r>1$. Since $p_n \to \infty$, this combined with~\eqref{eq:Frobbound} shows \ref{order} by using the inequality
	\[
	tr(\Sigma^4) = \|\Sigma^2\|_F^2 \leq \|\Sigma\|_2^2\|\Sigma\|_F^2.
	\]
	For~\ref{cumulant} note that for $2 \leq h \leq 6$ we have by (ii)
	\begin{align*}
		\sum_{l_1,\change{...},l_h = 1}^{p}cum^2(X_{0,l_1,n},\change{...},X_{0,l_h,n}) &= \sum_{l_1 = 1}^{p_n} \sum_{m = 0}^{p_n} \sum_{l_2,...,l_h \in S_{m,h}(l_1)} cum^2(X_{0,l_1,n},\change{...},X_{0,l_h,n})
		\\
		&\leq \sum_{l_1 = 1}^{p_n} \sum_{m = 0}^{p_n} |S_{m,h}(l_1)| C_h^2 (1 \vee m)^{-2r}
		\\
		&\lesssim p_n \sum_{m = 0}^{p_n} (1 \vee m)^{h-2-2r},
	\end{align*}
	where
	\[
	S_{m,h}(l_1) := \{1\leq l_2,...,l_h \leq p_n: \max_{1 \leq i,j \leq h} |l_i - l_j| = m\}.
	\]
	Now the sum is of order $O(p_n^{h-1-2r})$ if $h - 2 - 2r > -1$ and of order $O(1)$ if $h - 2 - 2r < -1$. Now a simple computation shows that~\eqref{cumulant} is satisfied if $h-2r < h/2$ for $h=2,...,6$, which is equivalent to $r > 6/4 =3/2$.
	
	For Remark \ref{rem:condB}, all arguments are similar to the proof of Remark \ref{rem:condA} but the verification for assumption \ref{cov:cum1}. Consider
	
	\begin{align*}
		&\max_{l_1,l_2 = 1,\change{...},p_n}\sum_{l_3,l_4 = 1}^{p_n}|cum(X_{0,l_1,n},X_{0,l_2,n},X_{0,l_3,n},X_{0,l_4,n})| \\
		&= \max_{l_1,l_2 = 1,\change{...},p_n}\sum_{m = |l_1-l_2|}^{p_n}\sum_{l_3,l_4 \in S_{m,4}(l_1,l_2)}|cum(X_{0,l_1,n},\change{...},X_{0,l_4,n})|\\
		&\leq \max_{l_1,l_2 = 1,\change{...},p_n}\sum_{m = |l_1-l_2|+1}^{p_n}|S_{m,4}(l_1,l_2)|C_4(1 \vee m)^{-r}) + (|l_2-l_1+1|)^2C_4(1\vee |l_1-l_2| )^{-r}\\
		&\lesssim \sum_{m = 0}^{p_n} C_4(1 \vee m)^{1-r} + O(1)< \infty
	\end{align*}
	where the last line uses the fact that $r>2$,
	$$S_{m,4}(l_1,l_2) := \{1 \leq l_3,l_4 \leq p_n: \max_{1\leq i,j\leq4}|l_i- l_j| = m  \}$$
	and
	$|S_{m,4}(l_1,l_2)| = O(m\vee 1)$ whenever $m > |l_1-l_2|$. This completes the proof.
	\hfill $\Box$
	
	\subsection{Proof of Remark \ref{rmk:cq}} \label{pr:rmk:cq}{ We begin by introducing the following proposition.
		\begin{proposition} \label{prop:cumcq}
			Assume the model $X_{t} = \Gamma Z_t$, where $\Gamma$ is $p$-by-$m$ real matrix such that $\Sigma = \Gamma\Gamma^T$, and $Z_t's$ are i.i.d random $m$-dimensional vectors with $\E[Z_t] = 0$ and $Var(Z_t) = I_m$  Furthermore for any $t > 0$,
			\begin{equation}\label{eq:momentsZ}
				\E[Z_{t,l_1}^{\alpha_1}\cdots Z_{t,l_q}^{\alpha_q}] = \E[Z_{t,l_1}^{\alpha_1}]\cdots\E[Z_{t,l_q}^{\alpha_q}]
			\end{equation}
			for any positive integer $q$ such that $\sum_{l = 1}^{q}\alpha_{q} \leq Q$, where $Q$ is a fixed positive constant, and $l_1 \neq \cdots \neq l_q$. Then for any $j_1,\change{...},j_k = 1,\change{...},p $
			\begin{equation}
				cum(X_{t,j_1},\change{...},X_{t,j_k}) = \sum_{l = 1}^{m}\Gamma_{j_1,l}\Gamma_{j_2,l}\cdots\Gamma_{j_k,l}cum_k(Z_{t,l})
			\end{equation}
			for any $1\leq k \leq Q$, where $cum_k(Z_{t,l})$ denotes the joint cumulants of $k$ identical random variables $Z_{t,l}$.
		\end{proposition}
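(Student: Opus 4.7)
The plan is to expand using multilinearity of joint cumulants, thereby reducing everything to joint cumulants of the $Z_{t,l}$'s, and then to show that these joint cumulants vanish unless all arguments share a common index. The key technical point is to convert the moment-factorization condition~\eqref{eq:momentsZ} into a statement about cumulants, which does not follow from a plug-and-play application of the standard ``cumulants of independent blocks vanish'' theorem because~\eqref{eq:momentsZ} only asserts factorization of moments up to total order $Q$ rather than genuine independence.

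First, writing $X_{t,j_i} = \sum_{l_i=1}^{m}\Gamma_{j_i,l_i}Z_{t,l_i}$ and using the multilinearity of joint cumulants in each argument, I obtain
\[
cum(X_{t,j_1},\dots,X_{t,j_k}) = \sum_{l_1,\dots,l_k=1}^{m}\Gamma_{j_1,l_1}\cdots\Gamma_{j_k,l_k}\, cum(Z_{t,l_1},\dots,Z_{t,l_k}).
\]
Comparing with the target identity, the proposition reduces to the single claim that $cum(Z_{t,l_1},\dots,Z_{t,l_k}) = 0$ whenever the indices $l_1,\dots,l_k$ are not all equal; in the remaining diagonal case $l_1=\cdots=l_k=l$, the joint cumulant is by definition the ordinary $k$-th cumulant $cum_k(Z_{t,l})$.

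Second, I would prove the vanishing claim using the Leonov--Shiryaev formula expressing joint cumulants as an alternating sum of products of joint moments,
\[
cum(Z_{t,l_1},\dots,Z_{t,l_k}) = \sum_{\pi \in \mathcal{P}(k)} (-1)^{|\pi|-1}(|\pi|-1)!\prod_{B\in\pi}\E\Big[\prod_{i\in B}Z_{t,l_i}\Big],
\]
where $\mathcal{P}(k)$ denotes the partitions of $\{1,\dots,k\}$ and $|\pi|$ the number of blocks. Suppose the $l_i$ are not all equal. Pick a value $a$ occurring among them, set $I=\{i:l_i=a\}$ and $J=\{i:l_i\neq a\}$, both non-empty. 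For any block $B\in\pi$ the multiset $\{l_i:i\in B\}$ splits into a part supported on $\{a\}$ (from $B\cap I$) and a part whose distinct index values avoid $a$ (from $B\cap J$); since the total degree of every such joint moment is at most $k\leq Q$, assumption~\eqref{eq:momentsZ} yields
\[
\E\Big[\prod_{i\in B}Z_{t,l_i}\Big] = \E\Big[\prod_{i\in B\cap I}Z_{t,l_i}\Big]\,\E\Big[\prod_{i\in B\cap J}Z_{t,l_i}\Big].
\]
Substituting this factorization into the Leonov--Shiryaev expansion and regrouping the sum over partitions refining the coarse partition $\{I,J\}$ produces exactly the standard algebraic identity that makes $cum(Z_{t,l_1},\dots,Z_{t,l_k})$ vanish when the two blocks are ``independent''. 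This then gives the desired identity.

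The hard part will be this regrouping step: because~\eqref{eq:momentsZ} is only a moment statement, I cannot invoke independence of $\sigma$-fields directly, so the cancellation has to be carried out purely algebraically inside the partition lattice of $\{1,\dots,k\}$. The bookkeeping works out because the Leonov--Shiryaev expansion depends only on joint moments of total order $k\leq Q$, which is precisely the range in which~\eqref{eq:momentsZ} applies, and the alternating sum over partitions that refine $\{I,J\}$ collapses exactly as in the fully independent setting.
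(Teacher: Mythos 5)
Your proposal is correct and follows the same overall structure as the paper's proof: both expand $cum(X_{t,j_1},\dots,X_{t,j_k})$ by multilinearity and reduce the claim to showing that $cum(Z_{t,l_1},\dots,Z_{t,l_k})=0$ whenever the indices are not all equal, with only joint moments of total order at most $k\leq Q$ entering. Where you diverge is in how that vanishing is justified. You propose to carry out the cancellation directly in the Leonov--Shiryaev expansion, using~\eqref{eq:momentsZ} to factor each block moment across $I=\{i:l_i=a\}$ and $J=\{i:l_i\neq a\}$ and then collapsing the alternating sum over the partition lattice; this is valid (it is precisely the moment-level content of the standard proof that mixed cumulants of independent variables vanish), but you leave the regrouping itself as an assertion, and it is the only genuinely nontrivial step. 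The paper instead introduces auxiliary variables $\tilde Z_{t,l}$ with the same marginals as the $Z_{t,l}$ but genuinely independent: condition~\eqref{eq:momentsZ} guarantees that all joint moments of order up to $Q$ agree between the two families, and since a $k$-th order joint cumulant is a polynomial in joint moments of order at most $k$, one gets $cum(Z_{t,l_1},\dots,Z_{t,l_k})=cum(\tilde Z_{t,l_1},\dots,\tilde Z_{t,l_k})=0$ by the standard independence theorem, with no partition bookkeeping at all. The two arguments prove the same lemma; the paper's coupling device is a clean way to outsource exactly the cancellation you flag as the hard part, and you could adopt it to make your write-up complete without executing the lattice computation.
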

		\begin{proof}
			By definition of joint cumulants we know
			\begin{align*}
				cum(X_{t,j_1},\change{...},X_{t,j_k}) &= cum(\sum_{l = 1}^{m}\Gamma_{j_1,l}Z_{t,l}, \change{...},\sum_{l = 1}^{m}\Gamma_{j_k,l}Z_{t,l})\\
				&=\sum_{l_1,\change{...},l_k = 1}^{m}\Gamma_{j_1,l_1}\cdots\Gamma_{j_k,l_k}cum(Z_{t,l_1},\change{...},Z_{t,l_k}).
			\end{align*}
			Hence it suffices to show that $cum(Z_{t,l_1},\change{...},Z_{t,l_k}) = 0$ if not all indices $l_1,\change{...},l_k$ are identical. By standard properties of cumulants this would be true if $Z_{t,l_1},\change{...},Z_{t,l_k}$ were independent; indeed, if there existed $l_i \neq l_j$ this would imply that $Z_{t,l_1},\change{...},Z_{t,l_k}$ would consist of at least two independent groups. Next, define $\tilde Z_{t,l}, l=1,...,m$ such that each $\tilde Z_{t,l}$ has the same distribution as $Z_{t,l}$ but $\tilde Z_{t,l_1},\change{...},\tilde Z_{t,l_k}$ are independent. By~\eqref{eq:momentsZ} we have
			\[
			\E[Z_{t,l_1}^{\alpha_1}\cdots Z_{t,l_q}^{\alpha_q}] = \E[Z_{t,l_1}^{\alpha_1}]\cdots\E[Z_{t,l_q}^{\alpha_q}] = \E[\tilde Z_{t,l_1}^{\alpha_1}\cdots \tilde Z_{t,l_q}^{\alpha_q}],
			\]
			and thus $cum(Z_{t,l_1},\change{...},Z_{t,l_k}) = cum(\tilde Z_{t,l_1},\change{...},\tilde Z_{t,l_k})$ by expressing cumulants through moments. Since $cum(\tilde Z_{t,l_1},\change{...},\tilde Z_{t,l_k}) = 0$ if $l_1,...,l_k$ are not identical, this completes the proof.
			
		\end{proof}

		Note that \cite{chen2010two} assume $Q = 8$. Assuming that $\sup_{j=1,...,m} \E[|Z_{1,j}|^6] = O(1)$, we have for any $2 \leq h \leq 6$,
		\begin{align*}
			&\sum_{j_1,\change{...},j_h}^{p}cum^2(X_{0,j_1},\change{...},X_{0,j_h})\\
			=&\sum_{j_1,\change{...},j_h}^{p}\sum_{l_1,l_2 = 1}^{m}(cum_h(Z_{0,l_1})cum_h(Z_{0,l_2}))\Gamma_{j_1,l_1}\cdots\Gamma_{j_h,l_1}\Gamma_{j_1,l_2}\cdots\Gamma_{j_h,l_2}\\
			=&\sum_{l_1,l_2 = 1}^{m}(cum_h(Z_{0,l_1})cum_h(Z_{0,l_2}))\left(\sum_{j = 1}^{p}\Gamma_{j,l_1}\Gamma_{j,l_2}\right)^h \leq C\sum_{l_1,l_2 = 1}^{m}|(\Gamma^T\Gamma)_{l_1,l_2}|^h,
		\end{align*}
		
		for some positive constant $C$. By simple manipulation we get for any $h \geq 2$
		\begin{align*}
			1&=\sum_{l_1,l_2 = 1}^{m}\left(|(\Gamma^T\Gamma)_{l_1,l_2}|^h/\sum_{j_1,j_2 = 1}^{m}|(\Gamma^T\Gamma)_{j_1,j_2}|^h\right)\\
			&\leq \sum_{l_1,l_2 = 1}^{m}\left(|(\Gamma^T\Gamma)_{l_1,l_2}|^h/\sum_{j_1,j_2 = 1}^{m}|(\Gamma^T\Gamma)_{j_1,j_2}|^h\right)^{2/h}\\
			&=\sum_{l_1,l_2 = 1}^{m}|(\Gamma^T\Gamma)_{l_1,l_2}|^2/\left(\sum_{j_1,j_2 = 1}^{m}|(\Gamma^T\Gamma)_{j_1,j_2}|^h\right)^{2/h}.
		\end{align*}
		This implies that
		\[
		\sum_{l_1,l_2 = 1}^{m}|(\Gamma^T\Gamma)_{l_1,l_2}|^h \leq \left(\sum_{l_1,l_2 = 1}^{m}|(\Gamma^T\Gamma)_{l_1,l_2}|^2\right)^{h/2} = \|\Gamma^T\Gamma\|_F^h = \|\Gamma\Gamma^T\|_F^h = \|\Sigma\|_F^h.
		\]
		Hence we have proved that~\eqref{eq:momentsZ} with $1 \leq \alpha_k \leq 6$ and $\sum_{k=1}^q \alpha_k \leq 6$ implies condition \ref{cumulant}.}\hfill\boxed{}
	
	\subsection{Proof of Proposition~\ref{prop:covm}}
	It suffices to verify that $Z$ satisfies~\ref{order} and~\ref{cumulant}. We begin by deriving a useful preliminary result which will be used in both proofs. Observe that
	\begin{align*}
		&\|\Sigma_Z\|_F^2 = \sum_{i,j = 1}^{p^2}\Sigma_Z(i,j)^2
		\\
		=& \sum_{l_1,l_2, l_3,l_4=1}^{p}\left(\Sigma_n{(l_1,l_3)}\Sigma_n{(l_2,l_4)}+\Sigma_n{(l_1,l_4)}\Sigma_n{(l_2,l_3)} + cum(X_{0,l_1},X_{0,l_2},X_{0,l_3},X_{0,l_4})\right)^2
		\\
		\geq& \sum_{l_1,l_2, l_3,l_4=1}^{p}\Sigma_n{(l_1,l_3)}^2\Sigma_n{(l_2,l_4)}^2 + \sum_{l_1,l_2, l_3,l_4=1}^{p}\Sigma_n{(l_1,l_4)}^2\Sigma_n{(l_2,l_3)}^2
		\\
		&+2\sum_{l_1,l_2, l_3,l_4=1}^{p}\Sigma_n{(l_1,l_3)}\Sigma_n{(l_2,l_4)}\Sigma_n{(l_1,l_4)}\Sigma_n{(l_2,l_3)}
		\\
		&+2\sum_{l_1,l_2, l_3,l_4=1}^{p}\Sigma_n{(l_1,l_3)}\Sigma_n{(l_2,l_4)}cum(X_{0,l_1},X_{0,l_2},X_{0,l_3},X_{0,l_4})
		\\
		&+2\sum_{l_1,l_2, l_3,l_4=1}^{p}\Sigma_n{(l_1,l_4)}\Sigma_n{(l_2,l_3)}cum(X_{0,l_1},X_{0,l_2},X_{0,l_3},X_{0,l_4})
		\\
		\geq& 2\|\Sigma_n\|_F^4 + 4\sum_{l_1,l_2, l_3,l_4=1}^{p}\Sigma_n{(l_1,l_3)}\Sigma_n{(l_2,l_4)}cum(X_{0,l_1},X_{0,l_2},X_{0,l_3},X_{0,l_4})
		\\
		\geq& 2\|\Sigma_n\|_F^4 - 4\sum_{l_1,l_2, l_3,l_4=1}^{p}|\Sigma_n{(l_1,l_3)}\Sigma_n{(l_2,l_4)}||cum(X_{0,l_1},X_{0,l_2},X_{0,l_3},X_{0,l_4})|
		\\
		\geq& 2\|\Sigma_n\|_F^4 - 4\sqrt{\sum_{l_1,l_2, l_3,l_4=1}^{p}\Sigma_n{(l_1,l_3)}^2\Sigma_n{(l_2,l_4)}^2}\sqrt{\sum_{l_1,l_2, l_3,l_4=1}^{p}cum(X_{0,l_1},X_{0,l_2},X_{0,l_3},X_{0,l_4})^2}
		\\
		=& 2\|\Sigma_n\|_F^4 - 4\|\Sigma_n\|_F^2o(\|\Sigma_n\|_F^2)
		\\
		\geq& \|\Sigma_n\|_F^4,
	\end{align*}
	for sufficiently large $n$ by condition \ref{cov:cum2}, where the second inequality follows since
	\[
	\sum_{l_1,l_2, l_3,l_4=1}^{p}\Sigma_n{(l_1,l_3)}\Sigma_n{(l_2,l_4)}\Sigma_n{(l_1,l_4)}\Sigma_n{(l_2,l_3)} = \sum_{l_1,l_2}\Big(\sum_{l=1}^p \Sigma_n{(l_1,l)}\Sigma_n{(l_2,l)}\Big)^2 \geq 0.
	\]
	Hence we have proved
	\begin{equation}\label{eq:sigmaZsigman}
		\|\Sigma_Z\|_F^2 \geq \|\Sigma_n\|_F^4.
	\end{equation}

	\textbf{Verification of~\ref{order}}	
	It is easy to see that $\Sigma_Z = \E[Z_0Z_0^T] - vech(\Sigma_n)vech(\Sigma_n)^T$. Specifically any element in $\Sigma_Z$ is in the form of $(\E[X_{1,l_1}X_{1,l_2}X_{1,l_3}X_{1,l_4}] - \E[X_{1,l_1}X_{1,l_2}]\E[X_{1,l_3}X_{1,l_4}])$ and the diagonal elements are of the form $Var(X_{1,l_1}X_{1,l_2})$. Recall that $\|\Sigma_Z\|_2 \leq \sqrt{\|\Sigma_Z\|_1\|\Sigma_Z\|_\infty}$, where
	\[
	\|\Sigma_Z\|_1 = \max_{j}\sum_{i = 1}^{p^2}|\Sigma_Z(i,j)|
	\]
	and
	\[
	\|\Sigma_Z\|_{\infty} = \max_{i}\sum_{j = 1}^{p^2}|\Sigma_Z(i,j)|.
	\]
	Since $\Sigma_Z$ is symmetric, we have $\|\Sigma_Z\|_1 = \|\Sigma_Z\|_\infty$ and thus $\|\Sigma_Z\|_2 \leq \|\Sigma_Z\|_1$. Observe that
	\begin{align*}
		&\|\Sigma_Z\|_1 = \max_j\sum_{i = 1}^{p^2}|\Sigma_Z(i,j)|
		\\
		&= \max_{l_1,l_2}\sum_{l_3,l_4 = 1}^p|\E(X_{0,l_1}X_{0,l_3})\E(X_{0,l_2}X_{0,l_4}) + \E(X_{0,l_1}X_{0,l_4})\E(X_{0,l_2}X_{0,l_3}) + cum(X_{0,l_1},X_{0,l_2},X_{0,l_3},X_{0,l_4})|
		\\
		&=  \max_{l_1,l_2}\sum_{l_3,l_4 = 1}^p|\Sigma_n{(l_1,l_3)}\Sigma_n{(l_2,l_4)}+\Sigma_n{(l_1,l_4)}\Sigma_n{(l_2,l_3)}+cum(X_{0,l_1},X_{0,l_2},X_{0,l_3},X_{0,l_4})|
		\\
		&\leq 2(\|\Sigma_n\|_1)^2 + \max_{l_1,l_2}\sum_{l_3,l_4 = 1}^p|cum(X_{0,l_1},X_{0,l_2},X_{0,l_3},X_{0,l_4})|.
	\end{align*}
	Thus by condition \ref{cov:ubound} and \ref{cov:cum1}, we have $\|\Sigma_Z\|_2 \leq \|\Sigma_Z\|_1 = o(\|\Sigma_n\|_F^2)$. Together with~\eqref{eq:sigmaZsigman} this yields $\|\Sigma_Z\|_2/\|\Sigma_Z\|_F \leq \|\Sigma_Z\|_1/\|\Sigma_n\|_F^2 \rightarrow 0$.
	
	\bigskip
	
	\textbf{Verification of~\ref{cumulant}} By Theorem 2 in {\cite{rosenblatt2012stationary}}, we know that
	\[
	cum(Z_{0,k_1},Z_{0,k_2},\change{...},Z_{0,k_h}) = \sum_{\nu = \{\nu_1,...,\nu_L\}}cum(X_{0,l_{i,j}},(i,j)\in\nu_1)\cdots cum(X_{0,l_{i,j}},(i,j)\in\nu_L)
	\]
	where the summation is over all indecomposable partitions $\nu_1 \cup \cdots \cup \nu_L = \nu$ of the two way table,
	\[
	\begin{matrix}
		(1,1) & (1,2)
		\\
		(2,1) & (2,2)
		\\
		\cdots & \cdots
		\\
		(h,1) & (h,2)
	\end{matrix}
	\]
	Note that for $h$, there are finite number of indecomposable partitions in the $h \times 2$ table. Denote the total number of such partitions as $M$. We have
	\begin{align*}
		\qquad&\sum_{k_1,\change{...},k_h = 1}^{p^2}cum^2(Z_{0,k_1},Z_{0,k_2},\change{...},Z_{0,k_h})
		\\
		&\leq \sum_{l_{i,j} = 1,i=1,\change{...},h,j = 1, 2}^{p}\Big(\sum_{\nu}cum(X_{0,l_{i,j}},(i,j)\in\nu_1)\cdots cum(X_{0,l_{i,j}},(i,j)\in\nu_L)\Big)^2
		\\
		&\leq \sum_{l_{i,j} = 1,i=1,\change{...},h,j = 1, 2}^{p}M^2\sum_{\nu}\Big(cum(X_{0,l_{i,j}},(i,j)\in\nu_1)\cdots cum(X_{0,l_{i,j}},(i,j)\in\nu_L)\Big)^2
		\\
		&= M^2 \sum_{\nu} \sum_{l_{i,j} = 1,i=1,\change{...},h,j = 1, 2}^{p}cum^2(X_{0,l_{i,j}},(i,j)\in\nu_1)\cdots cum^2(X_{0,l_{i,j}},(i,j)\in\nu_L)
		\\
		&= M^2\sum_{\nu}\sum_{l_{i,j}=1,(i,j)\in\nu_1}^{p}\cdots \sum_{l_{i,j}=1,(i,j)\in\nu_L}^{p}cum^2(X_{0,l_{i,j}},(i,j)\in\nu_1)\cdots cum^2(X_{0,l_{i,j}},(i,j)\in\nu_L)
		\\
		&=M^2\sum_{\nu}\prod_{k = 1}^{|\nu|}\Big(\sum_{l_{i,j}=1,(i,j)\in\nu_k}^{p}cum^2(X_{0,l_{i,j}},(i,j)\in\nu_k)\Big)
		\\
		&\leq M^3 C^M \prod_{i = 1}^{|\nu|}\|\Sigma_n\|_F^{|\nu_i|}
		\\
		& = M^3C^M\|\Sigma_n\|_F^{2h},
	\end{align*}
	by condition \ref{cov:cum2} where the third line in the above derivation follows by the Cauchy-Schwartz inequality. The desired result follows by~\eqref{eq:sigmaZsigman}. This completes the proof of Theorem~\ref{prop:covm}. \hfill $\Box$
	
	

	
	
	\subsection{Proof of Theorem~\ref{weakS}} \label{sec:proofSnwean}

	The proof relies on the following technical result which will be proved in Section~\ref{sec:proofMomLem}
	
	\begin{lemma}\label{lem:Xi1Xi6}
		Under assumption~\ref{cumulant} there exists a constant $C_6<\infty$ such that for all $j_1 \leq i_1,\change{...}, j_6 \leq i_6$,
		\[
		\Big| \sum_{l_1,\change{...},l_6 = 1}^{p}\E[X_{i_1+1,l_1}X_{j_1,l_1}\cdots X_{i_6+1,l_6}X_{j_6,l_6}]  \Big| \leq C_6\|\Sigma\|_F^6.
		\]
	\end{lemma}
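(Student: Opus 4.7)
The plan is to expand the expectation inside the sum via the moment-to-cumulant formula, identify the partitions of the twelve factors that yield non-vanishing cumulant products, and then bound the resulting contracted tensor by a product of Frobenius norms that are controlled by Assumption~\ref{cumulant}.

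Concretely, label the twelve random variables by positions $a \in \{1,\ldots,12\}$, with position $2k-1$ representing $X_{i_k+1,l_k}$ and position $2k$ representing $X_{j_k,l_k}$. Write $t(a)$ for the time index and $l(a)$ for the component label of position $a$. The moment-to-cumulant identity gives
\[
\E\Big[\prod_{k=1}^6 X_{i_k+1,l_k}X_{j_k,l_k}\Big] = \sum_{\pi} \prod_{B\in\pi}\kappa\bigl(X_{t(a),l(a)}: a\in B\bigr),
\]
where the outer sum runs over all partitions $\pi$ of $\{1,\ldots,12\}$. Any partition containing a singleton block contributes zero by centering, and any block whose positions carry distinct time indices contributes zero by independence across time. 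Since within each pair $k$ the two positions satisfy $j_k \le i_k < i_k+1$, positions $2k-1$ and $2k$ always lie in different blocks of a surviving partition. Hence every component label $l_k$ appears in exactly two blocks, each surviving block has size between $2$ and $6$ (a block contains at most one position per pair), and the set of surviving partitions is finite.

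For a surviving partition $\pi$ with blocks $B_1,\ldots,B_m$, identify the cumulant $\kappa(X_{0,l(a)}: a\in B_i)$ with a symmetric tensor $C_{B_i}$ of order $|B_i|$ indexed by the component labels that appear in $B_i$. The sum
\[
\sum_{l_1,\ldots,l_6=1}^{p} \prod_{i=1}^{m} C_{B_i}
\]
is then a tensor-network contraction in which every summation index is shared by exactly two tensors. Repeated application of the Cauchy--Schwarz inequality, or equivalently iterated use of the sub-multiplicativity $\|AB\|_F \le \|A\|_F\|B\|_F$ to merge two tensors at a time along all their common indices, shows that the absolute value of such a contraction is bounded by $\prod_{i=1}^m \|C_{B_i}\|_F$. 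Assumption~\ref{cumulant}, applied with $h=|B_i|\in\{2,\ldots,6\}$, then yields $\|C_{B_i}\|_F^2 \le C \|\Sigma\|_F^{|B_i|}$, so
\[
\Big|\sum_{l_1,\ldots,l_6=1}^{p}\prod_{i=1}^m C_{B_i}\Big| \le C^{m/2}\|\Sigma\|_F^{(|B_1|+\cdots+|B_m|)/2} = C^{m/2}\|\Sigma\|_F^6.
\]
Summing over the finitely many surviving partitions yields the stated bound with a constant $C_6$ depending only on $C$ and universal combinatorial counts.

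The main technical obstacle is justifying the tensor-network bound $\big|\sum \prod_i C_{B_i}\big| \le \prod_i \|C_{B_i}\|_F$, which I would prove by induction on the number of blocks: at each step pick two tensors $C_{B_i}, C_{B_j}$ sharing at least one summation index, contract them along all their common indices to form a tensor $T$, and check $\|T\|_F \le \|C_{B_i}\|_F \|C_{B_j}\|_F$ by a direct Cauchy--Schwarz argument on the defining formula. After each merger, every remaining summation index is still shared by exactly two tensors, so the induction proceeds until a single scalar remains. Everything else is combinatorial bookkeeping and does not interact with the dimension $p$, which enters only through the bound on $\|C_{B_i}\|_F^2$ supplied by Assumption~\ref{cumulant}.
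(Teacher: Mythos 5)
Your argument is correct and follows essentially the same route as the paper's proof: expand via the moment-to-cumulant formula, discard partitions containing singleton blocks or blocks mixing distinct time indices, observe that each component label $l_k$ then appears in exactly two distinct blocks so that every block has size in $\{2,\dots,6\}$ with $\sum_i |B_i| = 12$, and bound the contracted sum by the product of the $\ell^2$ norms of the cumulant tensors via iterated Cauchy--Schwarz before invoking Assumption~\ref{cumulant} with $h=|B_i|$. The only difference is organizational: the paper carries out the iterated Cauchy--Schwarz through an explicit case analysis on the intersections $C_1\cap C_u$, $(C_1\cup C_2)^c$, and so on, whereas your pairwise-merging induction (contract two tensors along all shared indices, verify $\|T\|_F\le \|C_{B_i}\|_F\|C_{B_j}\|_F$, and note that the exactly-two-occurrences property is preserved after each merger) packages the same estimate more cleanly.
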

	
	\bigskip
	\noindent
	In what follows define
	\[
	S_n(a,b) := \widetilde S_n(\floor{an},\floor{bn}).
	\]
	To prove process convergence in Theorem~\ref{sec:proofSnwean}, we need to establish two results: convergence of the finite-dimensional distributions, i.e.
	\begin{equation} \label{eq:fidi}
		\Big(\frac{\sqrt{2}}{n\|\Sigma\|_F}S_n(a_1,b_1), \frac{\sqrt{2}}{n\|\Sigma\|_F}S_n(a_2,b_2),\change{...}, \frac{\sqrt{2}}{n\|\Sigma\|_F}S_n(a_S,b_S) \Big) \overset{\cD}{\rightarrow} \Big(Q(a_1,b_1),\change{...}, Q(a_S,b_S) \Big)
	\end{equation}
	for any fixed points $(a_1,b_1),...,(a_S,b_S)$, and tightness of the sequence $\frac{\sqrt{2}}{n\|\Sigma\|_F}S_n$. The latter will be established by showing asymptotic equicontinuity in probability, i.e. we will prove that for any $x>0$
	\begin{equation} \label{eq:tight}
		\lim_{\delta\downarrow 0}\limsup_{n \rightarrow \infty}P\Big(\sup_{\|u-v\|_2 \leq \delta} \Big|\frac{\sqrt{2}}{n\|\Sigma\|_F}S_n(u) - \frac{\sqrt{2}}{n\|\Sigma\|_F}S_n(v)\Big| > x\Big) = 0.
	\end{equation}

	\subsubsection{Proof of \eqref{eq:fidi}}

	To simplify notation, we only consider the case $S = 2$, the general case follows by similar arguments. It sufices to show that $\forall \alpha_1, \alpha_2 \in \mathbb{R}$, $a_1 \leq b_1, a_2 \leq b_2, a_1, a_2, b_1, b_2 \in [0,1]$
	\begin{equation} \label{eq:fidiS2}
		\frac{\sqrt{2}}{n\|\Sigma\|_F}(\alpha_1S_n(a_1,b_1) + \alpha_2S_n(a_2,b_2)) \overset{\mathcal{D}}{\longrightarrow} \alpha_1Q(a_1,b_1) + \alpha_2Q(a_2,b_2).
	\end{equation}
	By symmetry it suffices to consider the following three cases: $a_1 \leq a_2 \leq b_2 \leq b_1$, $a_1 \leq a_2 \leq b_1 \leq b_2$ and $a_1 \leq b_1 \leq a_2 \leq b_2$. We will discuss the case $a_1 \leq a_2 \leq b_1 \leq b_2$ first. Consider the decomposition
	\begin{multline*}
		\frac{\sqrt{2}}{n\|\Sigma\|_F}(\alpha_1S_n(a_1,b_1) + \alpha_2S_n(a_2,b_2))
		\\
		= \frac{\sqrt{2}}{n\|\Sigma\|_F}\Big(\alpha_1\sum_{i = \floor{na_1}+1}^{\floor{nb_1}-1} \sum_{j=\floor{na_1}+1}^i X_{i+1}^TX_j+ \alpha_2\sum_{i = \floor{na_2}+1}^{\floor{nb_2}-1} \sum_{j=\floor{na_2}+1}^i  X_{i+1}^TX_j\Big)
		= \sum_{i = \floor{na_1}+1}^{\floor{nb_2}-1}\widetilde{\xi}_{n,i+1},
	\end{multline*}
	where
	\[
	\widetilde{\xi}_{n,i+1} = \left\{\begin{matrix}
		\alpha_1\xi_{1,i+1},& \qquad \text{ if } \floor{na_1}+1 \leq i \leq \floor{na_2};
		\\
		\alpha_1\xi_{1,i+1}+\alpha_2\xi_{2,i+1},  & \qquad \text{ if } \floor{na_2}+1 \leq i \leq \floor{nb_1}-1;
		\\
		\alpha_2\xi_{2,i+1},& \qquad \text{ if } \floor{nb_1} \leq i \leq \floor{nb_2}-1,
	\end{matrix}
	\right.\]
	
	\begin{align*}
		\xi_{1,i+1} &:= \frac{\sqrt{2}}{n\|\Sigma\|_F}X_{i+1}^T\sum_{j = \floor{na_1}+1}^{i}X_j,
	\end{align*}
	and	
	\begin{align*}
		\xi_{2,i+1} &:= \frac{\sqrt{2}}{n\|\Sigma\|_F}X_{i+1}^T\sum_{j = \floor{na_2}+1}^{i}X_j.
	\end{align*}
	Define $\cF_i = \sigma(X_i,X_{i-1},\change{...})$. A simple calculation shows that for any fixed $n$ the triangular array $(\sum_{i=1}^{n-1}\xi_{n,\floor{na_1}+i})_{1 \leq i \leq \floor{nb_2} - \floor{na_1} - 1}$ is a mean zero martingale difference sequence with respect to $\cF_i$. To show weak convergence in~\eqref{eq:fidiS2} will apply the martingale CLT (Theorem 35.12 in \cite{billingsley2008}). To this end we need to verify the following two conditions:
	\begin{enumerate}
		{ \item[(1)] $\forall \epsilon > 0, \sum_{i = 1}^{\floor{nb_2} - \floor{na_1} - 1}\E[\widetilde{\xi}_{n,\floor{na_1}+i}^2\1{|\widetilde{\xi}_{n,\floor{na_1}+i}| > \epsilon}|\cF_{\floor{na_1}+i-1}] \overset{p}{\rightarrow} 0$,
			\item[(2)] $V_n = \sum_{i = 1}^{\floor{nb_2} - \floor{na_1} - 1} \E[\widetilde{\xi}_{n,\floor{na_1}+i}^2|\cF_{\floor{na_1}+i-1}] \overset{p}{\rightarrow} \alpha_1^2(b_1-a_1)^2 + \alpha_2^2(b_2-a_2)^2 + 2\alpha_1\alpha_2(b_1-a_2)^2 $.}
	\end{enumerate}
	
	We will prove (1) and (2) above in several steps. First, to prove (1), we shall establish that
	\begin{equation}\label{eq:help1}
		\sum_{i = \floor{na_1}+1}^{\floor{nb_1}-1}\E[\widetilde{\xi}_{n,i+1}^4] \rightarrow 0.
	\end{equation}
	{For a proof of (2), consider the decomposition
		\begin{align*}
			V_n &= \sum_{i = \floor{na_1}+1}^{\floor{nb_2}-1}\E[\widetilde{\xi}_{i+1}^2|\cF_i]
			\\
			&= \alpha_1^2\sum_{i = \floor{na_1}+1}^{\floor{na_2}}\E[\xi_{1,i+1}^2|\cF_i] + \sum_{i = \floor{na_2}+1}^{\floor{nb_1}-1}\E[(\alpha_1\xi_{1,i+1} + \alpha_2\xi_{2,i+1})^2|\cF_i] + \alpha_2^2\sum_{i = \floor{nb_1}}^{\floor{nb_2}-1}\E[\xi_{2,i+1}^2|\cF_i]
			\\
			&=\alpha_1^2\sum_{i = \floor{na_1}+1}^{\floor{nb_1}-1}\E[\xi_{1,i+1}^2|\cF_i] + \alpha_2^2\sum_{i = \floor{na_2}+1}^{\floor{nb_2}-1}\E[\xi_{2,i+1}^2|\cF_i] + 2\alpha_1\alpha_2\sum_{i = \floor{na_2}+1}^{\floor{nb_1}-1}\E[\xi_{1,i+1}\xi_{2,i+1}|\cF_i]
			\\
			& =: \alpha_1^2V_{1,n} + \alpha_2^2V_{2,n} + 2\alpha_1\alpha_2V_{3,n}.
		\end{align*}
		We will show that
		\begin{align}
			V_{1,n} &\overset{p}{\rightarrow} (b_1-a_1)^2, \label{eq:V1n}
			\\
			V_{2,n} &\overset{p}{\rightarrow} (b_2-a_2)^2, \label{eq:V2n}
		\end{align}
		and
		\begin{align}
			V_{3,n} &\overset{p}{\rightarrow} (b_1-a_2)^2. \label{eq:V3n}
		\end{align}
	}
	
	For other cases of $a_1,a_2,b_1,b_2$, arguments are similar. For example, we assume $a_1 \leq a_2 \leq b_2 \leq b_1$, then
	
	\begin{multline*}
		\frac{\sqrt{2}}{n\|\Sigma\|_F}(\alpha_1S_n(a_1,b_1) + \alpha_2S_n(a_2,b_2))
		\\
		= \frac{\sqrt{2}}{n\|\Sigma\|_F}\Big(\alpha_1\sum_{i = \floor{na_1}+1}^{\floor{nb_1}-1} \sum_{j=\floor{na_1}+1}^i X_{i+1}^TX_j+ \alpha_2\sum_{i = \floor{na_2}+1}^{\floor{nb_2}-1} \sum_{j=\floor{na_2}+1}^i  X_{i+1}^TX_j\Big)
		= \sum_{i = \floor{na_1}+1}^{\floor{nb_2}-1}\widehat{\xi}_{n,i+1},
	\end{multline*}
	where
	\[
	\widehat{\xi}_{n,i+1} = \left\{\begin{matrix}
		\alpha_1\xi_{1,i+1},& \qquad \text{ if } \floor{na_1}+1 \leq i \leq \floor{na_2},
		\\
		\alpha_1\xi_{1,i+1}+\alpha_2\xi_{2,i+1},  & \qquad \text{ if } \floor{na_2}+1 \leq i \leq \floor{nb_1}-1,
		\\
		\alpha_1\xi_{2,i+1},& \qquad \text{ if } \floor{nb_1} \leq i \leq \floor{nb_2}-1.
	\end{matrix}
	\right.
	\]
	
	Then similar arguments can be applied. This is  the same for $a_1 \leq b_1 \leq a_2 \leq b_2$.
	
	\noindent
	\textbf{Proof of~\eqref{eq:help1}} Observe that
	\begin{align*}
		\sum_{i = \floor{na_1}+1}^{\floor{nb_2}-1}\E[\widetilde{\xi}_{i+1}^4] &= \alpha_1^4\sum_{i = \floor{na_1}+1}^{\floor{na_2}}\E[\xi_{1,i+1}^4] + \sum_{i = \floor{na_2}+1}^{\floor{nb_1}-1}\E[(\alpha_1\xi_{1,i+1}+\alpha_2\xi_{2,i+1})^4 ] + \alpha_2^4\sum_{i = \floor{nb_1}}^{\floor{nb_2-1}}\E[\xi_{2,i+1}^4]
		\\
		& \leq 8\alpha_1^4\sum_{i = \floor{na_1}+1}^{\floor{nb_1}-1}\E[\xi_{1,i+1}^4] + 8\alpha_2^4\sum_{i = \floor{na_2}+1}^{\floor{nb_2}-1}\E[\xi_{2,i+1}^4].
	\end{align*}
	Since the $X_i$ are i.i.d it follows that
	\begin{align*}
		&\E[\xi_{1,i+1}^4]
		\\
		&= \frac{4}{n^4\|\Sigma\|_F^4}\E\Big[\sum_{j_1,...,j_4 = \floor{na_1}+1}^{i} X_{i+1}^TX_{j_1}X_{j_2}^TX_{i+1}X_{i+1}^TX_{j_3}X_{j_4}^TX_{i+1}\Big]
		\\
		&= \frac{4}{n^4\|\Sigma\|_F^4}\sum_{j_1,...,j_4 = \floor{na_1}+1}^{i}\sum_{l_1,...,l_4 = 1}^{p}
		\E\Big[X_{i+1,l_1}X_{i+1,l_2}X_{i+1,l_3}X_{i+1,l_4}\Big]\E\Big[X_{j_1,l_1}X_{j_2,l_2}X_{j_3,l_3}X_{j_4,l_4}\Big]
		\\
		&= \frac{4}{n^4\|\Sigma\|_F^4}\sum_{j = \floor{na_1}+1}^{i}\sum_{l_1,...,l_4 = 1}^{p} \E\Big[X_{i+1,l_1}X_{i+1,l_2}X_{i+1,l_3}X_{i+1,l_4}\Big]\E\Big[X_{j,l_1}X_{j,l_2}X_{j,l_3}X_{j,l_4}\Big]
		\\
		&\quad+\frac{12}{n^4\|\Sigma\|_F^4}\sum_{j_1,j_2 = \floor{na_1}+1}^{i}\sum_{l_1,...,l_4 = 1}^{p} \E\Big[X_{i+1,l_1}X_{i+1,l_2}X_{i+1,l_3}X_{i+1,l_4}\Big]\E\Big[X_{j_1,l_1}X_{j_1,l_2}\Big]\E\Big[X_{j_2,l_3}X_{j_2,l_4}\Big].
	\end{align*}
	Next observe that
	\begin{align*}
		&\E\left[X_{i+1,l_1}X_{i+1,l_2}X_{i+1,l_3}X_{i+1,l_4}\right]
		\\
		&= \E\left[X_{i+1,l_1}X_{i+1,l_2}\right]\E\left[X_{i+1,l_3}X_{i+1,l_4}\right]\ + \E\left[X_{i+1,l_1}X_{i+1,l_3}\right]\E\left[X_{i+1,l_2}X_{i+1,l_4}\right]
		\\
		&\quad +\E\left[X_{i+1,l_1}X_{i+1,l_4}\right]\E\left[X_{i+1,l_2}X_{i+1,l_3}\right]\ + cum(X_{i+1,l_1}, X_{i+1,l_2},X_{i+1,l_3},X_{i+1,l_4})
		\\
		&= \Sigma_{l_1,l_2}\Sigma_{l_3,l_4} + \Sigma_{l_1,l_3}\Sigma_{l_2,l_4} + \Sigma_{l_1,l_4}\Sigma_{l_2,l_3} + cum(X_{i+1,l_1}, X_{i+1,l_2},X_{i+1,l_3},X_{i+1,l_4}),
	\end{align*}
	where $\Sigma_{l_1,l_2}$ is the $(l_1,l_2)$ component of $\Sigma$ and $cum(X_{i+1,l_1},X_{i+1,l_2},X_{i+1,l_3},X_{i+1,l_4})$ is the fourth order joint cumulant of $X_{i+1,l_1},X_{i+1,l_2},X_{i+1,l_3},X_{i+1,l_4}$. Thus by Cauchy-Schwartz inequality and Assumption~\ref{cumulant} we have for $C$ from~\ref{cumulant}
	\begin{align*}
		&\qquad \Big|\sum_{l_1,l_2,l_3,l_4 = 1}^{p} \E\Big[X_{i+1,l_1}X_{i+1,l_2}X_{i+1,l_3}X_{i+1,l_4}\Big]\E\Big[X_{j,l_1}X_{j,l_2}X_{j,l_3}X_{j,l_4}\Big] \Big|
		\\
		&= \Big|\sum_{l_1,l_2,l_3,l_4 = 1}^{p} \Big[\Sigma_{l_1,l_2}\Sigma_{l_3,l_4} + \Sigma_{l_1,l_3}\Sigma_{l_2,l_4} + \Sigma_{l_1,l_4}\Sigma_{l_2,l_3} + cum(X_{i+1,l_1}, X_{i+1,l_2},X_{i+1,l_3},X_{i+1,l_4})\Big]
		\\
		&{ \qquad \times\Big[\Sigma_{l_1,l_2}\Sigma_{l_3,l_4} + \Sigma_{l_1,l_3}\Sigma_{l_2,l_4} + \Sigma_{l_1,l_4}\Sigma_{l_2,l_3} + cum(X_{j,l_1}, X_{j,l_2},X_{j,l_3},X_{j,l_4})\Big] \Big|}
		\\
		&\leq 9\Big(\sum_{l_1,l_2,l_3,l_4 = 1}^{p}\Sigma_{l_1,l_2}^2\Sigma_{l_3,l_4}^2\Big)^{1/2}\Big(\sum_{l_1,l_2,l_3,l_4 = 1}^{p}\Sigma_{l_1,l_3}^2\Sigma_{l_2,l_4}^2\Big)^{1/2}
		\\
		&\quad +6\Big(\sum_{l_1,l_2,l_3,l_4 = 1}^{p}cum^2(X_{i+1,l_1},X_{i+1,l_2},X_{i+1,l_3},X_{i+1,l_4})\Big)^{1/2}\Big(\sum_{l_1,l_2,l_3,l_4 = 1}^{p}\Sigma_{l_1,l_2}^2\Sigma_{l_3,l_4}^2\Big)^{1/2}
		\\
		&\quad + \sum_{l_1,l_2,l_3,l_4 = 1}^{p}cum^2(X_{i+1,l_1},X_{i+1,l_2},X_{i+1,l_3},X_{i+1,l_4})\\
		& \leq 16(C \vee 1)^2\|\Sigma\|_F^4(1 + o(1)),
	\end{align*}
	where the last line follows from Assumption~\ref{cumulant} with $C$ from that assumption. Similarly we have
	\begin{align*}
		\sum_{l_1,l_2,l_3,l_4 = 1}^{p} \E\left[X_{i+1,l_1}X_{i+1,l_2}X_{i+1,l_3}X_{i+1,l_4}\right]\E\left[X_{j,l_1}X_{j,l_2}\right]\E\left[X_{j,l_3}X_{j,l_4}\right]
		\leq 4(C \vee 1)^2\|\Sigma\|_F^4(1 + o(1)).
	\end{align*}
	Combining the above results we have
	\begin{multline*}
		\sum_{i = \floor{na_1}+1}^{\floor{nb_1}-1}\E[\xi_{n,i+1}^4] \leq \frac{4}{n^4}\sum_{i = \floor{na_1}+1}^{\floor{nb_1}-1}\sum_{j = \floor{na_1}+1}^{i}16(C \vee 1)^2(1+o(1))
		\\
		+ \frac{12}{n^4}\sum_{i = \floor{na_1}+1}^{\floor{nb_1}-1}\sum_{j_1,j_2 = \floor{na_1}+1}^{i}4 (C \vee 1)^2(1+o(1))
		= o(1).
	\end{multline*}
	The bound
	\[
	\sum_{i = \floor{na_2}+1}^{\floor{nb_2}-1}\E[\xi_{2,i+1}^4] = o(1)
	\]
	follows by similar arguments and this completes the proof of~\eqref{eq:help1}.
	
	\bigskip
	
	\noindent
	\textbf{Proof of~\eqref{eq:V1n} and~\eqref{eq:V2n}} Since both statements follow by the same arguments we will only give details for the proof of~\eqref{eq:V1n}. Observe that
	\begin{align*}
		V_{1,n} &= \sum_{i = \floor{na_1}+1}^{\floor{nb_1}-1}\E[\xi_{1,i+1}^2|\cF_i]
		\\
		&= \frac{2}{n^2\|\Sigma\|_F^2}\sum_{i = \floor{na_1}+1}^{\floor{nb_1}-1}\E\left[\sum_{j_1,j_2 = \floor{na_1}+1}^{i} X_{i+1}^TX_{j_1}X_{j_2}^TX_{i+1} \Big|\cF_{i}\right]
		\\
		&= \frac{2}{n^2\|\Sigma\|_F^2}\sum_{i = \floor{na_1}+1}^{\floor{nb_1}-1}\sum_{j_1,j_2 = \floor{na_1}+1}^{i} tr\left(\E\left[X_{j_1}X_{j_2}^TX_{i+1}X_{i+1}^T|\cF_{i}\right]\right)
		\\
		&= \frac{2}{n^2\|\Sigma\|_F^2}\sum_{i = \floor{na_1}+1}^{\floor{nb_1}-1}\sum_{j_1,j_2 = \floor{na_1}+1}^{i} tr\left(X_{j_1}X_{j_2}^T\Sigma\right)
		\\
		&= \frac{2}{n^2\|\Sigma\|_F^2}\sum_{i = \floor{na_1}+1}^{\floor{nb_1}-1}\sum_{j = \floor{na_1}+1}^{i}X_{j}^T\Sigma X_{j} + \frac{2}{n^2\|\Sigma\|_F^2} {\sum_{i = \floor{na_1}+1}^{\floor{nb_1}-1}} \sum_{j_1,j_2 = \floor{na_1}+1}^{i} X_{j_2}^T\Sigma X_{j_1}
		\\
		&=: V_{n,1}^{(1)} + V_{n,1}^{(2)}.
	\end{align*}
	For $V_{n,1}^{(1)}$ we have
	\begin{align*}
		& \E\left[(V_{n,1}^{(1)} - (b_1-a_1)^2)^2\right]
		\\
		&= \E \left[\Big(\frac{2}{n^2\|\Sigma\|_F^2}\sum_{j = \floor{na_1}+1}^{\floor{nb_1}-1} (\floor{nb_1}-1-j) X_j^T\Sigma X_j - (b_1-a_1)^2\Big)^2\right]
		\\
		&= \frac{4}{n^4\|\Sigma\|_F^4}\E\left[\sum_{j,j' = \floor{na_1}+1}^{\floor{nb_1}-1}(\floor{nb_1}-1-j)(\floor{nb_1}-1-j')X_j^T\Sigma X_j X_{j'}^T \Sigma X_{j'}\right]
		\\
		&\quad - \frac{4(b_1-a_1)^2}{n^2\|\Sigma\|_F^2}\E\left[\sum_{j = \floor{na_1}+1}^{\floor{nb_1}-1}(\floor{nb_1}-1-j)X_j^T\Sigma X_j\right] + (b_1-a_1)^4
		\\
		&= \frac{4}{n^4\|\Sigma \|_F^4}\sum_{j = \floor{na_1}+1}^{\floor{nb_1}-1}(\floor{nb_1}-1-j)^2\E\Big[X_j^T\Sigma X_j X_j^T \Sigma X_j\Big] + o(1)\\
		&\leq \frac{4}{n^2\|\Sigma \|_F^4}\sum_{j = \floor{na_1}+1}^{\floor{nb_1}-1}\E\Big[X_j^T\Sigma X_j X_j^T \Sigma X_j\Big] + o(1).
	\end{align*}
	{Note that
		\begin{align*}
			\E\left[X_j^T\Sigma X_jX_j^T\Sigma X_j\right] &= \left|\sum_{l_1,\change{...},l_4 = 1}^{p}\E\left[X_{j,l_1}X_{j,l_2}X_{j,l_3}X_{j,l_4}\Sigma_{j_1,j_2}\Sigma_{j_3,j_4}\right]\right|\\
			&=  \left|\sum_{l_1,\change{...},l_4 = 1}^{p}\Sigma_{l_1,l_2}^2\Sigma_{l_3,l_4}^2\right|+  \left|\sum_{l_1,\change{...},l_4 = 1}^{p}cum(X_{0,l_1},X_{0,l_2},X_{0,l_3},X_{0,l_4})\Sigma_{l_1,l_2}\Sigma_{l_3,l_4}\right|\\
			&+\left|\sum_{l_1,\change{...},l_4 = 1}^{p}\Sigma_{l_1,l_4}\Sigma_{l_2,l_3}\Sigma_{l_1,l_2}\Sigma_{l_3,l_4}\right| +
			\left|\sum_{l_1,\change{...},l_4 = 1}^{p}\Sigma_{l_1,l_3}\Sigma_{l_2,l_4}\Sigma_{l_1,l_2}\Sigma_{l_3,l_4}\right|\\
			&\lesssim \|\Sigma\|_F^4,
		\end{align*}
		where the last inequality is a direct consequence of Cauchy-Schwartz inequality and Assumption \ref{cumulant}. Combining with previous results we have
		$$\E\left[\left(V_{n,1}^{(1)} - (b_1 - a_1)^2\right)^2\right] \leq O(\frac{1}{n}) + o(1) \rightarrow 0.$$
		This implies $V_{n,1}^{(1)} \overset{p}{\rightarrow} (b_1-a_1)^2$.}
	Moreover, {for $j_1\neq j_2, j_3 \neq j_4$ we have $\E\left[X_{j_2}^T\Sigma X_{j_1} X_{j_4}^T\Sigma X_{j_3}\right] = 0$ if $j_1 \notin\{j_3,j_4\}$ or $j_2 \notin\{j_3,j_4\}$ and $\E\left[X_{j_2}^T\Sigma X_{j_1} X_{j_4}^T\Sigma X_{j_3}\right] = tr(\Sigma^4)$ otherwise. Hence}
	\begin{align*}
		\E[(V_{n,1}^{(2)})^2] &= \frac{4}{n^4\|\Sigma\|_F^4}\sum_{i,i' = \floor{na_1}+1}^{\floor{nb_1}-1}\sum_{\substack{j_1,j_2 = \floor{na_1}+1\\j_1\neq j_2}}^{i} \sum_{\substack{j_3,j_4 = \floor{na_1}+1\\ j_3\neq j_4}}^{i'} \E\left[X_{j_2}^T\Sigma X_{j_1} X_{j_4}^T\Sigma X_{j_3}\right]
		\\
		&{\leq \frac{8}{n^4\|\Sigma\|_F^4}\sum_{i = \floor{na_1}+1}^{\floor{nb_1}-1}\sum_{i' = \floor{na_1}+1}^{i}\sum_{j_1 = \floor{na_1}+1}^{i'}\sum_{j_2 \neq j_1}^{i'} tr(\Sigma^4) }
		\\
		&\leq \frac{tr(\Sigma^4)}{\|\Sigma\|_F^4}O(1) \rightarrow 0.
	\end{align*}
	Combining results~\eqref{eq:V1n} follows.
	
	\bigskip
	
	\noindent
	\textbf{Proof of~\eqref{eq:V3n}} {Observe the decomposition
		\begin{align*}
			V_{3,n} &= \sum_{i = \floor{na_2}+1}^{\floor{nb_1}-1}\E[\xi_{1,i+1}\xi_{2,i+1}|\cF_i]
			\\
			&= \frac{2}{n^2\|\Sigma\|_F^2}\sum_{i = \floor{na_2}+1}^{\floor{nb_1}-1}\sum_{j_1 = \floor{na_1}+1}^{i}\sum_{j_2 = \floor{na_2}+1}^{i}X_{j_1}^T\Sigma X_{j_2}
			\\
			&= \frac{2}{n^2\|\Sigma\|_F^2}\sum_{i = \floor{na_2}+1}^{\floor{nb_1}-1}\sum_{j_1 = \floor{na_1}+1}^{\floor{na_2}}\sum_{j_2 = \floor{na_2}+1}^{i}X_{j_1}^T\Sigma X_{j_2}+\frac{2}{n^2\|\Sigma\|_F^2}\sum_{i = \floor{na_2}+1}^{\floor{nb_1}-1}\sum_{j_1, j_2 = \floor{na_2}+1}^{i} X_{j_1}^T\Sigma X_{j_2}.
		\end{align*}
		{Note that for $j_1\neq j_2, j_3 \neq j_4$ we have $\E\left[X_{j_2}^T\Sigma X_{j_1} X_{j_4}^T\Sigma X_{j_3}\right] = 0$ if $j_1 \notin\{j_3,j_4\}$ or $j_2 \notin\{j_3,j_4\}$ and $\E\left[X_{j_2}^T\Sigma X_{j_1} X_{j_4}^T\Sigma X_{j_3}\right] = tr(\Sigma^4)$ otherwise. Hence we obtain for the first term}
		{ \begin{align*}
				&\qquad \E\Big[\Big(\frac{2}{n^2\|\Sigma\|_F^2}\sum_{i = \floor{na_2}+1}^{\floor{nb_1}-1}\sum_{j_1 = \floor{na_1}+1}^{\floor{na_2}}\sum_{j_2 = \floor{na_2}+1}^{i}X_{j_1}^T\Sigma X_{j_2}\Big)^2\Big]
				\\
				&= \frac{4}{n^4\|\Sigma\|_F^4}\sum_{i,i' = \floor{na_2}+1}^{\floor{nb_1}-1}\sum_{j_1 = \floor{na_1}+1}^{\floor{na_2}}\sum_{j_2 = \floor{na_2}+1}^{i}\sum_{j_1' = \floor{na_1}+1}^{\floor{na_2}}\sum_{j_2' = \floor{na_2}+1}^{i'}\E[X_{j_2}^T\Sigma X_{j_1}X_{j_1'}^T \Sigma X_{j_2'}]
				\\
				&\leq\frac{8}{n^4\|\Sigma\|_F^4} (\floor{nb_1}-\floor{na_2}-1)^2(\floor{na_2} - \floor{na_1})(\floor{nb_1} - \floor{na_2} - 1)tr(\Sigma^4)
				\\
				&\leq \frac{tr(\Sigma^4)}{\|\Sigma\|_F^4} O(1)\rightarrow 0.
		\end{align*}}
		Thus the first term in the decomposition of $V_{3,n}$ is $o_p(1)$. The second term is of the same structure as $V_{1,n}$, and it follows that
		\[
		\frac{2}{n^2\|\Sigma\|_F^2}\sum_{i = \floor{na_2}+1}^{\floor{nb_1}-1}\sum_{j_1 = \floor{na_2}+1}^{i}\sum_{j_2 = \floor{na_2}+1}^{i}X_{j_1}^T\Sigma X_{j_2} \overset{p}{\rightarrow} (b_1-a_2)^2.
		\]
		This yields~\eqref{eq:V3n}. Thus~\eqref{eq:help1}-\eqref{eq:V3n} are established and this completes the proof of~\eqref{eq:fidi}. \hfill $\Box$}
	
	\subsubsection{Proof of~\eqref{eq:tight}}
	
	The proof will rely on the following bound for the increments of $S_n$: there exists a constant $\tilde C < \infty$ such that for all $n \geq 2$ and all $a,b,c,d \in [0,1]$ we have
	\begin{equation}\label{eq:momb}
		\E\Big[\frac{1}{n^6\|\Sigma\|_F^6}(S_n(a,b) - S_n(c,d))^6\Big] \leq \tilde C(\|(a,b) - (c,d)\|_2^{3} + n^{-3}).
	\end{equation}
	This bound will be established at the end of the proof. For the remainder of the proof, define
	\[
	B_n(a,b) := \frac{\sqrt{2}}{n\|\Sigma\|_F}S_n(a,b).
	\]
	Note that $B_n(u)$ has a piece-wise constant structure, more precisely we have for any $u \in [0,1]^2$, $B_n(u) = B_n(\floor{nu}/n)$ (here, $\floor{nu}$ is understood component-wise). Define the index set $T_n := \{(i/n,j/n): i,j = 0,...,n\}$. Then
	\[
	\sup_{\|u-v\| \leq \delta} \Big|\frac{\sqrt{2}}{n\|\Sigma\|_F}S_n(u) - \frac{\sqrt{2}}{n\|\Sigma\|_F}S_n(v)\Big| \leq \sup_{u,v \in T_n: \|u-v\| \leq \delta + 2n^{-1}} \Big|B_n(u) - B_n(v)\Big|.
	\]
	Consider the metric (on the set $T_n$) $d(u,v) = \|u-v\|^{1/2}$. From~\eqref{eq:momb} and the definition of $B_n$ we obtain the existence of a constant $C < \infty$ such that for all $n \geq 2$ and all $u,v \in T_n$
	\[
	\E[|B_n(u) - B_n(v)|^6] \leq C\Big(\|u-v\|^{3} + \frac{1}{2^{12}}n^{-3}\Big),
	\]
	which implies
	\[
	\|B_n(u) - B_n(v)\|_{L_6} \leq 2C\|u-v\|^{1/2} \quad \forall~ u,v\in T_n: \|u-v\|^{3} \geq \frac{1}{2^{12}}n^{-3}.
	\]
	Note that the packing number of $T_n$ with respect to the metric $d$ satisfies
	\[
	D_{T_n}(\epsilon,d) \leq D_{[0,1]^2}(\epsilon,d) \leq C_D\epsilon^{-4}
	\]
	for some constant $C_D < \infty$. Now apply Lemma~A.1 from~\cite{kvdh2016} with $\Psi(x) = x^6$, $T = T_n$, $d(u,v) = \|u-v\|^{1/2}$, $\bar\eta = n^{-1/2}/2$ to find that for any $\eta \geq \bar \eta$ there exists a random variable $R_n(\eta,\delta)$ such that
	\[
	\sup_{d(u,v)\leq (\delta + 2n^{-1})^{1/2}}|B_n(u) - B_n(v)| \leq R_n(\eta,\delta) + 2\sup_{u,v \in T_n:~ d(u,v) \leq \bar{\eta}}|B_n(u) - B_n(v)|,
	\]
	and
	\[
	\|R_n(\eta,\delta)\|_{6} \leq K\Big[\int_{\bar\eta/2}^{\eta}(D_{T_n}(\epsilon,d))^{1/6}d\epsilon + ((\delta + 2n^{-1})^{1/2} + 2\bar\eta)(D_{T_n}^2(\eta,d))^{1/6}\Big].
	\]
	for some constant $K$ independent of $\delta, \eta, n$. Next, observe that
	\[
	d(u,v) \leq \bar{\eta} \Leftrightarrow \|u-v\| \leq n^{-1}/4,
	\]
	and since $\inf_{u,v\in T_n, u\neq v}\|u-v\| \geq n^{-1}$ it follows that $u,v \in T_n:~ d(u,v) \leq \bar{\eta} $ implies $u=v$ (recall that $T_n$ is discrete) and thus the supremum vanishes and we obtain
	\begin{equation} \label{eq:WnRn}
		\sup_{d(u,v)\leq(\delta + 2n^{-1})^{1/2}}|B_n(u) - B_n(v)| \leq R_n(\eta,\delta).
	\end{equation}
	Now a simple computation shows that
	\begin{align*}
		&\int_{\bar\eta/2}^{\eta}(D_{T_n}(\epsilon,d))^{1/6}d\epsilon + ((\delta + 2n^{-1})^{1/2} + 2\bar\eta) (D_{T_n}^2(\eta,d))^{1/6}
		\\
		&\lesssim \int_{0}^{\eta}\epsilon^{-2/3}d\epsilon + (\delta^{1/2} + n^{-1/2})\eta^{-4/3}
		\\
		&= 3\eta^{1/3} + (\delta^{1/2} + n^{-1/2})\eta^{-4/3}.
	\end{align*}
	Apply the Markov inequality to find that for any $x > 0$
	\[
	\lim_{\delta\downarrow 0}\limsup_{n \rightarrow \infty} P(|R_n(\eta,\delta)| > x) \leq \frac{3\eta^{1/3}}{x^6}.
	\]
	Since $\eta$ was arbitrary, it follows that
	\[
	\lim_{\delta\downarrow 0}\limsup_{n \rightarrow \infty} P(|R_n(\eta,\delta)| > x) = 0.
	\]
	Combined with~\eqref{eq:WnRn} this implies~\eqref{eq:tight}. Hence it remains to establish~\eqref{eq:momb}.
	
	\bigskip
	
	\noindent
	\textbf{Proof of~\eqref{eq:momb}} We shall assume $a < c < d < b$, proofs in all other cases are similar. By definition of $S_n$,
	\begin{align*}
		S_n(a,b) - S_n(c,d) &= \sum_{i = \floor{na}+1}^{\floor{nb}-1}\sum_{j = \floor{na}+1}^{i} X_{i+1}^TX_j - \sum_{i = \floor{nc}+1}^{\floor{nd}-1}\sum_{j = \floor{nc}+1}^{i} X_{i+1}^TX_j\\
		&= \sum_{i = \floor{na}+1}^{\floor{nc}-1}\sum_{j = \floor{na}+1}^{i}X_{i+1}^TX_{j} + \sum_{i = \floor{nc}}^{\floor{nd}-1}\sum_{j = \floor{na}+1}^{\floor{nc}}X_{i+1}^TX_j\\
		&\qquad +\sum_{i = \floor{nd}}^{\floor{nb}-1}\sum_{j = \floor{na}+1}^{\floor{nc}}X_{i+1}^TX_j + \sum_{i = \floor{nd}}^{\floor{nb}-1}\sum_{j = \floor{nc}+1}^{\floor{nd}}X_{i+1}^TX_j\\
		&\qquad+\sum_{i = \floor{nd}+1}^{\floor{nb}-1}\sum_{j = \floor{nd}+1}^{i}X_{i+1}^TX_j\\
		&= A + B + C + D + E.
	\end{align*}
	
	Note that $A = S_n(a,c)$ and $E = S_n(b,d)$ and that $B$, $C$ and $D$ share the same structure. Applying H\"older's inequality yields
	\[
	(A+B+C+D+E)^6 \lesssim (A^6+B^6+C^6 + D^6 + E^6),
	\]
	and thus it suffices to show that
	\[
	\E\Big[\frac{1}{n^6\|\Sigma\|_F^6}(A^6+B^6+C^6 + D^6 + E^6)\Big] \lesssim (\|(a,b) - (c,d)\|^{3} + n^{-3}).
	\]
	Apply Lemma~\ref{lem:Xi1Xi6} to obtain
	\begin{align*}
		\E[S_n(a,c)^6] &=  \sum_{i_1,\change{...},i_6  = \floor{na}+1}^{\floor{nc}-1}\sum_{j_1 = \floor{na}+1}^{i_1}\cdots\sum_{j_6 = \floor{na}+1}^{i_6}
		\sum_{l_1,\change{...},l_6 = 1}^{p}\E[X_{i_1+1,l_1}X_{j_1,l_1}\cdots X_{i_6+1,l_6}X_{j_6,l_6}]\\
		&\leq C_6(\floor{nc}-\floor{na}-1)^6\|\Sigma\|_F^6.
	\end{align*}
	By definition, $S_n(a,c)  = 0$ if $\floor{nc} - \floor{na} < 2$. If $\floor{nc} - \floor{na} \geq 2$, which implies $c - a > 1/n$,
	\[
	\floor{nc} - \floor{na} - 1 \leq nc - na + (\floor{na} - na) - 1 \leq n(c-a).
	\]
	Thus
	\[
	\frac{1}{n^6\|\Sigma\|_F^6}\E[S_n(a,c)^6] \leq C_6(c-a)^6.
	\]
	Exactly the same argument can be used to bound $\E[S_n(a,c)^6]$. Next observe that we have for $\floor{nd} - \floor{nc} > 2$,
	\begin{align*}
		\frac{1}{n^6\|\Sigma\|_F^6}\E[B^6] &\leq C_6(\floor{nd} - \floor{nc})^3(\floor{nc} - \floor{na})^3/n^6
		\\
		&\leq C_6(\floor{nc} - \floor{na})^3/n^3
		\\
		&\leq C_6(nc - na + (\floor{na} - na))^3/n^3
		\\
		&\leq C_6(c-a+1/n)^3
		\\
		&\lesssim ((c-a)^3 + n^{-3}).
	\end{align*}
	Thus by summarizing the above steps, we have
	\begin{align*}
		&\qquad \E[\frac{1}{n^6\|\Sigma\|_F^6}(S_n(a,b) - S_n(c,d))^6]
		\\
		&\lesssim ((c-a)^6 + (c-a)^3 + (b-d)^3 + (b-d)^6 + (b-d)^3 + n^{-3})
		\\
		&\lesssim ((c-a)^3 + (b-d)^3 + n^{-3})
		\\
		&\leq \|(c-a),(b-d)\|_2^3 + n^{-3},
	\end{align*}
	where the last inequality in the previous line follows from
	\begin{align*}
		((c-a)^3 + (b-d)^3)^2  &=  (c-a)^6+(b-d)^6 + 2(c-a)^3(b-d)^3
		\\
		&= (c-a)^6+(b-d)^6 + (c-a)^2(b-d)^2(2(c-a)(b-d))
		\\
		&\leq (c-a)^6 + (b-d)^6 + (c-a)^2(b-d)^2((c-a)^2 + (b-d)^2)
		\\
		&\leq ((c-a)^2 + (b-d)^2)^3,
	\end{align*}
	which implies
	\[
	(c-a)^3 + (b-d)^3 \leq ((c-a)^2 + (b-d)^2)^{3/2} = \|(c-a),(b-d)\|_2^3.
	\]
	
	\hfill $\Box$
	
	\subsubsection{Proof of Lemma~\ref{lem:Xi1Xi6}}\label{sec:proofMomLem}

	By the generalized H\"older's inequality, we have
	
	\begin{multline*}
		\left|\sum_{l_1,l_2,...,l_6 = 1}^p\E[X_{i_1+1,l_1}X_{j_1,l_1}\cdots X_{i_6+1,l_6}X_{j_6,l_6}]\right|
		=\left|\E[(X_{i_1+1}^TX_{j_1})\cdots (X_{i_6+1}^TX_{j_6})]\right| \\\leq \sqrt[6]{\E[(X_{i_1+1}^TX_{j_1})^6]}\cdots\sqrt[6]{\E[(X_{i_6+1}^TX_{j_6})^6]}
		={\E[(X_{2}^TX_{1})^6]}.
	\end{multline*}
	
	Let $\pi$ be any disjoint partition over the set $\{l_1,l_2,l_3,l_4, l_5, l_6 \}$ such that for any $B \in \pi$, $|B| \neq 1$. Thus,
	\begin{align*}
		& E \left[ (X_{2}^T X_{1})^6 \right] \\
		= &  \sum_{l_1,..., l_6 = 1}^p E[ X_{2,l_1} X_{2,l_2} X_{2,l_3} X_{2,l_4} X_{2,l_5} X_{2, l_6} ] E[X_{1,l_1} X_{1,l_2} X_{1,l_3} X_{1,l_4} X_{1,l_5}X_{1, l_6} ]  \\
		= &  \sum_{l_1,..., l_6 = 1}^p (E[ X_{1,l_1} X_{1,l_2} X_{1,l_3} X_{1,l_4}X_{1,l_5}X_{1, l_6} ])^2  =  \sum_{l_1,..., l_6 = 1}^p \left(\sum_{\pi} \prod_{B \in \pi} cum(X_{1,l_k} : l_k \in B ) \right)^2   \\
		\leq &  C_6\sum_{l_1,..., l_6 = 1}^p  \sum_{\pi} \prod_{B \in \pi} cum^2(X_{1,l_k} : l_k \in B )  \text{ by Cauchy's inequality} \\
		\leq & C_6\sum_{\pi} \prod_{B \in \pi} \left\lbrace \sum_{l_k \in B} \sum_{l_k=1}^p cum^2(X_{1,l_k} : l_k \in B ) \right\rbrace \\
		\leq &   C_6\sum_{\pi} \prod_{B \in \pi} \| \Sigma \|^{|B|}_F \hspace{2cm} \text{ by Assumption~\ref{cumulant}} \\
		\leq & C_6\| \Sigma \|^6_F,
	\end{align*}
	where $C_6 > 0$ is a generic constant that varies from line by line. This completes the proof. \hfill \boxed{}

	\section{Proofs of results for high-dimensional time series}
	\label{sec:appendixHDTS}
	Throughout this section, we assume that the process $X_t$ admits a linear process represenation.
	
	\subsection{Properties of Linear Process}
	Firstly, applying Beveridge Nelson (BN) decomposition in \cite{phillips1992asymptotics}, we have
	\begin{align*}
		X_i = D_i - \varepsilon_i ,
	\end{align*}
	where $D_i = (\sum_{u = 0}^{\infty} c_{u}) \epsilon_i $, 
	$ 
	\widetilde{D}_{i} = \sum_{j =0}^{\infty} (\sum_{u = j+1}^{\infty} c_{u}) \epsilon_{i-j}
	$ and $\varepsilon_i = \widetilde{D}_{i} - \widetilde{D}_{i-1}$. 
	We then state three useful auxiliary lemmas.
	\begin{lemma} \label{lem:cov} Suppose Assumption \ref{ass:main} (\ref{A41A1}, \ref{A41A2}, \ref{A41A5}) is true. Then, for any $h = 2,3,4,5,6$ and $j = 0,1,\change{...}, h$, we have
		\begin{align*}
			& \sum_{l_1,l_2,\change{...},l_h=1}^p |cum( D_{i_1, l_1},\change{...}, D_{i_j, l_j}, \widetilde{D}_{i_{j+1}, l_{j+1}},\change{...},  \widetilde{D}_{i_{h}, l_{h}})|  \lesssim \| \Gamma \|_F^h, \\
			& \sum_{l_1,l_2,\change{...},l_h=1}^p cum^2( D_{i_1, l_1},\change{...}, D_{i_j, l_j}, \widetilde{D}_{i_{j+1}, l_{j+1}},\change{...},  \widetilde{D}_{i_{h}, l_{h}})  \lesssim \| \Gamma \|_F^h.
		\end{align*}
	\end{lemma}

	\begin{lemma} \label{lem:exp}
		Suppose Assumption \ref{ass:main} (\ref{A41A1}, \ref{A41A2}) is true. Then, for some constant $C$ and $0< \rho <1$, we have for $k\leq 7$
		\begin{align*}
			|cum(Z_{i_0},\change{...}, Z_{i_k})| \leq C \rho^{i_{max} - i_{min}},
		\end{align*}
		where $i_{max} = \max\{ i_0, \change{...}, i_k \}$, $i_{min} = \min \{ i_0, \change{...}, i_k \}$ and for each $i \in \{ i_0, \change{...}, i_k \}$, $Z_i$ can be any element from the set $ \{ X_{i,j}, D_{i,j}, \widetilde{D}_{i,j}, \varepsilon_{i,j} \}_{i=i_0, \change{...}, i_k, j = 1, \change{...}, p}$. 
	\end{lemma}
	
	\begin{lemma} \label{lem:main}  Under Assumption \ref{ass:main}, for any  $i \neq j $, we have
		\begin{align*}
			E [ (D_{i}^T D_{j})^6 ]  \lesssim \|\Gamma\|_F^6.
		\end{align*} 
	\end{lemma}
	
	\subsection{Proof of Theorem \ref{thm:main}}\label{sec:pfThmDep}
	Recall that
	\begin{align*}
		\widetilde{H}_{X_n}(k;l,m|\tau)   = \frac{\sqrt{2}}{n^3 \|\Gamma \|_F} \sum\limits_{l \leq j_1 , j_3 \leq k }^{|j_1-j_3|>\tau}\sum\limits_{k+ \tau + 1 \leq j_2, j_4 \leq m }^{|j_2-j_4|>\tau} (X_{j_1} - X_{j_2})^T (X_{j_3} - X_{j_4}). 
	\end{align*}
	For $u=1,2,3,4$, define
	\begin{equation}
		\label{eq:Stilde}
		\widetilde{S}_{X_n}^{u}(k,m| \tau)  = \left\lbrace  \begin{array}{ll}
			\sum\limits_{i=k}^{m-\tau-1}\sum\limits_{j=k}^{i} \widetilde{w}_{i,j}^u X_{i+\tau+1}^T X_j, & m - \tau -1 \geq k; \\
			0, & \text{otherwise, }  
		\end{array} \right. 
	\end{equation}
	where 
	\begin{align*}
		\widetilde{w}_{i,j}^u = \1_{\{ u=1 \} } +\frac{j}{n}  \1_{\{ u=2 \} } + \frac{ i+\tau +1 }{n} \1_{\{ u=3 \} } + \frac{ i+\tau +1 }{n} \frac{j}{n} \1_{\{ u=4 \} } .
	\end{align*}
	Expanding the inner product $ (X_{j_1} - X_{j_2})^T (X_{j_3} - X_{j_4}) $ leads to 
	\begin{multline*}
		\widetilde{H}_{X_n}(k;l,m|\tau) = 2 \frac{(m - 2\tau - k -1)(m-2\tau-k)}{n^2}\frac{\sqrt{2}}{n \|\Gamma \|_F}\widetilde{S}_{X_n}(l,k|\tau)  \\
		+ 2\frac{(k-\tau-l)(k-\tau-l+1)}{n^2}\frac{\sqrt{2}}{n \|\Gamma \|_F} \widetilde{S}_{X_n}(k+\tau+1, m|\tau) \\
		- 2 \frac{\sqrt{2}}{n \|\Gamma \|_F} \frac{1}{n^2} \sum\limits_{l \leq j_1, j_3 \leq k}^{|j_1- j_3| > \tau}X_{j_1}^T \sum\limits_{k+\tau+1 \leq j_2, j_4 \leq m}^{|j_2-j_4| > \tau} X_{j_4}.
	\end{multline*}
	where vectors in the last term admits
	\begin{multline*}
		\sum\limits_{l \leq j_1, j_3 \leq k}^{|j_1- j_3| > \tau}X_{j_1} = (k-\tau)\sum\limits_{j_1=l}^{k-\tau-1} X_{j_1} - (l+\tau) \sum\limits_{j_1=l+\tau+1}^k X_{j_1} - \sum\limits_{j_1=l}^{k-\tau-1} j_1 X_{j_1} + \sum\limits_{j_1=l+\tau+1}^k j_1 X_{j_1}, 
	\end{multline*}
	and
	\begin{multline*}
		\sum\limits_{k+\tau+1 \leq j_2, j_4 \leq m}^{|j_2-j_4|>\tau} X_{j_4} = (m-\tau) \sum\limits_{j_4 = k+\tau+1}^{m-\tau-1}X_{j_4} - (k+2\tau+1)\sum\limits_{j_4 = k+2\tau+2}^m X_{j_4} \\ - \sum\limits_{ j_4 = k+\tau+1 }^{m-\tau-1} j_4 X_{j_4} + \sum\limits_{j_4 = k+2\tau+2}^m j_4 X_{j_4}.
	\end{multline*}
	Thus, we can decompose the last term as 
	\begin{footnotesize}
		\begin{align}
			& \frac{1}{n^2} \sum\limits_{l \leq j_1, j_3 \leq k}^{|j_1- j_3| > \tau}X_{j_1}^T \sum\limits_{k+\tau+1 \leq j_2, j_4 \leq m}^{|j_2-j_4| > \tau} X_{j_4} \notag\\ 
			= &    \frac{(k-\tau)(m-\tau)}{n^2}Q_{X_n}^{1}(l, k-\tau-1;k+\tau+1,m-\tau-1)  -  \frac{(k-\tau)(k+2\tau+1)}{n^2}Q_{X_n }^{1}(l, k-\tau-1;k+2\tau+2,m) \notag\\
			& -  \frac{(l+\tau)(m-\tau)}{n^2} Q_{X_n }^{1}(l+\tau+1, k; k+\tau+1,m-\tau-1) +  \frac{(l+\tau)(k+2\tau+1)}{n^2} Q_{X_n }^{1}(l+\tau+1, k; k+2\tau+2,m) \notag\\
			& -  \frac{(m-\tau)}{n} Q_{X_n}^{2}(l, k-\tau-1; k+\tau+1,m-\tau-1)  +  \frac{(k+2\tau+1)}{n} Q_{X_n}^{2}(l, k-\tau-1; k+2\tau+2,m) \notag\\
			& +  \frac{(m-\tau)}{n} Q_{X_n}^{2}(l+\tau+1, k; k+\tau+1,m-\tau-1)  -  \frac{(k+2\tau+1)}{n} Q_{X_n}^{2}(l+\tau+1, k; k+2\tau+2,m) \notag\\
			& -  \frac{(k- \tau)}{n}Q_{X_n }^{3}(l, k-\tau-1;k+\tau+1,m-\tau-1)  +  \frac{(k- \tau)}{n}Q_{X_n }^{3}(l, k-\tau-1;k+2\tau+2,m) \notag\\
			& + \frac{(l + \tau)}{n} Q_{X_n }^{3}(l+\tau+1, k; k+\tau+1,m-\tau-1)  - \frac{(l + \tau)}{n} Q_{X_n }^{3}(l+\tau+1, k; k+2\tau+2,m) \notag\\
			& + Q_{X_n }^{4}(l, k-\tau-1;k+\tau+1,m-\tau-1) -  Q_{X_n }^{4}(l, k-\tau-1;k+2\tau+2,m) \notag\\
			& -  Q_{X_n }^{4}(l+\tau+1, k;k+\tau+1,m-\tau-1)  + Q_{X_n }^{4}(l+\tau+1, k;k+2\tau+2,m), \label{repr:DXdep}
		\end{align}
	\end{footnotesize}
	where for $w_1 < w_2$, $h_1< h_2$, $w_2 \leq h_1 - \tau$, 
	\begin{multline*}
		Q_{X_n }^{u}(w_1, w_2;h_1,h_2) = \widetilde{S}_{X_n}^{u}(w_1, h_2| \tau) - \widetilde{S}_{X_n}^{u}(w_1,h_1-1| \tau) \\ - \widetilde{S}_{X_n}^{u}(w_2+1,h_2| \tau) + \widetilde{S}_{X_n}^{u}(w_2+1, h_1-1| \tau), 
	\end{multline*}
	otherwise $ Q_{X_n }^{u}(w_1, w_2;h_1,h_2)=0 $. See Figure \ref{fig1} for an illustration of $Q_{X_n }^{u}$ and $\widetilde{S}_{X_n}^{u}$. The above decomposition suggests us to write $\widetilde{H}_{X_n}(k;l,m|\tau)$ as a continuous functional of $\widetilde{S}_{X_n}^{u}(k,m| \tau)$.
	\begin{figure}[t]
		\centering
		\begin{tikzpicture}
			\draw[thick,->] (0,12) -- (12,12) node[anchor=south east] {$ j $};
			\draw[thick, ->] (0,12) -- (0,0) node[anchor=south east] {$ i $};
			\draw[dashed] (0,12) -- (12,0);
			\draw[dashed] (0,11) -- (11,0);
			\draw[dashed] (7,3) -- (12,3);
			\draw (1,1)  -- (11,1) -- (11,11)   -- (1,11)  -- (1,1);
			\draw (1,2)  -- (7,2) -- (7,3)   -- (1,3)  -- (1,1);
			\draw[dashed] (7,3) -- (7,12) node[anchor=south] {$w_2$};
			\draw[dashed] (1,11) -- (1,12) node[anchor=south] {$w_1$};
			\draw[dashed] (1,2) -- (0,2) node[anchor=east] {$h_2$};
			\draw[dashed] (1,3) -- (0,3) node[anchor=east] {$h_1$};
			\node (1) at(3.5,2.5) {$Q_{n }^{u}(w_1, w_2;h_1,h_2)$};
			\node (1) at(3.5,4.5) {$\widetilde{S}_{n}^{u}(w_1,h_1-1| \tau)$};
			\fill[green,nearly transparent] (1,2)  -- (7,2) -- (7,3)   -- (1,3)  -- (1,1);
			\fill[yellow,nearly transparent] (1,3)  -- (8,3) -- (1,10)   -- (1,3) ;
			\draw[decoration={brace,mirror,raise=3pt},decorate]
			(0, 12) -- node[left=6pt] {$\tau$}  (0,11);
		\end{tikzpicture}
		\caption{Illustration of $Q_{X_n }^{u}(w_1, w_2;h_1,h_2)$ (green region) and $ \widetilde{S}_{X_n}^{u}(w_1,h_1-1| \tau) $ (yellow region)}
		\label{fig1}
	\end{figure}
	Thus, for fixed $\eta \in (0,1)$ and $u=1,2,3,4$, the key step is to study the following two parameter processes
	\begin{equation}
		\label{eq:S}
		S_{X_n}^{u}(a,b| \eta)  = 
		\left\lbrace 
		\begin{array}{ll}
			\sum\limits_{i = \lfloor an \rfloor }^{ \lfloor bn \rfloor - \lfloor \eta n \rfloor -1 }  \sum\limits_{j = \lfloor an \rfloor}^{i} w_{i,j}^u X_{i+\lfloor \eta n \rfloor +1 }^T  X_{j}, & 0<a < b - \eta < 1-\eta; \\
			0, & \text{otherwise}.
		\end{array} \right. 
	\end{equation}
	where 
	\begin{align*}
		w_{i,j}^u = \1_{\{ u=1 \} } +\frac{j}{n}  \1_{\{ u=2 \} } + \frac{ i+\lfloor \eta n \rfloor +1 }{n} \1_{\{ u=3 \} } + \frac{ i+\lfloor \eta n \rfloor +1 }{n} \frac{j}{n} \1_{\{ u=4 \}}.
	\end{align*}
	Recall that Beveridge Nelson (BN) decomposition in \cite{phillips1992asymptotics} implies 
	$
	X_i = D_i - \varepsilon_i ,
	$ where $D_i = (\sum_{u = 0}^{\infty} c_{u}) \epsilon_i $, 
	$ 
	\widetilde{D}_{i} = \sum_{j =0}^{\infty} (\sum_{u = j+1}^{\infty} c_{u}) \epsilon_{i-j}
	$ and $\varepsilon_i = \widetilde{D}_{i} - \widetilde{D}_{i-1}$. By applying the BN decomposition, we would have for any $u=1,2,3,4$
	\begin{align} \label{eq:BN}
		\frac{\sqrt{2}}{n \|\Gamma \|_F}  S_{X_n}^{u}(a,b| \eta) =  \frac{\sqrt{2}}{n \|\Gamma \|_F} S_{D_n}^{u}(a,b| \eta) + R_{u}, 
	\end{align}
	where $ S_{D_n}^{u}(a,b| \eta) $ is defined similarly as in equation \eqref{eq:S} and
	it holds in $l^{\infty}([0,1]^2)$ that $ R_{u}  \rightsquigarrow 0$. The proof is postponed to Section \ref{sec:BN}. Consequently, it holds in $ l^{\infty}([0,1]^3) $ that
	\begin{align*}
		H_{X_n}(r;a,b|\eta):= \widetilde{H}_{X_n}(\lfloor rn \rfloor ; \lfloor an \rfloor , \lfloor b n \rfloor | \lfloor \eta n \rfloor) = H_{D_n}(r;a,b|\eta) + o_p(1).
	\end{align*}
	The convergence of marginals $\left(H_{D_n}(r_1; a_1, b_1|\eta), \change{...}, H_{D_n}(r_K; a_K, b_K|\eta)  \right)$ is shown in Section \ref{sec:marginal} and the tightness of $H_{D_n}(r;a,b|\eta)$ follows from the tightness of each $\frac{\sqrt{2}}{n \|\Gamma \|_F} S_{D_n}^{u}(a,b| \eta)$. When $\eta =0$, the tightness of $\frac{\sqrt{2}}{n \|\Gamma \|_F} S_{D_n}^{u}(a,b| 0)$ is given by Equation \eqref{eq:tight}. When $\eta >0$, consider $0< a<c<d<b< 1-\eta $ such that $a-c < \eta$ and $b-d <\eta$, we get
	\begin{align*}
		S_{D_n}^{u}(a,b| \eta) - S_{D_n}^{u}(c,d| \eta) =  \sum_{i = \lfloor c n \rfloor +1}^{\lfloor bn \rfloor - \lfloor \eta n \rfloor -1 } \sum_{j = \lfloor an \rfloor}^{\lfloor cn \rfloor -1}  D_{i+\lfloor \eta n \rfloor +1 }^T  D_{j} + \sum_{i = \lfloor d n \rfloor +1}^{\lfloor bn \rfloor } \sum_{j = \lfloor cn \rfloor}^{\lfloor dn \rfloor- \lfloor \eta n \rfloor -1}  D_{i }^T  D_{j}.
	\end{align*}
	Since $\{ D_i \}_{i=1}^n$ are independent, by applying Lemma \ref{lem:main}, we can obtain
	\begin{align*}
		& E\left[  \left( \frac{\sqrt{2}}{n \|\Gamma \|_F} \sum_{i = \lfloor c n \rfloor +1}^{\lfloor bn \rfloor - \lfloor \eta n \rfloor -1 } \sum_{j = \lfloor an \rfloor}^{\lfloor cn \rfloor -1}  D_{i+\lfloor \eta n \rfloor +1 }^T  D_{j} \right)^6 \right] \\
		\lesssim & \frac{1}{n^6} ( \lfloor bn \rfloor - \lfloor \eta n \rfloor - \lfloor cn \rfloor )^3( \lfloor cn \rfloor - \lfloor an \rfloor)^3 \\
		\lesssim & ((c-a)^3 + n^{-3}).
	\end{align*}
	Similarly, we can show that
	\begin{align*}
		E\left[  \left( \frac{\sqrt{2}}{n \|\Gamma \|_F} \sum_{i = \lfloor d n \rfloor +1}^{\lfloor bn \rfloor  } \sum_{j = \lfloor cn \rfloor}^{\lfloor dn \rfloor - \lfloor \eta n \rfloor -1}  D_{i+\lfloor \eta n \rfloor +1 }^T  D_{j} \right)^6 \right] \lesssim ((d-b)^3 + n^{-3}).
	\end{align*}
	Then, the asymptotic tightness of $\frac{\sqrt{2}}{n \|\Gamma \|_F} S_{D_n}^{u}(a,b| \eta)$ follows similarly from the proof of Equation \eqref{eq:tight}. So, we have $H_{D_n}(r;a,b|\eta) \rightsquigarrow G(r;a,b|\eta) \text{ in } l^{\infty}([0,1]^3)$ and Theorem \ref{thm:main} can be proved similarly as Theorem \ref{cor:T0}. 
	
	\subsubsection{Convergence of Marginals} \label{sec:marginal}
	It suffices to show that for any fixed intervals $ (a_{u,k}, b_{u,k}) \in (0,1)^2$ and constants $\alpha_{u,k} \in \mathbb{R}$, where $k=1,2, \change{...}, K$, $u=1,2,3,4$, it holds that
	\begin{align*}
		\frac{\sqrt{2}}{n \|\Gamma \|_F}  \sum_{u =1}^4 \sum_{k=1}^K  \alpha_{u,k} S_{D_n}^{u}(a_{u,k},b_{u,k}| \eta)   \overset{\mathcal{D}}{\longrightarrow}  \sum_{u =1}^4 \sum_{k=1}^K \alpha_{u,k} V_{u}(a_{u,k}, b_{u,k} | \eta). 
	\end{align*}
	Some algebra show that 
	\begin{align*}
		\frac{\sqrt{2}}{n \|\Gamma \|_F} \sum_{u =1}^4 \sum_{k=1}^K  \alpha_{u,k} S_{D_n}^{u}(a_{u,k},b_{u,k}| \eta) =  \sum_{i= \lfloor a_{\min} n\rfloor}^{\lfloor b_{\max} n\rfloor - \lfloor \eta n\rfloor -1} \widetilde{\xi}_{i} ,
	\end{align*}
	where $a_{\min} = \min_{u,k} a_{u,k}$, $ b_{\max} = \max_{u,k} b_{u,k} $,
	\begin{align*}
		\widetilde{\xi}_{i} & = \sum_{u =1}^4 \sum_{k=1}^K \1_{\{ \lfloor a_{u,k} n\rfloor \leq i \leq  \lfloor b_{u,k} n\rfloor - \lfloor \eta n\rfloor -1 \}}\alpha_{u,k} \xi_{a_{u,k},i}^{u},
	\end{align*}
	and
	\begin{align*}
		\xi_{a_{u,k},i}^u & = \frac{\sqrt{2}}{n \| \Gamma \|_F }  \sum_{j = \lfloor a_{u,k} n \rfloor}^{ i }w_{i,j}^u  D_{i + \lfloor \eta n \rfloor +1}^T D_{j}. 
	\end{align*}
	Similarly, $ \sum_{i= \lfloor a_{\min} n\rfloor}^{j} \widetilde{\xi}_{i} $ is a martingale with respect to 
	$\mathcal{F}_{j-1} = \sigma (X_{j + \lfloor \eta n \rfloor},X_{j + \lfloor \eta n \rfloor - 1}, \change{...}  )$. Then, the conditional variance is calculated as
	\begin{multline*}
		\sum\limits_{i = \lfloor a_{\min} n \rfloor }^{ \lfloor b_{\max}n \rfloor - \lfloor \eta n \rfloor -1 } E [  \widetilde{\xi}_{i}^2 | \mathcal{F}_{i-1} ] =  \sum_{u_1 =1}^4 \sum_{k_1=1}^K \sum_{u_2 =1}^4 \sum_{k_2=1}^K
		\Bigg\{ \alpha_{u_1, k_1} \alpha_{u_2, k_2} \\ \sum\limits_{i = \lfloor (a_{u_1,k_1} \vee a_{u_2, k_2})n \rfloor }^{ \lfloor (b_{u_1,k_1} \wedge b_{u_2,k_2}) n \rfloor - \lfloor \eta n \rfloor -1 }  E [ \xi_{a_{u_1, k_1},i}^{u_1} \xi_{a_{u_2, k_2},i}^{u_2}    | \mathcal{F}_{i-1} ] \Bigg\} . 
	\end{multline*}
	It can be shown that  under Assumption \ref{ass:main}, for $a' \leq  a \leq  b - \eta \leq 1 - \eta$
	\begin{align} \label{eq:finite}
		\sum\limits_{i = \lfloor an \rfloor }^{ \lfloor b n \rfloor - \lfloor \eta n \rfloor -1 }  E [ \xi_{a',i}^{u} \xi_{a,i}^{v}    | \mathcal{F}_{i-1} ] \overset{L^2}{\longrightarrow} C_{u,v}(a,b),
	\end{align}
	where, $C_{u,v}(a,b)=0$ if $a > b - \eta$; otherwise, it is given as 
	\begin{align*}
		C_{u,v}(a,b) = \lim\limits_{n \rightarrow \infty} \frac{2}{n^2 }  \sum_{i = \lfloor an \rfloor}^{ \lfloor b n \rfloor - \lfloor \eta n \rfloor -1} \sum_{j = \lfloor an \rfloor}^{ i }   w_{i,j}^u w_{i,j}^v.
	\end{align*}
	The proof is postponed to Section \ref{lem:finite}. Thus, we have
	\begin{align*}
		\sum\limits_{i = \lfloor a_{\min} n \rfloor }^{ \lfloor b_{\max}n \rfloor - \lfloor \eta n \rfloor -1 } E [  \widetilde{\xi}_{i}^2 | \mathcal{F}_{i-1} ]  \overset{p}{\longrightarrow}  \sum_{u_1 =1}^4 \sum_{k_1=1}^K \sum_{u_2 =1}^4 \sum_{k_2=1}^K
		\alpha_{u_1, k_1} \alpha_{u_2, k_2} C_{u,v}(a_{u_1,k_1} \vee a_{u_2, k_2}, b_{u_1,k_1} \wedge b_{u_2,k_2}).
	\end{align*}
	Next, we check the conditional Lindeberg condition. To this end, it suffices to show that for any fixed interval $(a,b)$ and $u \in \{ 1,2,3,4 \}$
	\begin{align*}
		\sum\limits_{i = \lfloor a n \rfloor }^{ \lfloor b n \rfloor - \lfloor \eta n \rfloor -1 } E[(\xi_{a,i}^u)^4] = o(1).
	\end{align*} 
	Due to the independence of $\{D_{i}\}_{i=1}^n$, we have 
	\begin{align*}
		E[D_{i + \lfloor \eta n \rfloor +1}^T D_{j_1} D_{i + \lfloor \eta n \rfloor +1}^T D_{j_2} D_{i + \lfloor \eta n \rfloor +1}^T D_{j_3} D_{i + \lfloor \eta n \rfloor +1}^T D_{j_4}] \neq 0,
	\end{align*}
	only if $j_1, j_2, j_3, j_4$ are pair-wise equal. In addition, from Lemma \ref{lem:main}
	\begin{align*}
		E[D_{i + \lfloor \eta n \rfloor +1}^T D_{j_1} D_{i + \lfloor \eta n \rfloor +1}^T D_{j_2} D_{i + \lfloor \eta n \rfloor +1}^T D_{j_3} D_{i + \lfloor \eta n \rfloor +1}^T D_{j_4}] \lesssim \| \Gamma \|_F^4.
	\end{align*}
	Thus, we have
	$
	\sum_{i = \lfloor a n \rfloor }^{ \lfloor b n \rfloor - \lfloor \eta n \rfloor -1 } E[(\xi_{a,i}^u)^4]  \lesssim O(1/n).
	$
	
	\subsubsection{Proof of Equation \eqref{eq:BN}} \label{sec:BN}
	First, write $ S_{X_n}^{u} (a,b| \eta) $ as
	\begin{align*}
		\frac{\sqrt{2}}{n \|\Gamma \|_F} S_{X_n}^{u}(a,b| \eta) = \frac{\sqrt{2}}{n \|\Gamma \|_F} S_{D_n}^{u}(a,b| \eta) + R_{u},
	\end{align*}
	where
	\begin{multline*}
		R_u  =  \frac{\sqrt{2}}{n \|\Gamma \|_F} \Bigg\{   - \underbrace{ \sum_{i = \lfloor an \rfloor }^{ \lfloor bn \rfloor - \lfloor \eta n \rfloor -1 } \sum_{j = \lfloor an \rfloor}^{i} w_{i,j}^u  D_{i+\lfloor \eta n \rfloor +1}^T  \varepsilon_j}_{R_{u,1}} \\ 
		-\underbrace{ \sum_{j = \lfloor an \rfloor }^{ \lfloor bn \rfloor - \lfloor \eta n \rfloor -1 }  \sum_{i = j }^{\lfloor bn \rfloor - \lfloor \eta n \rfloor - 1} w_{i,j}^u \varepsilon_{i+\lfloor \eta n \rfloor +1}^T  D_j}_{R_{u,2}}  + \underbrace{ \sum_{i = \lfloor an \rfloor }^{ \lfloor bn \rfloor - \lfloor \eta n \rfloor -1 } \sum_{j = \lfloor an \rfloor}^{i} w_{i,j}^u  \varepsilon_{i+\lfloor \eta n \rfloor +1} ^T\varepsilon_j}_{R_{u,3}} \Bigg\}.
	\end{multline*}
	We then show that each of the terms $R_{u,1}, R_{u,2}, R_{u,3}$ converges weakly to 0 in $l^{\infty}([0,1]^2)$. The proof techniques for $R_{u,1}$ and $R_{u,2}$ are very similar, here we only give details to show that $ R_{u,2} \rightsquigarrow 0$. Some algebra show that
	\begin{multline*}
		\sum_{i = j }^{\lfloor bn \rfloor - \lfloor \eta n \rfloor - 1} w_{i,j}^u \varepsilon_{i+\lfloor \eta n \rfloor +1} = \\
		\left\lbrace 
		\begin{array}{ll}
			\widetilde{D}_{ \lfloor bn \rfloor} - \widetilde{D}_{ j+ \lfloor \eta n \rfloor }, & u=1; \\
			\frac{j}{n} \left\{ \widetilde{D}_{ \lfloor bn \rfloor} - \widetilde{D}_{ j+ \lfloor \eta n \rfloor } \right\}  , & u=2; \\
			\frac{\lfloor bn \rfloor+1}{n} \widetilde{D}_{\lfloor bn \rfloor } - \frac{1}{n} \left\{  \sum_{i = j}^{ \lfloor bn \rfloor - \lfloor \eta n \rfloor -1} \widetilde{D}_{i+ \lfloor \eta n \rfloor +1} \right\} -\frac{ j+\lfloor \eta n \rfloor +1 }{n}\widetilde{D}_{ j+\lfloor \eta n \rfloor}, & u=3; \\
			\frac{ j }{n} \left\{ \frac{\lfloor bn \rfloor+1}{n} \widetilde{D}_{\lfloor bn \rfloor } - \frac{1}{n} \left\{  \sum_{i = j}^{ \lfloor bn \rfloor - \lfloor \eta n \rfloor -1} \widetilde{D}_{i+ \lfloor \eta n \rfloor +1} \right\} -\frac{ j+\lfloor \eta n \rfloor +1 }{n}\widetilde{D}_{ j+\lfloor \eta n \rfloor}  \right\}, & u=4.
		\end{array} \right. 
	\end{multline*}
	The above decomposition for $R_{u,2}$ is complex, fortunately it can be simplified with the following lemma.
	\begin{lemma} \label{lem:run} Under Assumption \ref{ass:main}, it holds in $l^{\infty}([0,1]^2)$ that
		\begin{align*}
			\sup_{a,b}	\left|\frac{\sqrt{2}}{n \|\Gamma \|_F}  \sum_{i = \lfloor an \rfloor }^{ \lfloor bn \rfloor - \lfloor \eta n \rfloor -1 }  v_i  D_i^T \widetilde{D}_{ i+\lfloor \eta n \rfloor}\right| = o_p(1),
		\end{align*}
		where $\{v_i\}$ is a sequence of constants such that $\sup_i |v_i| \leq 1$.
	\end{lemma} 
	Thus, we can throw away the following terms
	\begin{align*}
		\sum_{j = \lfloor an \rfloor }^{ \lfloor bn \rfloor - \lfloor \eta n \rfloor -1 } D_j^T \widetilde{D}_{ j+ \lfloor \eta n \rfloor }&, \sum_{j = \lfloor an \rfloor }^{ \lfloor bn \rfloor - \lfloor \eta n \rfloor -1 } \frac{j}{n} D_j^T  \widetilde{D}_{ j+ \lfloor \eta n \rfloor }, \\
		\sum_{j = \lfloor an \rfloor }^{ \lfloor bn \rfloor - \lfloor \eta n \rfloor -1 }\frac{ j+\lfloor \eta n \rfloor +1 }{n} D_j^T\widetilde{D}_{ j+\lfloor \eta n \rfloor}&, \sum_{j = \lfloor an \rfloor }^{ \lfloor bn \rfloor - \lfloor \eta n \rfloor -1 }\frac{j}{n}\frac{ j+\lfloor \eta n \rfloor +1 }{n} D_j^T\widetilde{D}_{ j+\lfloor \eta n \rfloor}.
	\end{align*}
	Next, we focus on the following term
	\begin{align*}
		\frac{\sqrt{2}}{n^2 \|\Gamma \|_F}  \sum_{i = \lfloor an \rfloor }^{ \lfloor bn \rfloor - \lfloor \eta n \rfloor -1 } \sum_{j = i}^{ \lfloor bn \rfloor - \lfloor \eta n \rfloor -1} v_i  D_{i}^T  \widetilde{D}_{j+ \lfloor \eta n \rfloor +1}  
		= \frac{\sqrt{2}}{n^2 \|\Gamma \|_F}  \sum_{i = \lfloor an \rfloor }^{ \lfloor bn \rfloor - \lfloor \eta n \rfloor -1 } \sum_{j = \lfloor an \rfloor}^{ i } v_j \widetilde{D}_{i+ \lfloor \eta n \rfloor +1}^T D_{j}. 
	\end{align*}
	Then, applying the triangle inequality
	\begin{align}
		&  \left|  \frac{\sqrt{2}}{n^2 \|\Gamma \|_F}  \sum_{i = \lfloor an \rfloor }^{ \lfloor bn \rfloor - \lfloor \eta n \rfloor -1 } \sum_{j = \lfloor an \rfloor}^{ i } v_j \widetilde{D}_{i+ \lfloor \eta n \rfloor +1}^T D_{j}\right| \\ 
		\leq &  \left|  \frac{\sqrt{2}}{n^2 \|\Gamma \|_F}  \sum_{i = 1 }^{ \lfloor bn \rfloor - \lfloor \eta n \rfloor -1 } \sum_{j = 1 }^{ i } v_j \widetilde{D}_{i+ \lfloor \eta n \rfloor +1}^T D_{j}\right| \label{eq:re1} \\ 
		+ &  \left|  \frac{\sqrt{2}}{n^2 \|\Gamma \|_F}  \sum_{i = 1 }^{ \lfloor an \rfloor  -1 } \sum_{j = 1 }^{ i } v_j \widetilde{D}_{i+ \lfloor \eta n \rfloor +1}^T D_{j}\right| \label{eq:re2} \\
		+ & \left|  \frac{\sqrt{2}}{n^2 \|\Gamma \|_F}  \sum_{i = \lfloor an \rfloor }^{ \lfloor bn \rfloor - \lfloor \eta n \rfloor -1 } \sum_{j = 1}^{ \lfloor an \rfloor -1}v_j \widetilde{D}_{i+ \lfloor \eta n \rfloor +1}^T D_{j}\right| \label{eq:re3}. 
	\end{align}
	Due to the following lemma, terms \eqref{eq:re1} and \eqref{eq:re2} are both of order $o_p(1)$ in metric space $l^{\infty}([0,1]^2)$.
	\begin{lemma} \label{eq:final2} Under Assumption \ref{ass:main}, it holds in $l^{\infty}([0,1]^2)$ that
		\begin{align*} 
			\frac{1}{n}\left\| \sup_{a} \frac{\sqrt{2}}{n \|\Gamma \|_F}  \Bigg|  \sum_{i = 1 }^{ \lfloor an \rfloor } \sum_{j = 1 }^{ i } v_j \widetilde{D}_{i+ \lfloor \eta n \rfloor +1}^T D_{j}\Bigg| \right\|_2 = o(1),
		\end{align*}
		where $\{v_j\}$ is a sequence of constants such that $\sup_j |v_j| \leq 1$.
	\end{lemma}
	Next, denote the $L^p$-norm of a random variable $X$ as $\| X \|_p := \left( E [|X|^p] \right)^{1/p}$. For any two parameter process $W(a,b)$, if $\| W(a,b) \|_6 \lesssim 1/\sqrt{n} $, then its marginals $\left(W(a_1, b_1), \change{...},W(a_k, b_k)  \right)$ converges to $0$ and from the proof of Equation \eqref{eq:tight}, it is asymptotically tight. Thus, we have $W(a,b)  \rightsquigarrow 0$. With this logic and the following lemma, term \eqref{eq:re3} is also asymptotically negligible.
	\begin{lemma} \label{lem:neg2} Under Assumption \ref{ass:main}, it holds in $l^{\infty}([0,1]^2)$ that
		\begin{align*}
			\left\| \frac{\sqrt{2}}{n^2 \|\Gamma \|_F}  \sum_{i = \lfloor an \rfloor }^{ \lfloor bn \rfloor - \lfloor \eta n \rfloor -1 } \sum_{j = 1}^{ \lfloor an \rfloor -1}v_j \widetilde{D}_{i+ \lfloor \eta n \rfloor +1}^T D_{j} \right\|_6 \lesssim \frac{1}{\sqrt{n}},
		\end{align*}
		where $\{v_i\}$ is a sequence of constants such that $\sup_i |v_i| \leq 1$.
	\end{lemma}
	Finally, using the same logic, it can be seen from the lemma below that all the other terms in $R_{u,2}$ converge weakly to $0$. 
	\begin{lemma} \label{lem:neg} Under Assumption \ref{ass:main}, it holds in $l^{\infty}([0,1]^2)$ that
		\begin{align*}
			\left\| \frac{\sqrt{2}}{n \|\Gamma \|_F}  \sum_{i = \lfloor an \rfloor }^{ \lfloor bn \rfloor - \lfloor \eta n \rfloor -1 } v_i  D_{i}^T  \widetilde{D}_{ \lfloor b n \rfloor } \right\|_6 \lesssim \frac{1}{\sqrt{n}},
		\end{align*}
		where $\{v_i\}$ is a sequence of constants such that $\sup_i |v_i| \leq 1$.
	\end{lemma}
	This concludes the proof that $R_{u,2} \rightsquigarrow 0 $. For $R_{u,3}$, notice that
	\begin{align*}
		\left|  R_{u,3} \right| \leq \left| \sum_{i = \lfloor an \rfloor }^{ \lfloor bn \rfloor - \lfloor \eta n \rfloor -1 } \sum_{j = \lfloor an \rfloor}^{i} w_{i,j}^u  \widetilde{D}_{i+\lfloor \eta n \rfloor +1}^T \varepsilon_j \right| + \left| \sum_{i = \lfloor an \rfloor }^{ \lfloor bn \rfloor - \lfloor \eta n \rfloor -1 } \sum_{j = \lfloor an \rfloor}^{i} w_{i,j}^u  \widetilde{D}_{i+\lfloor \eta n \rfloor }^T \varepsilon_j \right|.
	\end{align*}
	Comparing the above two terms with $R_{u,2}$, we have $\widetilde{D}_{i+\lfloor \eta n \rfloor}$ instead of $D_{i+\lfloor \eta n \rfloor}$. Since both Lemma \ref{lem:cov} and \ref{lem:exp} hold for any combination of $\widetilde{D}_i$ and $D_j$, we can show similarly as in the proof for $R_{u,2}$ that these two terms are of order $o_p(1)$.

	\subsubsection{Proof of  Equation \eqref{eq:finite}} \label{lem:finite}
	\begin{proof}
		Notice that 
		\begin{align*}
			\sum\limits_{i = \lfloor an \rfloor }^{ \lfloor b n \rfloor - \lfloor \eta n \rfloor -1 }  E [ \xi_{a',i}^{u} \xi_{a,i}^{v}    | \mathcal{F}_{i-1} ] = 	\sum\limits_{i = \lfloor an \rfloor }^{ \lfloor b n \rfloor - \lfloor \eta n \rfloor -1 }  E [ \xi_{a,i}^{u} \xi_{a,i}^{v}    | \mathcal{F}_{i-1} ] + \widetilde{R},
		\end{align*}
		where 
		\begin{align*}
			\widetilde{R} & =
			\frac{2}{n^2 \| \Gamma \|_F^2 }\sum\limits_{i = \lfloor an \rfloor }^{ \lfloor b n \rfloor - \lfloor \eta n \rfloor -1}  \sum_{j' = \lfloor a' n \rfloor}^{ \lfloor an \rfloor -1 }    \sum_{j = \lfloor a n \rfloor}^{ i }w_{i,j'}^u w_{i,j}^v E[ D_{i + \lfloor \eta n \rfloor +1}^T D_{j'} D_{i + \lfloor \eta n \rfloor +1}^T D_{j}|\mathcal{F}_{i-1}] \\
			& =
			\frac{2}{n^2 \| \Gamma \|_F^2 }\sum\limits_{i = \lfloor an \rfloor }^{ \lfloor b n \rfloor - \lfloor \eta n \rfloor -1}  \sum_{j' = \lfloor a' n \rfloor}^{ \lfloor an \rfloor -1 }    \sum_{j = \lfloor a n \rfloor}^{ i }w_{i,j'}^u w_{i,j}^v tr(D_{j'}^T \Gamma D_{j}).
		\end{align*}
		We then show that $\widetilde{R}$ is negligible. 
		\begin{align*}
			E[\widetilde{R}^2] & \leq  \frac{4}{n^4 \| \Gamma \|_F^4 }\sum\limits_{i_1, i_2 = \lfloor an \rfloor }^{ \lfloor b n \rfloor - \lfloor \eta n \rfloor -1}  \sum_{j'_1, j'_2 = \lfloor a' n \rfloor}^{ \lfloor an \rfloor -1 }    \sum_{j_1 = \lfloor a n \rfloor}^{ i_1 } \sum_{j_2 = \lfloor a n \rfloor}^{ i_2 } \left|E[ D_{j'_1}^T \Gamma D_{j_1} D_{j'_2}^T \Gamma D_{j_2} ] \right|  \\
			& \lesssim  \frac{4}{n^4 \| \Gamma \|_F^4 } \sum\limits_{i_1 \leq i_2 }^{ }  \sum_{j' = \lfloor a' n \rfloor}^{ \lfloor an \rfloor -1 }    \sum_{j = \lfloor a n \rfloor}^{ i_1 }  \left|E[ D_{j'}^T \Gamma D_{j} D_{j'}^T \Gamma D_{j} ] \right| \\
			& \lesssim  \frac{4}{n^4 \| \Gamma \|_F^4 } n^4 tr(\Gamma^4) \\
			& = o(1).
		\end{align*}
		Next, we focus on the first term  
		\begin{align*}
			& \sum\limits_{i = \lfloor an \rfloor }^{ \lfloor b n \rfloor - \lfloor \eta n \rfloor -1 }  E [ \xi_{a,i}^{u} \xi_{a,i}^{v}    | \mathcal{F}_{i-1} ] \\ = &\frac{2}{n^2 \|\Gamma\|_F^2 }  \sum_{i = \lfloor an \rfloor}^{ \lfloor b n \rfloor - \lfloor \eta n \rfloor -1} \sum_{j = \lfloor an \rfloor}^{ i} \sum_{j' = \lfloor an \rfloor}^{ i} w_{i,j}^u w_{i,j'}^v  E \left[  D_{i + \lfloor \eta n \rfloor +1}^T D_{j}  D_{j'}^T  D_{i + \lfloor \eta n \rfloor +1}  | \mathcal{F}_{i-1} \right] \\
			= & \frac{2}{n^2 \|\Gamma\|_F^2 }  \sum_{i = \lfloor an \rfloor}^{ \lfloor b n \rfloor - \lfloor \eta n \rfloor -1} \sum_{j = \lfloor an \rfloor}^{ i} \sum_{j' = \lfloor an \rfloor}^{ i} w_{i,j}^u w_{i,j'}^v D_{j}^T \Gamma  D_{j'},
		\end{align*}
		whose mean can be calculated as
		\begin{align*}
			\sum\limits_{i = \lfloor an \rfloor }^{ \lfloor b n \rfloor - \lfloor \eta n \rfloor -1 }  E [ \xi_{a,i}^{u} \xi_{a,i}^{v}   ] 
			= &\frac{2}{n^2 \|\Gamma\|_F^2 }  \sum_{i = \lfloor a n \rfloor}^{ \lfloor b n \rfloor - \lfloor \eta n \rfloor -1} \sum_{j = \lfloor an \rfloor}^{ i }   w_{i,j}^u w_{i,j}^v tr\left( \Gamma^2 \right)\\
			=&  \frac{2}{n^2 }  \sum_{i = \lfloor an \rfloor}^{ \lfloor b n \rfloor - \lfloor \eta n \rfloor -1} \sum_{j = \lfloor an \rfloor}^{ i }   w_{i,j}^u w_{i,j}^v \\
			\rightarrow & C_{u,v}(a,b) \text{ as }n \rightarrow \infty.
		\end{align*}
		Next, we show that the variance of the first term is asymptotically 0. Observe that 
		\begin{align*}
			\cov( D_{j_1}^T \Gamma  D_{j'_1},  D_{j_2}^T \Gamma  D_{j'_2}  ) =  \left\lbrace 
			\begin{array}{ll}
				E[ D_{1}^T \Gamma  D_{1}D_{1}^T \Gamma  D_{1} ] - tr(\Gamma^2)^2, & j_1=j_1'=j_2=j_2'; \\
				tr(\Gamma^4), & j_1 = j_2, j_1'=j_2', j_1 \neq j_1'; \\
				tr(\Gamma^4), & j_1 = j_2', j_1'=j_2, j_1 \neq j_1';\\
				0,& \text{ otherwise}.
			\end{array} \right. 
		\end{align*}
		Thus, if $j_1 = j_2, j_1'=j_2', j_1 \neq j_1'$,
		\begin{align*}
			& \var \left( \sum\limits_{i = \lfloor an \rfloor }^{ \lfloor b n \rfloor - \lfloor \eta n \rfloor -1 }  E [ \xi_{a,i}^{u} \xi_{a,i}^{v}    | \mathcal{F}_{i-1} ] \right) \\ 
			\lesssim  & \frac{4}{n^4 \| \Gamma \|_F^4 } \sum_{i_1, i_2 = \lfloor an \rfloor}^{ \lfloor b n \rfloor - \lfloor \eta n \rfloor -1} \sum_{j_1, j_1' = \lfloor an \rfloor}^{i_1} \sum_{j_2, j_2' = \lfloor an \rfloor}^{i_2}  \left| \cov( D_{j_1}^T \Gamma  D_{j'_1},  D_{j_2}^T \Gamma  D_{j'_2}  ) \right|  \\
			\lesssim & \frac{1}{n^4 \| \Gamma \|_F^4 } O(n^4) tr(\Gamma^4) \\ = & o(1),
		\end{align*}
		where the above inequality holds true since there are at most $O(n^4)$ non-zero terms. The case that $ j_1 = j_2', j_1'=j_2, j_1 \neq j_1' $ can be shown similarly. When $ j_1=j_1'=j_2=j_2' $, it has been shown in the proof of \eqref{eq:V1n} and \eqref{eq:V2n} that $ E[ D_{1}^T \Gamma  D_{1}D_{1}^T \Gamma  D_{1} ] \lesssim   \| \Gamma \|_F^4 $, thus
		\begin{align*}
			& \var \left( \sum\limits_{i = \lfloor an \rfloor }^{ \lfloor b n \rfloor - \lfloor \eta n \rfloor -1 }  E [ \xi_{a,i}^{u} \xi_{a,i}^{v}    | \mathcal{F}_{i-1} ] \right) \\ 
			\lesssim  & \frac{4}{n^4 \| \Gamma \|_F^4 } \sum_{i_1, i_2 = \lfloor an \rfloor}^{ \lfloor b n \rfloor - \lfloor \eta n \rfloor -1} \sum_{j_1 = \lfloor an \rfloor}^{\min \{i_1, i_2\} }  \left| E( D_{j_1}^T \Gamma  D_{j_1}  D_{j_1}^T \Gamma  D_{j_1}  ) \right|   \\
			\lesssim & \frac{4}{n^4 \| \Gamma \|_F^4 } n^3 \| \Gamma \|_F^4 \\
			= & o(1).
		\end{align*}
	\end{proof}
	
	\subsection{Proof of Theorem \ref{thm:alt}}
	We first state a lemma.
	\begin{lemma} \label{lem:alt}
		Under Assumption \ref{ass:main}, for any deterministic sequence of vectors $\delta_n \in \mathbb{R}^p$, 
		\begin{align*}
			& \sup_{1 \leq l<k \leq n }    \left| \frac{1}{\|\Gamma \|_F} \sum_{i=l}^{k}  X_{i+\lfloor \eta n \rfloor +1}^T  \delta_n  \right| = o_p\left( \frac{\sqrt{n}\| \delta_n \|_2 }{ \|\Gamma \|_F^{1/2} } \right)
			\\
			& \sup_{1 \leq l<k \leq n }    \left|  \frac{1}{\|\Gamma \|_F} \sum_{i=l}^{k} \frac{ i+\lfloor \eta n \rfloor +1 }{n}  X_{i+\lfloor \eta n \rfloor +1}^T  \delta_n  \right| = o_p\left( \frac{\sqrt{n}\| \delta_n \|_2 }{ \|\Gamma \|_F^{1/2} } \right)
			\\
			& \sup_{1 \leq l<k \leq n}    \left|  \frac{1}{\|\Gamma \|_F} \sum_{j=l}^{k}   \delta_n^T X_{j}  \right| =  
			o_p\left( \frac{\sqrt{n}\| \delta_n \|_2 }{ \|\Gamma \|_F^{1/2} } \right)
		\end{align*}
		and 
		\begin{align*}
			& \sup_{1 \leq l<k \leq n}    \left|  \frac{1}{\|\Gamma \|_F} \sum_{j=l}^{k} \frac{j}{n}   \delta_n^T X_{j}  \right| =  o_p\left( \frac{\sqrt{n}\| \delta_n \|_2 }{ \|\Gamma \|_F^{1/2} } \right)
		\end{align*}
	\end{lemma}
	
	Given the bounds above we have
	\begin{multline*}
		\frac{\sqrt{2}}{n \|\Gamma \|_F} S_{Y_n}^{u}(a,b| \eta)  = \frac{\sqrt{2}}{n \|\Gamma \|_F} S_{X_n}^{u}(a,b| \eta)
		+ o_p\left( \frac{\sqrt{n}\| \delta_n \|_2 }{ \|\Gamma \|_F^{1/2} } \right)
		\\
		+\left\lbrace  \begin{array}{ll}
			\left( \frac{\sqrt{2}}{n^2} \sum_{i = \lfloor (\phi \vee a) n \rfloor}^{ \lfloor b n \rfloor - \lfloor \eta n \rfloor -1} \sum_{j = \lfloor (\phi \vee a) n \rfloor}^{ i }   w_{i,j}^u  \right)   \frac{n\|\delta_n \|_2^2}{\|\Gamma \|_F}, & \text{ if }  \phi < b - \eta; \\
			0, & \text{ otherwise },
		\end{array} \right. 
	\end{multline*}
	Recall that in the proof of Theorem \ref{thm:main}, we decompose $ \widetilde{H}_{Y_n}(\lfloor \phi n \rfloor ;1, n|\lfloor \eta n \rfloor) $ as a continuous functional of $ \frac{\sqrt{2}}{n \|\Gamma \|_F} S_{Y_n}^{u}(a,b| \eta) $, then by replacing each $ \frac{\sqrt{2}}{n \|\Gamma \|_F} S_{Y_n}^{u}(a,b| \eta)$ with the above decomposition, Theorem \ref{thm:alt} follows straight-forwardly under the case that $ n \| \delta_n \|_2^2 / \| \Gamma \|_F \rightarrow c^2 \in (0, \infty)$ or $ n \| \delta_n \|_2^2 / \| \Gamma \|_F \rightarrow 0$. For the case that $ n \| \delta_n \|_2^2 / \| \Gamma \|_F \rightarrow \infty$, it follows from  Lemma \ref{lem:alt} that 
	\begin{align*}
		\widetilde{H}_{Y_n}(\lfloor \phi n \rfloor ;1, n|\lfloor \eta n \rfloor) =&  \widetilde{H}_{X_n}(\lfloor \phi n \rfloor ;1, n| \lfloor \eta n \rfloor) + o_p\left( \frac{\sqrt{n}\| \delta_n \|_2 }{ \|\Gamma \|_F^{1/2} } \right)  
		\\ 
		&+ \sum_{\stackrel{1 \leq j_1 , j_3 \leq  \lfloor \phi n \rfloor}{|j_1-j_3|>\lfloor \eta n \rfloor} } \sum_{\stackrel{\lfloor \phi n \rfloor + \lfloor \eta n \rfloor + 1 \leq j_2, j_4 \leq n}{|j_2-j_4|>\lfloor \eta n \rfloor} } \frac{ \| \delta_n \|_2^2}{ n^3\| \Gamma \|_F } ,
	\end{align*}
	which implies that $\widetilde{H}_{Y_n}(\lfloor \phi n \rfloor ;1, n|\tau)$ goes to infinity in probability. Then, the result follows similarly as in the proof of Theorem \ref{cor:T_alt}. \hfill $\Box$

	\subsection{Proof of Auxiliary Lemmas}
	\subsubsection{Proof of Lemma \ref{lem:cov}}
	\begin{proof}
		Firstly, for the case $j=0$, let $i_{min} = \min \{ i_1, i_2, \change{...}, i_{h} \}$, $ \widetilde{c}_i = \sum_{u = i+1}^{\infty} c_{u} $, $\widetilde{c}_{i,(l,\cdot)}$ be the $l$-th row of $\widetilde{c}_i$ and $\widetilde{c}_{i,(l,k)}$ be the $(l,k)$-th entry of $ \widetilde{c}_i $, the absolute value of cumulant can be bounded as 
		\begin{align*}
			&\sum_{l_1, \change{...},l_h=1}^p  |cum( \widetilde{D}_{i_1, l_1},  \change{...},  \widetilde{D}_{i_{h}, l_{h}})| \\
			\leq & \sum_{l_1, \change{...},l_h=1}^p  \sum_{j =0}^{\infty} \sum_{k_1, \change{...}, k_h = 1}^p \left|  \prod_{g=1}^h \widetilde{c}_{i_g-i_{min} + j, (l_g, k_g)}\right| |cum(\epsilon_{i_{min} - j, k_1}, \change{...}, \epsilon_{i_{min} - j, k_h})|  \\
			= &   \sum_{k_1, \change{...}, k_h = 1}^p \left(\sum_{l_1, \change{...},l_h=1}^p  \sum_{j =0}^{\infty}  \left| \prod_{g=1}^h \widetilde{c}_{i_g-i_{min} + j, (l_g, k_g)} \right| \right) |cum(\epsilon_{0, k_1}, \change{...}, \epsilon_{0, k_h}) |  \\
			\leq &  \left( \sum_{j =0}^{\infty} \prod_{g=1}^h \|\widetilde{c}_{i_g-i_{min} + j}\|_1 \right)\sum_{k_1, \change{...}, k_h = 1}^p |cum(\epsilon_{0, k_1}, \change{...}, \epsilon_{0, k_h})| \\
			\lesssim & \left( \sum_{j =0}^{\infty} \left(  \sum_{u =j+1}^{\infty} \| c_{u} \|_1\right)^h \right) \| \Gamma \|_{F}^h,
		\end{align*}
		which proves the case $ j=0 $. For other cases, the results can be shown similarly. To bound the square cumulant, notice that under assumption \ref{ass:main} (\ref{A41A1}), it can be easily shown that there exists a constant $C$ such that   
		\begin{align*}
			\max_{1 \leq k_1, \change{...}, k_h \leq p } |cum(\epsilon_{0, k_1}, \change{...}, \epsilon_{0, k_h})| \leq C.
		\end{align*}
		Next, for any $(l_1, \change{...}, l_h)$, we can bound $ |cum( \widetilde{D}_{i_1, l_1},  \change{...},  \widetilde{D}_{i_{h}, l_{h}})| $ as follows
		\begin{align*}
			&  |cum( \widetilde{D}_{i_1, l_1},  \change{...},  \widetilde{D}_{i_{h}, l_{h}})| \\
			\leq &   \sum_{j =0}^{\infty} \sum_{k_1, \change{...}, k_h = 1}^p \prod_{g=1}^h |\widetilde{c}_{i_g-i_{min} + j, (l_g, k_g)} cum(\epsilon_{i_{min} - j, k_1}, \change{...}, \epsilon_{i_{min} - j, k_h})|  \\
			\leq & C \sum_{j =0}^{\infty}  \sum_{k_1, \change{...}, k_h = 1}^p \prod_{g=1}^h |\widetilde{c}_{i_g-i_{min} + j, (l_g, k_g)}|   \\
			\leq & C \sum_{j =0}^{\infty} \left( \sum_{u =j+1}^{\infty} \| c_{u} \|_{\infty} \right)^h < \infty \text{ by assumption \ref{ass:main} (\ref{A41A5})}.
		\end{align*}
		The proof is similar when $j \neq 0$, thus there exists a constant $C'$ such that for any $(l_1, \change{...}, l_h)$, we have $|cum( D_{i_1, l_1}, \change{...}, D_{i_j, l_j}, \widetilde{D}_{i_{j+1}, l_{j+1}}, \change{...},  \widetilde{D}_{i_{h}, l_{h}})|\leq C'$.
		As a consequence, it holds that
		\begin{multline*}
			\sum_{l_1,l_2,\change{...},l_h=1}^p cum^2( D_{i_1, l_1}, \change{...}, D_{i_j, l_j}, \widetilde{D}_{i_{j+1}, l_{j+1}}, \change{...},  \widetilde{D}_{i_{h}, l_{h}})/C'^2 \leq \\
			\sum_{l_1,l_2,\change{...},l_h=1}^p |cum( D_{i_1, l_1}, \change{...}, D_{i_j, l_j}, \widetilde{D}_{i_{j+1}, l_{j+1}}, \change{...},  \widetilde{D}_{i_{h}, l_{h}})|/C' \lesssim \| \Gamma \|_F^h.
		\end{multline*}
	\end{proof}
	
	\subsubsection{Proof of Lemma \ref{lem:exp}}
	\begin{proof} We show that $\{ X_i \}$, $\{ D_i \}$, $\{ \widetilde{D}_i \}$,$\{ \varepsilon_i \}$ are all UGMC(8), then the result follows from Remark 9.6 \cite{wang2019hypothesis}. Firstly, we have
		\begin{align*}
			& E \left[ ( X_{i, l} - X_{i,l}' )^8 \right] \\ 
			= & E \left[ \left(  \sum_{j =i}^{\infty} c_{j, (l,\cdot)}^T (\epsilon_{i-j} - \epsilon_{i-j}' ) \right) ^8 \right] \\
			= &  \sum_{j_1, \change{...},  j_{8}=i}^{\infty} E \left[   \left(  c_{j_1, (l,\cdot)}^T (\epsilon_{i-j_1} - \epsilon_{i-j_1}' ) \right)  \change{...}   \left( c_{j_8, (l,\cdot)}^T (\epsilon_{i-j_8} - \epsilon_{i-j_8}' ) \right) \right] \\
			\leq & \sum_{j_1, \change{...},  j_{8}=i}^{\infty} \left( E \left[  \left( c_{j_1, (l,\cdot)}^T (\epsilon_{i-j_1} - \epsilon_{i-j_1}' ) \right)^{8} \right]\right)^{1/8} \change{...} \left(E \left[  \left( c_{j_8, (l,\cdot)}^T (\epsilon_{i-j_8} - \epsilon_{i-j_8}' ) \right)^{8} \right]\right)^{1/8}.
		\end{align*}
		Next, we bound the term  $ E [ ( c_{j_1, (l,\cdot)}^T (\epsilon_{i-j_1} - \epsilon_{i-j_1}' ) )^{8} ] $ as an example.
		\begin{align*}
			& E \left[  \left( c_{j_1, (l,\cdot)}^T (\epsilon_{i-j_1} - \epsilon_{i-j_1}' ) \right)^{8} \right] \\
			= &E \left[\left(  \sum_{k = 1}^p c_{j_1, (l,k)} (\epsilon_{i-j_1, k} - \epsilon_{i-j_1, k}' ) \right)^8 \right] \\
			= &  \sum_{k_1, \change{...}, k_8} \left( \prod_{g=1}^8 c_{j_1, (l,k_g)} \right) E \left[ \prod_{g=1}^8 (\epsilon_{i-j_1, k_g} - \epsilon_{i-j_1, k_g}' ) \right] \\
			\lesssim & \left(  \sum_{k = 1}^p c_{j_1, (l,k)} \right)^8 \leq \| c_{j_1} \|_{\infty}^8.
		\end{align*}
		Thus, we have
		$
		E [ ( X_{i, l} - X_{i,l}' )^8 ] \leq ( \sum_{j =i}^{\infty} \| c_{j} \|_{\infty} )^8.
		$
		It can be shown similarly that $\sup_l E [ |X_{0,l}|^8 ] \leq C^8$ for some constant $C$, which concludes that $\{X_i\}$ is UGMC(8). From Lemma 9.4 [\cite{wang2019hypothesis}], $\{ D_i \}$, $\{ \widetilde{D}_i \}$,$\{ \varepsilon_i \}$ are also UGMC(8).
	\end{proof}
	
	\subsubsection{Proof of Lemma \ref{lem:main}}
	\begin{proof}
		Let $\pi$ be any disjoint partition over the set $\{ l_1,l_2,l_3,l_4, l_5, l_6 \}$ such that for any $B \in \pi$, $|B| \neq 1$. Under Assumption \ref{ass:main},
		\begin{align*}
			& E \left[ (D_{i}^T D_{j})^6 \right] \\
			= &  \sum_{l_1,l_2,l_3,l_4, l_5, l_6} E[ D_{i,l_1} D_{i,l_2} D_{i,l_3} D_{i,l_4} D_{i,l_5} D_{i, l_6} ] E[D_{j,l_1} D_{j,l_2} D_{j,l_3} D_{j,l_4} D_{j,l_5} D_{j, l_6} ]  \\
			= &  \sum_{l_1,l_2,l_3,l_4, l_5, l_6} (E[ D_{i,l_1} D_{i,l_2} D_{i,l_3} D_{i,l_4} D_{i,l_5} D_{i, l_6} ])^2   \\
			= &  \sum_{l_1,l_2,l_3,l_4, l_5, l_6} \left(\sum_{\pi} \prod_{B \in \pi} cum(D_{i,l_k} : l_k \in B ) \right)^2   \\
			\lesssim &  \sum_{l_1,l_2,l_3,l_4, l_5, l_6}  \sum_{\pi} \prod_{B \in \pi} cum^2(D_{i,l_k} : l_k \in B )  \text{ by Cauchy's inequality} \\
			\lesssim & \sum_{\pi} \prod_{B \in \pi} \left\lbrace \sum_{l_k \in B} \sum_{l_k=1}^p cum^2(D_{i,l_k} : l_k \in B ) \right\rbrace \\
			\lesssim &   \sum_{\pi} \prod_{B \in \pi} \| \Gamma \|^{|B|}_F \hspace{2cm} \text{ by Lemma \ref{lem:cov}} \\
			\lesssim & \| \Gamma \|^6_F.
		\end{align*}
		
	\end{proof}
	
	\subsubsection{Proof of Lemma \ref{lem:run}}
	\begin{proof}
		The triangle inequality implies that
		\begin{multline*}
			\sup_{0 < a < b- \eta < 1 -\eta} \left| \frac{\sqrt{2}}{n \|\Gamma \|_F} \sum_{i = \lfloor an \rfloor }^{ \lfloor bn \rfloor - \lfloor \eta n \rfloor -1 } v_i D_i^T \widetilde{D}_{i+\lfloor \eta n \rfloor} \right| \leq \\ \sup_{\eta <b < 1} \left| \frac{\sqrt{2}}{n \|\Gamma \|_F} \sum_{i = 1 }^{ \lfloor bn \rfloor - \lfloor \eta n \rfloor -1 }v_i D^T_{i} \widetilde{D}_{i+\lfloor \eta n \rfloor} \right| + \sup_{0 < a < 1 - \eta} \left| \frac{\sqrt{2}}{n \|\Gamma \|_F} \sum_{i = 1 }^{ \lfloor an \rfloor  -1 } v_iD^T_{i} \widetilde{D}_{i+\lfloor \eta n \rfloor} \right|.
		\end{multline*}	
		It is sufficient to show that
		\begin{align*}
			\left\| \sup_{a \in (0, 1-\eta)} \Bigg| \frac{\sqrt{2}}{n \|\Gamma \|_F} \sum_{i = 1 }^{ \lfloor an \rfloor }  v_i D^T_{ i} \widetilde{D}_{i+\lfloor \eta n \rfloor} \Bigg| \right\|_2 = o(1).
		\end{align*} 
		To prove this, the idea is to use Proposition 1 in \cite{wu2007strong}, for any $n = 2^d$
		\begin{align*}
			\left\| \sup_{a \in (0, 1-\eta)} \Bigg| \frac{\sqrt{2}}{n \|\Gamma \|_F} \sum_{i = 1 }^{ \lfloor an \rfloor }  v_i D^T_{ i} \widetilde{D}_{i+\lfloor \eta n \rfloor} \Bigg| \right\|_2  \leq \sum\limits_{h=0}^d \left[ \sum_{u =1}^{2^{d-h}} \left\| \frac{\sqrt{2}}{2^{d} \|\Gamma \|_F} \sum_{i = 2^h(u-1)+1 }^{ 2^h u } v_i D^T_{ i} \widetilde{D}_{i+\lfloor \eta n \rfloor} \right\|_2^2 \right]^{1/2}.
		\end{align*}
		Applying Lemma \ref{lem:exp} and \ref{lem:cov},
		\begin{align*}
			& \left\| \frac{\sqrt{2}}{2^{d} \|\Gamma \|_F} \sum_{i = 2^h(u-1)+1 }^{ 2^h u } v_i D^T_{ i} \widetilde{D}_{i+\lfloor \eta n \rfloor} \right\|_2^2 \\
			\leq & \frac{2}{2^{2d} \|\Gamma \|_F^2} \sum_{i_1, i_2 = 2^h(u-1)+1 }^{ 2^h u } \left| E \left[ D^T_{ i_1} \widetilde{D}_{i_1+\lfloor \eta n \rfloor}D^T_{ i_2} \widetilde{D}_{i_2 +\lfloor \eta n \rfloor}  \right] \right| \\
			\leq &  \frac{2}{2^{2d} \|\Gamma \|_F^2} \sum_{i_1, i_2 = 2^h(u-1)+1 }^{ 2^h u } \sum_{l_1, l_2=1}^p \Bigg\{ |cum(D_{i_1, l_1},\widetilde{D}_{i_1+\lfloor \eta n \rfloor, l_1}, D_{i_2, l_2},\widetilde{D}_{i_2+\lfloor \eta n \rfloor, l_2} )| \\
			& \hspace{2cm} + |cum(  D_{ i_1, l_1} \widetilde{D}_{i_1+\lfloor \eta n \rfloor, l_1} )  cum(  D_{ i_2, l_2} \widetilde{D}_{i_2+\lfloor \eta n \rfloor, l_2} )| \\
			& \hspace{2cm} + |cum(D_{i_1, l_1}, D_{i_2, l_2})  cum(\widetilde{D}_{i_1+\lfloor \eta n \rfloor, l_1}, \widetilde{D}_{i_2+\lfloor \eta n \rfloor, l_2} )| \\
			& \hspace{2cm} +  |cum(D_{i_1, l_1}, \widetilde{D}_{i_2+\lfloor \eta n \rfloor, l_2} )cum(D_{i_2, l_2}, \widetilde{D}_{i_1+\lfloor \eta n \rfloor, l_1} )|  \Bigg\} \\
			\lesssim & \frac{2}{2^{2d} \|\Gamma \|_F^2} \sum_{i_1, i_2 = 2^h(u-1)+1 }^{ 2^h u } \bigg\{ \rho^{|i_1-i_2|} p^2 \rho^{ \lfloor \eta n \rfloor } + p^2\rho^{2 \lfloor \eta n \rfloor} \\
			& \hspace{3cm} + \rho^{|i_1-i_2|} \| \Gamma \|_F^2 + \rho^{|i_1-i_2 -\lfloor \eta n \rfloor |} \|\Gamma \|_F^2 \bigg\} \\
			\lesssim & 2^{h-2d}.
		\end{align*}
		To continue the calculation, 
		\begin{align*}
			\left\| \sup_{a \in (0, 1-\eta)} \Bigg| \frac{\sqrt{2}}{n \|\Gamma \|_F} \sum_{i = 1 }^{ \lfloor an \rfloor }  v_i D^T_{ i} \widetilde{D}_{i+\lfloor \eta n \rfloor} \Bigg| \right\|_2  \lesssim \sum\limits_{h=0}^d \left[ \sum_{u =1}^{2^{d-h}} 2^{h-2d} \right]^{1/2} = O\left(  \frac{d}{2^{d/2}}\right),
		\end{align*}
		which concludes the case when $n=2^d$. For arbitrary integer $n$, the statement follows from the fact that there exists $d$ such that $2^{d-1} \leq n < 2^{d}$ and 
		\begin{multline*}
			\left\| \sup_{a \in (0, 1-\eta)} \Bigg| \frac{\sqrt{2}}{n \|\Gamma \|_F} \sum_{i = 1 }^{ \lfloor an \rfloor }  v_i D^T_{ i} \widetilde{D}_{i+\lfloor \eta n \rfloor} \Bigg| \right\|_2 \\ \leq 2 \left\| \sup_{a \in (0, 1-\eta)} \Bigg| \frac{\sqrt{2}}{2^{d} \|\Gamma \|_F} \sum_{i = 1 }^{ \lfloor a 2^d \rfloor }  v_i D^T_{ i} \widetilde{D}_{i+\lfloor \eta n \rfloor} \Bigg| \right\|_2 =  O\left(  \frac{d}{2^{d/2}}\right) = o(1).
		\end{multline*} 
	\end{proof}
	
	\subsection{Proof of Lemma \ref{eq:final2}}
	Using Proposition 1 in \cite{wu2007strong}, for any $n = 2^d$
	\begin{multline} \label{eq:final}
		\left\| \sup_{a} \frac{\sqrt{2}}{n \|\Gamma \|_F}  \Bigg|  \sum_{i = 1 }^{ \lfloor an \rfloor } \sum_{j = 1 }^{ i } v_j \widetilde{D}_{i+ \lfloor \eta n \rfloor +1}^T D_{j}\Bigg| \right\|_2 \leq \\
		\sum\limits_{h=0}^d \left[ \sum_{u =1}^{2^{d-h}} \left\| \frac{\sqrt{2}}{2^{d} \|\Gamma \|_F} \sum_{i = 2^h(u-1)+1 }^{ 2^h u } \sum_{j = 1 }^{ i }v_j \widetilde{D}_{i+ \lfloor \eta n \rfloor +1}^T D_{j} \right\|_2^2 \right]^{1/2}.
	\end{multline}
	For the summands inside the bracket,
	\begin{align*}
		& \left\| \frac{\sqrt{2}}{2^{d} \|\Gamma \|_F} \sum_{i = 2^h(u-1)+1 }^{ 2^h u } \sum_{j = 1 }^{ i }v_j \widetilde{D}_{i+ \lfloor \eta n \rfloor +1}^T D_{j} \right\|_2^2 \\
		\leq &   \frac{2}{2^{2d} \|\Gamma \|_F^2} \sum_{i_1, i_2 = 2^h(u-1)+1 }^{ 2^h u } \sum_{j_1 = 1 }^{ i_1 } \sum_{j_2 = 1 }^{ i_2 } \sum_{l_1, l_2=1}^p \left|  E\left[  \widetilde{D}_{i_1+ \lfloor \eta n \rfloor +1, l_1} D_{j_1, l_1}\widetilde{D}_{i_2+ \lfloor \eta n \rfloor +1, l_2} D_{j_2, l_2}  \right] \right|.
	\end{align*}
	Express the expectation using cumulants
	\begin{align*}
		& \left|  E\left[  \widetilde{D}_{i_1+ \lfloor \eta n \rfloor +1, l_1} D_{j_1, l_1}\widetilde{D}_{i_2+ \lfloor \eta n \rfloor +1, l_2} D_{j_2, l_2}  \right] \right| \\
		\leq & |cum(\widetilde{D}_{i_1+ \lfloor \eta n \rfloor +1, l_1}, D_{j_1, l_1},\widetilde{D}_{i_2+ \lfloor \eta n \rfloor +1, l_2}, D_{j_2, l_2})| \\
		+ & |cum(\widetilde{D}_{i_1+ \lfloor \eta n \rfloor +1, l_1}, D_{j_1, l_1}) cum(\widetilde{D}_{i_2+ \lfloor \eta n \rfloor +1, l_2}, D_{j_2, l_2})| \\
		+ & |cum(\widetilde{D}_{i_1+ \lfloor \eta n \rfloor +1, l_1}\widetilde{D}_{i_2+ \lfloor \eta n \rfloor +1, l_2})cum( D_{j_1, l_1}, D_{j_2, l_2})| \\
		+ & |cum(\widetilde{D}_{i_1+ \lfloor \eta n \rfloor +1, l_1}, D_{j_2, l_2}) cum(D_{j_1, l_1},\widetilde{D}_{i_2+ \lfloor \eta n \rfloor +1, l_2})|.
	\end{align*}
	By using Lemma \ref{lem:exp} and \ref{lem:cov}, we have 
	\begin{align*}
		& \sum_{l_1, l_2=1}^p |cum(\widetilde{D}_{i_1+ \lfloor \eta n \rfloor +1, l_1}, D_{j_1, l_1},\widetilde{D}_{i_2+ \lfloor \eta n \rfloor +1, l_2}, D_{j_2, l_2})| \lesssim \rho^{i_1 \vee i_2  - j_1 \wedge j_2} p^2 \rho^{\lfloor \eta n \rfloor}, \\
		&\sum_{l_1, l_2=1}^p |cum(\widetilde{D}_{i_1+ \lfloor \eta n \rfloor +1, l_1}, D_{j_1, l_1}) cum(\widetilde{D}_{i_2+ \lfloor \eta n \rfloor +1, l_2}, D_{j_2, l_2})| \lesssim \rho^{i_1  - j_1} \rho^{i_2 - j_2} p^2 \rho^{\lfloor \eta n \rfloor}, \\
		&\sum_{l_1, l_2=1}^p  |cum(\widetilde{D}_{i_1+ \lfloor \eta n \rfloor +1, l_1}\widetilde{D}_{i_2+ \lfloor \eta n \rfloor +1, l_2})cum( D_{j_1, l_1}, D_{j_2, l_2})| \lesssim \rho^{|j_1  - j_2|} \| \Gamma \|_F^2, \\
		&\sum_{l_1, l_2=1}^p  |cum(\widetilde{D}_{i_1+ \lfloor \eta n \rfloor +1, l_1}, D_{j_2, l_2}) cum(D_{j_1, l_1},\widetilde{D}_{i_2+ \lfloor \eta n \rfloor +1, l_2})| \lesssim \rho^{|i_1 +\lfloor \eta n \rfloor - j_2| } \| \Gamma \|_F^2.
	\end{align*}
	Since $p^2 \rho^{\lfloor \eta n \rfloor} = O(\| \Gamma \|_F^2)$, some straightforward calculation shows that
	\begin{align*}
		& \left\| \frac{\sqrt{2}}{2^{d} \|\Gamma \|_F} \sum_{i = 2^h(u-1)+1 }^{ 2^h u } \sum_{j = 1 }^{ i } \widetilde{D}_{i+ \lfloor \eta n \rfloor +1}^T D_{j} \right\|_2^2 \\
		\lesssim  & \frac{1}{2^{2d}} \sum_{i_1, i_2 = 2^h(u-1)+1 }^{ 2^h u } \sum_{j_1 = 1 }^{ i_1 } \sum_{j_2 = 1 }^{ i_2 } \left\{ \rho^{i_1 \vee i_2  - j_1 \wedge j_2} + \rho^{i_1  - j_1} \rho^{i_2 - j_2} + \rho^{|j_1  - j_2|} +  \rho^{|i_1 +\lfloor \eta n \rfloor - j_2| } \right\} \\
		\lesssim & \frac{1}{2^{2d}} 2^{3h} u.
	\end{align*}
	Plugging the above bound into Equation \eqref{eq:final} results
	\begin{align*}
		\frac{1}{n} \left\| \sup_{a} \frac{\sqrt{2}}{n \|\Gamma \|_F}  \Bigg|  \sum_{i = 1 }^{ \lfloor an \rfloor } \sum_{j = 1 }^{ i } v_j \widetilde{D}_{i+ \lfloor \eta n \rfloor +1}^T D_{j}\Bigg| \right\|_2 \lesssim \frac{1}{2^d} \sum\limits_{h=0}^d \left[ \frac{1}{2^{2d}} 2^{3h} \sum_{u =1}^{2^{d-h}}   u \right]^{1/2} \lesssim 2^{-d/2},
	\end{align*}
	which concludes Equation \eqref{eq:final2} when $n=2^d$. For arbitrary $n$, there exists $d$ such that $2^{d-1} \leq n < 2^d $ and 
	\begin{multline*}
		\frac{1}{n} \left\| \sup_{a} \frac{\sqrt{2}}{n \|\Gamma \|_F}  \Bigg|  \sum_{i = 1 }^{ \lfloor an \rfloor } \sum_{j = 1 }^{ i } v_j \widetilde{D}_{i+ \lfloor \eta n \rfloor +1}^T D_{j}\Bigg| \right\|_2 \leq \\
		4 \frac{1}{2^{d}} \left\| \sup_{a} \frac{\sqrt{2}}{2^{d} \|\Gamma \|_F}  \Bigg|  \sum_{i = 1 }^{ \lfloor a2^{d} \rfloor } \sum_{j = 1 }^{ i } v_j \widetilde{D}_{i+ \lfloor \eta n \rfloor +1}^T D_{j}\Bigg| \right\|_2 \lesssim 2^{-d/2} = o(1). 
	\end{multline*}
	
	\subsubsection{Proof of Lemma \ref{lem:neg2}}
	For term \eqref{eq:re3}, we prove that
	\begin{align*}
		\left\|  \frac{\sqrt{2}}{n^2 \|\Gamma \|_F}  \sum_{i = \lfloor an \rfloor }^{ \lfloor bn \rfloor - \lfloor \eta n \rfloor -1 } \sum_{j = 1}^{ \lfloor an \rfloor -1}v_j \widetilde{D}_{i+ \lfloor \eta n \rfloor +1}^T D_{j} \right\|_6 \lesssim \frac{1}{\sqrt{n}}.
	\end{align*}
	Firstly, notice that 
	\begin{multline*}
		E \left[ \left( \frac{\sqrt{2}}{n^2 \|\Gamma \|_F}  \sum_{i = \lfloor an \rfloor }^{ \lfloor bn \rfloor - \lfloor \eta n \rfloor -1 } \sum_{j = 1}^{ \lfloor an \rfloor -1}v_j \widetilde{D}_{i+ \lfloor \eta n \rfloor +1}^T D_{j} \right)^6 \right] \\ 
		\lesssim  \frac{1}{n^{12} \|\Gamma \|_F^6} \sum_{i_1, \change{...}, i_6 = \lfloor an \rfloor }^{ \lfloor bn \rfloor - \lfloor \eta n \rfloor -1 } \sum_{j_1 = 1}^{ \lfloor an \rfloor -1} \cdots  \sum_{j_6 = 1}^{ \lfloor an \rfloor -1} \left|  E\left[ \prod_{k=1}^6 \widetilde{D}_{i_k+ \lfloor \eta n \rfloor +1}^T D_{j_k}   \right] \right|.
	\end{multline*}
	Then,  let $\pi$ be any partition of the index set $$ 
	\{  (i_1+ \lfloor \eta n \rfloor +1 ,l_1), (j_1,l_1), \change{...}, (i_6+ \lfloor \eta n \rfloor +1 ,l_6), (j_6,l_6) \}
	$$ 
	such that $|B|>1$ for any $B \in \pi$. By the  moment-cumulant formula, 
	\begin{align*}
		\left| E\left[ \prod_{k=1}^6 \widetilde{D}_{i_k+ \lfloor \eta n \rfloor +1}^T D_{j_k}   \right] \right| & = \left|  \sum\limits_{l_1, \change{...}, l_6 =1}^p E\left[ \prod_{k=1}^6 \widetilde{D}_{i_k+ \lfloor \eta n \rfloor +1, l_k} D_{j_k, l_k}   \right] \right| \\
		& \leq \sum\limits_{l_1, \change{...}, l_6 =1}^p \sum_{\pi} \prod_{B \in \pi} |cum(Z_{i,l} : (i,l) \in B )|,
	\end{align*}
	where $Z_{i,l} = D_{i,l} \text{ if } (i,l) \in \{ (j_k,l_k) \}_{k=1}^6;$ otherwise $ Z_{i,l}=  \widetilde{D}_{i,l}$. Set $\mathbb{I}_1 = \{  (i_k+ \lfloor \eta n \rfloor +1 ,l_k) \}_{k=1}^6 $, $\mathbb{I}_2 = \{  (j_k, l_k) \}_{k=1}^6 $ and 
	\begin{align*}
		& \pi_1 := \{A| A \in \pi,  A \subseteq \mathbb{I}_1 \},  \\
		& \pi_2: = \{A| A \in \pi, A \nsubseteq \mathbb{I}_1, A \nsubseteq \mathbb{I}_2 \}, \\
		& \pi_3 := \{A | A \in \pi,  A \subseteq \mathbb{I}_2 \}.
	\end{align*}
	Here, we note that $\pi = \pi_1 \cup \pi_2 \cup \pi_3$. For notational convenience, write
	\begin{align*}
		\begin{array}{ll}
			B = \{ (\tilde{i}_1+ \lfloor \eta n \rfloor +1 ,\tilde{l}_1), \change{...}, (\tilde{i}_{|B|}+ \lfloor \eta n \rfloor +1 ,\tilde{l}_{|B|}) \}, & \text{if } B \in \pi_1; \\
			B = \{ (\tilde{j}_1, \tilde{l}_1), \change{...}, (\tilde{j}_{|B|}, \tilde{l}_{|B|}) \}, & \text{if } B \in \pi_3,
		\end{array}
	\end{align*}
	and if $B \in \pi_2$, we can represent the set $B$  as
	\begin{align*}
		B = \big\{ (\tilde{i}_1+ \lfloor \eta n \rfloor +1 ,\tilde{l}_1), \change{...}, (\tilde{i}_{\tilde{k}}+ \lfloor \eta n \rfloor +1 ,\tilde{l}_{\tilde{k}}),  (\tilde{j}_1, \tilde{l}_1'), \change{...}, (\tilde{j}_{|B|-\tilde{k}}, \tilde{l}_{|B|-\tilde{k}}') \big\},
	\end{align*}
	where $0 < \tilde{k} < |B|$. Then, we can decompose the product of cumulants as
	\begin{align*}
		&\prod_{B \in \pi} cum(Z_{i,l} : (i,l) \in B ) \\
		= & \left\lbrace  \prod_{B \in \pi_1}  cum(\widetilde{D}_{\tilde{i}_1+ \lfloor \eta n \rfloor +1, \tilde{l}_1}, \change{...},\widetilde{D}_{\tilde{i}_{|B|}+ \lfloor \eta n \rfloor +1, \tilde{l}_{|B|}} ) \right\rbrace \\
		&  \times  \left\lbrace  \prod_{B \in \pi_2}  cum(\widetilde{D}_{\tilde{i}_1+ \lfloor \eta n \rfloor +1, \tilde{l}_1}, \change{...},\widetilde{D}_{\tilde{i}_{\tilde{k}}+ \lfloor \eta n \rfloor +1, \tilde{l}_{\tilde{k}}}, D_{\tilde{j}_1, \tilde{l}_1'}, \change{...}, D_{\tilde{j}_{|B|- \tilde{k}}, \tilde{l}_{|B| - \tilde{k}}'} )\right\rbrace  \\
		& \times \left\lbrace  \prod_{B \in \pi_3}  cum(D_{\tilde{j}_1, \tilde{l}_1}, \change{...}, D_{\tilde{j}_{|B|}, \tilde{l}_{|B|}} ) \right\rbrace.
	\end{align*}
	Apply Lemma \ref{lem:exp} and \ref{lem:cov},
	\begin{align*}
		\left\lbrace 
		\begin{array}{ll}
			cum(\widetilde{D}_{\tilde{i}_1+ \lfloor \eta n \rfloor +1, \tilde{l}_1}, \change{...},\widetilde{D}_{\tilde{i}_{|B|}+ \lfloor \eta n \rfloor +1, \tilde{l}_{|B|}} ) \lesssim \rho^{ i_{max}^B - i_{min}^B }, &  \text{ if } B \in \pi_1; \\
			\sum_{\tilde{l}_1, \change{...}, \tilde{l}_{|B|}=1}^p | cum(D_{\tilde{j}_1, \tilde{l}_1}, \change{...}, D_{\tilde{j}_{|B|}, \tilde{l}_{|B|}} )| \lesssim  \| \Gamma \|_F^{|B|}, & \text{ if } B \in  \pi_3,
		\end{array} \right. 
	\end{align*} 
	where  $i_{max}^B=\max\{ \tilde{i}_1, \change{...}, \tilde{i}_{|B|}  \}$ and $ i_{min}^B=\max\{ \tilde{i}_1, \change{...}, \tilde{i}_{|B|}  \} $. If $B \in \pi_2$,
	\begin{align*} 
		cum(\widetilde{D}_{\tilde{i}_1+ \lfloor \eta n \rfloor +1, \tilde{l}_1}, \change{...},\widetilde{D}_{\tilde{i}_{\tilde{k}}+ \lfloor \eta n \rfloor +1, \tilde{l}_{\tilde{k}}}, D_{\tilde{j}_1, \tilde{l}_1'}, \change{...}, D_{\tilde{j}_{|B|- \tilde{k}}, \tilde{l}_{|B| - \tilde{k}}'} ) \lesssim \rho^{i_{max}^B + \lfloor \eta n \rfloor - j_{min}^B}.
	\end{align*}
	where  $i_{max}^B=\max\{ \tilde{i}_1, \change{...}, \tilde{i}_{\tilde{k}}   \}$ and $ j_{min}^B = \min \{ \tilde{j}_1, \change{...}, \tilde{j}_{|B| - \tilde{k}} \} $. Thus,
	\begin{multline*}
		\sum_{i_1, \change{...}, i_6 = \lfloor an \rfloor }^{ \lfloor bn \rfloor - \lfloor \eta n \rfloor -1 } \sum_{j_1 = 1}^{ \lfloor an \rfloor -1} \cdots  \sum_{j_6 = 1}^{ \lfloor an \rfloor -1}  \sum\limits_{l_1, \change{...}, l_6 =1}^p \sum_{\pi} \prod_{B \in \pi} |cum(Z_{i,l} : (i,l) \in B )| \lesssim \\   \sum_{\pi}   \left\{  \prod_{B \in \pi_1} \left(   \sum_{ \tilde{i}_1, \change{...}, \tilde{i}_{|B|}= \lfloor an \rfloor }^{ \lfloor bn \rfloor  - \lfloor \eta n \rfloor -1} \rho^{ i_{max}^B - i_{min}^B }\right)\right\}  \left\{  \prod_{B \in \pi_3} \left( \sum_{\tilde{j}_1, \change{...}, \tilde{j}_{|B|} =1 }^{ \lfloor an \rfloor-1} p^{6 -  |B|}   \| \Gamma \|_F^{|B|} \right) \right\}  \\ \times \left\{  \prod_{B \in \pi_2} \left(   \sum_{\tilde{i}_1, \change{...}, \tilde{i}_{\tilde{k}} = \lfloor an \rfloor}^{ \lfloor bn \rfloor  - \lfloor \eta n \rfloor -1} \sum_{\tilde{j}_1, \change{...}, \tilde{j}_{|B| - \tilde{k}} =1 }^{ \lfloor an \rfloor-1} \rho^{ i_{max}^B - j_{min}^B } \rho^{ \lfloor \eta n \rfloor}\right) \right\}.
	\end{multline*}
	To continue the proof, notice that there exists a constant $N_{\rho}$ such that $m^6 \rho^{m/2} < 1$ if $m > N_{\rho}$. Thus, for any $B \in \pi_1$, we have
	\begin{align}\label{eq:sum1}
		\begin{split}
			& \sum_{\tilde{i}_1, \change{...}, \tilde{i}_{|B|} = \lfloor an \rfloor}^{\lfloor bn \rfloor - \lfloor \eta n \rfloor -1} \rho^{ i_{max}^B - i_{min}^B } \\
			\leq & \sum_{ \lfloor an \rfloor \le i _1 < i_2 \leq \lfloor bn \rfloor - \lfloor \eta n \rfloor -1} (i_2-i_1)^{ |B|-2} \rho^{i_2 - i_1} \\
			\lesssim & N_{\rho}^{|B|-2}\sum^{| i_1 - i_2 | \leq N_{\rho}}_{\lfloor an \rfloor \le i _1 < i_2 \leq \lfloor bn \rfloor - \lfloor \eta n \rfloor -1} \rho^{i_2 - i_1} + \sum^{| i_1 - i_2 | > N_{\rho}}_{\lfloor an \rfloor \le i _1 < i_2 \leq \lfloor bn \rfloor - \lfloor \eta n \rfloor -1} (\sqrt{\rho})^{i_2 - i_1} \\
			= & O(n).
		\end{split}
	\end{align}
	Also, it can be easily seen that
	\begin{align*}
		\prod_{B \in \pi_3} \left( \sum_{\tilde{j}_1, \change{...}, \tilde{j}_{|B|}=1 }^{ \lfloor an \rfloor-1} p^{6 -  |B|}   \| \Gamma \|_F^{|B|} \right) \lesssim n^{\sum_{B \in \pi_3}|B|} p^{6 - \sum_{B \in \pi_3} |B|}   \| \Gamma \|_F^{\sum_{B \in \pi_3}|B|}.
	\end{align*}
	In addition, for any $B \in \pi_2$
	\begin{align*}
		& \sum_{\tilde{i}_1, \change{...}, \tilde{i}_{\tilde{k}} = \lfloor an \rfloor}^{  \lfloor bn \rfloor - \lfloor \eta n \rfloor -1}\rho^{ i_{max}^B - \lfloor an \rfloor } \\
		\lesssim & \sum\limits_{i_1 = \lfloor an \rfloor}^{\lfloor bn \rfloor - \lfloor \eta n \rfloor -1} (i_{1} - \lfloor an \rfloor)^{ |B \cap \mathbb{I}_1|-1 } \rho^{i_{1} - \lfloor an \rfloor }  \\
		\lesssim & \sum\limits_{i_{1} - \lfloor an \rfloor > N_{\rho}} (\sqrt{\rho})^{ i_{1} - \lfloor an \rfloor } + O(1) \\
		= & O(1).
	\end{align*}
	It can shown similarly that $ \sum_{\tilde{j}_1, \change{...}, \tilde{j}_{|B|-\tilde{k}} =1}^{ \lfloor an \rfloor-1} \rho^{\lfloor an \rfloor - j_{min}^B} = O(1) $. Thus,
	\begin{multline*}
		\sum_{\tilde{i}_1, \change{...}, \tilde{i}_{\tilde{k}} = \lfloor an \rfloor}^{  \lfloor bn \rfloor - \lfloor \eta n \rfloor -1 } \sum_{\tilde{j}_1, \change{...}, \tilde{j}_{|B|-\tilde{k}} =1}^{ \lfloor an \rfloor-1} \rho^{ i_{max}^B - j_{min}^B } = \\
		\left\{   \sum_{\tilde{i}_1, \change{...}, \tilde{i}_{\tilde{k}} = \lfloor an \rfloor}^{  \lfloor bn \rfloor - \lfloor \eta n \rfloor -1 } \rho^{ i_{max}^B - \lfloor an \rfloor }\right\} \left\{  \sum_{\tilde{j}_1, \change{...}, \tilde{j}_{|B|-\tilde{k}} =1}^{ \lfloor an \rfloor-1} \rho^{\lfloor an \rfloor - j_{min}^B} \right\} = O(1).
	\end{multline*}
	In conclusion, we have
	\begin{multline*}
		E \left[ \left( \frac{\sqrt{2}}{n^2 \|\Gamma \|_F}  \sum_{i = \lfloor an \rfloor }^{ \lfloor bn \rfloor - \lfloor \eta n \rfloor -1 } \sum_{j = 1}^{ \lfloor an \rfloor -1} v_j \widetilde{D}_{i+ \lfloor \eta n \rfloor +1}^T D_{j} \right)^6 \right] \\
		\lesssim \left\lbrace  \begin{array}{ll}
			\frac{1}{n^{12} \|\Gamma \|_F^6} n^9 \|\Gamma \|_F^6  = \frac{1}{n^3}, & \text{if } \pi_2 = \emptyset; \\
			\frac{1}{n^{12} \|\Gamma \|_F^6} n^3 n^{\sum_{B \in \pi_3}|B|} \rho^{\lfloor \eta n \rfloor} p^{6 - \sum_{B \in \pi_3} |B|} \| \Gamma \|_F^{\sum_{B \in \pi_3}|B|} \lesssim \frac{1}{n^3}, &\text{if } \pi_2 \neq \emptyset .
		\end{array} \right. 
	\end{multline*}
	
	\subsubsection{Proof of Lemma \ref{lem:neg}}
	\begin{proof} For notational convenience, denote
		\begin{align*}
			\hat{R}(a,b|\eta) = \sum_{i = \lfloor an \rfloor }^{ \lfloor bn \rfloor - \lfloor \eta n \rfloor -1 } v_i  D_{i}^T  \widetilde{D}_{ \lfloor b n \rfloor }.
		\end{align*}
		Let $\pi$ be any disjoint partition over the set $\mathbb{I}$ such that $|B|>1$ for any $B \in \pi$, where $\mathbb{I}$ is defined as
		\begin{multline*}
			\mathbb{I}:=\big\{ (i_1,l_1),(i_2,l_2),(i_3,l_3),(i_4,l_4), (i_5, l_5), (i_6, l_6),\\ ( \lfloor bn \rfloor ,l_1),(\lfloor bn \rfloor,l_2),(\lfloor bn \rfloor,l_3),(\lfloor bn \rfloor,l_4), (\lfloor bn \rfloor, l_5), (\lfloor bn \rfloor, l_6)  \big\}.
		\end{multline*}
		Any such $\pi$ is a disjoint union of 3 sets as $\pi = \pi_1 \cup \pi_2 \cup \pi_3$, where $\pi_1 := \{A| A \in \pi,  A \subseteq \mathbb{I}_1 \}, \pi_2: = \{A| A \in \pi, A \nsubseteq \mathbb{I}_1, A \nsubseteq \mathbb{I}_2 \}, \pi_3 := \{A | A \in \pi,  A \subseteq \mathbb{I}_2\}$ and $\mathbb{I}_1, \mathbb{I}_2$ are defined as
		\begin{align*}
			\mathbb{I}_1 & := \big\{ (i_1,l_1),(i_2,l_2),(i_3,l_3),(i_4,l_4), (i_5, l_5), (i_6, l_6) \big\}, \\
			\mathbb{I}_2 & := \big\{ ( \lfloor bn \rfloor ,l_1),(\lfloor bn \rfloor,l_2),(\lfloor bn \rfloor,l_3),(\lfloor bn \rfloor,l_4), (\lfloor bn \rfloor, l_5), (\lfloor bn \rfloor, l_6)  \big\}. 
		\end{align*}
		For notational convenience, denote
		\begin{align*}
			Z_{i,l} = 
			\left\lbrace 
			\begin{array}{ll}
				D_{i,l} & (i,l) \in \mathbb{I}_1; \\
				\widetilde{D}_{i,l} & (i,l) \in \mathbb{I}_2.
			\end{array}
			\right. 
		\end{align*}
		Then, we have
		\begin{align*}
			E\left[ \hat{R}(a, b|\eta)^6 \right] 
			\leq  &\sum_{i_1,\change{...}, i_6 = \lfloor an\rfloor}^{ \lfloor bn\rfloor - \lfloor \eta n \rfloor  -1}   \sum_{l_1,\dots, l_6=1}^p \left| E \left[  D_{i_1, l_1} \widetilde{D}_{\lfloor bn \rfloor , l_1} \cdots  D_{i_6, l_6} \widetilde{D}_{\lfloor bn \rfloor , l_6} \right] \right| \\
			\leq & \sum_{i_1,\change{...}, i_6}  \sum_{l_1,\change{...}, l_6} \sum_{\pi} \prod_{B \in \pi} |cum(Z_{i,l} : (i,l) \in B )|.
		\end{align*}
		Similarly, we write
		\begin{align*}
			\begin{array}{ll}
				B = \{ (\tilde{i}_1 ,\tilde{l}_1), \change{...}, (\tilde{i}_{|B|} ,\tilde{l}_{|B|}) \}, & \text{if } B \in \pi_1; \\
				B = \{ ( \lfloor bn \rfloor, \tilde{l}_1), \change{...}, ( \lfloor bn \rfloor, \tilde{l}_{|B|}) \}, & \text{if } B \in \pi_3,
			\end{array}
		\end{align*}
		and if $B \in \pi_2$, we can represent the set $B$  as
		\begin{align*}
			B = \big\{ (\tilde{i}_1 ,\tilde{l}_1), \change{...}, (\tilde{i}_{\tilde{k}} ,\tilde{l}_{\tilde{k}}),  (\lfloor bn \rfloor, \tilde{l}_1'), \change{...}, (\lfloor bn \rfloor, \tilde{l}_{|B|-\tilde{k}}') \big\},
		\end{align*}
		where $0 < \tilde{k} < |B|$. Then, we can decompose the product of cumulants as
		\begin{align*}
			&\prod_{B \in \pi} cum(Z_{i,l} : (i,l) \in B ) \\
			= & \left\lbrace  \prod_{B \in \pi_1}  cum(D_{\tilde{i}_1, \tilde{l}_1}, \change{...},D_{\tilde{i}_{|B|}, \tilde{l}_{|B|}} ) \right\rbrace \\
			&  \times  \left\lbrace  \prod_{B \in \pi_2}  cum(D_{\tilde{i}_1, \tilde{l}_1}, \change{...}, D_{\tilde{i}_{ \tilde{k}}, \tilde{l}_{\tilde{k}}}, \widetilde{D}_{\lfloor b n \rfloor , \tilde{l}_1'}, \change{...},\widetilde{D}_{ \lfloor b n \rfloor , \tilde{l}_{|B|-\tilde{k}}' } )\right\rbrace  \\
			& \times \left\lbrace  \prod_{B \in \pi_3}  cum(\widetilde{D}_{\lfloor b n \rfloor , \tilde{l}_1}, \change{...},\widetilde{D}_{ \lfloor b n \rfloor , \tilde{l}_{|B|} } ) \right\rbrace.
		\end{align*}
		The following bounds on the cumulants are from Lemma \ref{lem:exp} and \ref{lem:cov} 
		\begin{align*}
			\left\lbrace 
			\begin{array}{ll}
				cum(D_{\tilde{i}_1, \tilde{l}_1}, \change{...},D_{\tilde{i}_{|B|}, \tilde{l}_{|B|}} ) \lesssim \rho^{ i_{max}^B - i_{min}^B }, &  \text{ if } B \in \pi_1; \\
				\sum_{\tilde{l}_1, \change{...}, \tilde{l}_{|B|}=1}^p |cum(\widetilde{D}_{\lfloor b n \rfloor , \tilde{l}_1}, \change{...},\widetilde{D}_{ \lfloor b n \rfloor , \tilde{l}_{|B|} } )| \lesssim  \| \Gamma \|_F^{|B|}, & \text{ if } B \in  \pi_3,
			\end{array} \right. 
		\end{align*}
		where  $i_{max}^B=\max\{ \tilde{i}_1, \change{...}, \tilde{i}_{|B|}  \}$ and $ i_{min}^B=\max\{ \tilde{i}_1, \change{...}, \tilde{i}_{|B|}  \} $. If $B \in \pi_2$,
		\begin{align*}
			cum(D_{\tilde{i}_1, \tilde{l}_1}, \change{...}, D_{\tilde{i}_{ \tilde{k}}, \tilde{l}_{\tilde{k}}}, \widetilde{D}_{\lfloor b n \rfloor , \tilde{l}_1'}, \change{...},\widetilde{D}_{ \lfloor b n \rfloor , \tilde{l}_{|B|-\tilde{k}}' } ) \lesssim \rho^{ \lfloor bn \rfloor - \lfloor \eta n \rfloor - i_{min}^B } \rho^{ \lfloor \eta n \rfloor},
		\end{align*}
		where $i_{min}^B = \min \{ \tilde{i}_1, \change{...}, \tilde{i}_{\tilde{k}} \}$. Thus,  we have
		\begin{multline*}
			E\left[ \hat{R}(a, b|\eta)^6 \right] \lesssim  \sum_{\pi}     \prod_{B \in \pi_1} \left(   \sum_{ \tilde{i}_1, \change{...}, \tilde{i}_{|B|} = \lfloor an \rfloor}^{\lfloor bn \rfloor - \lfloor \eta n \rfloor -1} \rho^{ i_{max}^B - i_{min}^B }\right) \times \\  \prod_{B \in \pi_2} \left(   \sum_{\tilde{i}_1, \change{...}, \tilde{i}_{\tilde{k}} = \lfloor an \rfloor}^{\lfloor bn \rfloor - \lfloor \eta n \rfloor -1} \rho^{ \lfloor bn \rfloor - \lfloor \eta n \rfloor - i_{min}^B } \rho^{ \lfloor \eta n \rfloor}\right) 
			p^{6 - \sum_{B\in \pi_3} |B|}   \| \Gamma \|_F^{\sum_{B\in \pi_3}|B|}  .
		\end{multline*}
		We know from Equation \eqref{eq:sum1} that for $B \in \pi_1$,
		\begin{align*} 
			\begin{split}
				\sum_{\tilde{i}_1, \change{...}, \tilde{i}_{|B|} = \lfloor an \rfloor}^{\lfloor bn \rfloor - \lfloor \eta n \rfloor -1} \rho^{ i_{max}^B - i_{min}^B } =  O(n).
			\end{split}
		\end{align*}
		For any $B \in \pi_2$, 
		\begin{align*}
			\begin{split}
				& \sum_{\tilde{i}_1, \change{...}, \tilde{i}_{\tilde{k}} = \lfloor an \rfloor}^{\lfloor bn \rfloor - \lfloor \eta n \rfloor -1} \rho^{ \lfloor bn \rfloor - \lfloor \eta n \rfloor - i_{min}^B } \\
				\lesssim & \sum\limits_{i_1 = \lfloor an \rfloor}^{\lfloor bn \rfloor - \lfloor \eta n \rfloor -1} (\lfloor bn \rfloor - \lfloor \eta n \rfloor - i_{1})^{ \tilde{k}-1 } \rho^{ \lfloor bn \rfloor - \lfloor \eta n \rfloor - i_{1} }  \\
				\lesssim & \sum\limits_{\lfloor bn \rfloor - \lfloor \eta n \rfloor - i_{1} > N_{\rho}} (\sqrt{\rho})^{ \lfloor bn \rfloor - \lfloor \eta n \rfloor - i_{1} } + O(1) \\
				= & O(1).
			\end{split}
		\end{align*}
		As a consequence, for any $\pi = \pi_1 \cup \pi_2 \cup \pi_3$, we have
		\begin{align*}
			\prod_{B \in \pi_1} \left(   \sum_{\tilde{i}_1, \change{...}, \tilde{i}_{|B|} = \lfloor an \rfloor}^{\lfloor bn \rfloor - \lfloor \eta n \rfloor -1} \rho^{ i_{max}^B - i_{min}^B }\right) = O(n^3),
		\end{align*}
		and
		\begin{align*}
			\prod_{B \in \pi_2 } \left(   \sum_{\tilde{i}_1, \change{...}, \tilde{i}_{\tilde{k}} = \lfloor an \rfloor}^{\lfloor bn \rfloor - \lfloor \eta n \rfloor -1} \rho^{ \lfloor bn \rfloor - \lfloor \eta n \rfloor - i_{min}^B } \right) =O(1).
		\end{align*}
		In conclusion, we have
		\begin{align*}
			\frac{1}{n^6 \|\Gamma \|_F^6}E\left[ \hat{R}(a, b|\eta)^6 \right]  \lesssim \left\lbrace   \begin{array}{ll}
				\frac{1}{n^6 \|\Gamma \|_F^6} n^3 \| \Gamma \|_F^6 = \frac{1}{n^3}, & \text{if } \pi_2 = \emptyset; \\
				\frac{1}{n^6 \|\Gamma \|_F^6} n^3 \rho^{\lfloor \eta n \rfloor} p^{6 - \sum_{B\in \pi_3} |B|}   \| \Gamma \|_F^{\sum_{B\in \pi_3}|B|} \lesssim \frac{1}{n^3}, & \text{if } \pi_2 \neq \emptyset.
			\end{array}\right. 
		\end{align*}
	\end{proof}
	
	\subsubsection{Proof of Lemma \ref{lem:alt}}
	
	Observe that under \ref{A41A3} we have $\delta_n^T \Gamma \delta_{n} = o(\|\delta_n\|^2 \|\Gamma\|_F)$, see the proof of Remark~\ref{rem:condA} for a detailed derivation of this type of bound.
	
	We only give details for the second inequality as others can be shown similarly. Note that for any $\delta_n \in \mathbb{R}^p$
	\begin{align*}
		\sup_{1 \leq l<k \leq n} \left|  \sum_{i=l}^{k} \frac{ i+\lfloor \eta n \rfloor +1 }{n} X_{ i+\lfloor \eta n \rfloor +1 }^T \delta_n  \right| \leq 2 \sup_{1 \leq k \leq n} \left| \sum_{i=1}^{k} \frac{ i+\lfloor \eta n \rfloor +1 }{n} X_{ i+\lfloor \eta n \rfloor +1 }^T \delta_n  \right|.
	\end{align*}
	Then, apply the BN decomposition to obtain
	\begin{multline*}
		\sup_{1 \leq k \leq n} \left| \sum_{i=1}^{k} \frac{ i+\lfloor \eta n \rfloor +1 }{n} X_{i+\lfloor \eta n \rfloor +1}^T \delta_n  \right| 
		\\
		\leq
		\sup_{1 \leq k \leq n} \left| \sum_{i=1}^{k} \frac{ i+\lfloor \eta n \rfloor +1 }{n} D_{i+\lfloor \eta n \rfloor +1}^T \delta_n  \right| +  \sup_{1 \leq k \leq n} \left|  \sum_{i=1}^{k} \frac{ i+\lfloor \eta n \rfloor +1 }{n} \varepsilon_{i+\lfloor \eta n \rfloor +1}^T \delta_n \right|.
	\end{multline*}
	For the first term on the right hand side, Kolmogorov's inequality implies that 
	\begin{align*}
		&\sup_{1 \leq k \leq n} \left| \sum_{i=1}^{k} \frac{ i+\lfloor \eta n \rfloor +1 }{n} D_{i+\lfloor \eta n \rfloor +1}^T \delta_n  \right| 
		\\
		= &  O_p\left( \left\{  \text{var} \left(\sum_{i=1}^{n} \frac{ i+\lfloor \eta n \rfloor +1 }{n} D_{i+\lfloor \eta n \rfloor +1}^T \delta_n \right) \right\}^{1/2} \right) 
		\\
		= & O_p \left( n^{1/2} ( \delta_n^T \Gamma \delta_{n}  )^{1/2} \right) = o_P(n^{1/2}\|\delta_n\|\|\Gamma\|_F^{1/2}).
	\end{align*}
	Next, we bound $\sup_{1 \leq k \leq n} |  \sum_{i=1}^{k} (i+\lfloor \eta n \rfloor +1)/n \varepsilon_{i+\lfloor \eta n \rfloor +1}^T \delta_n |$. Some algebra show that
	\begin{multline*}
		\sum_{i=1}^{k} \frac{i+\lfloor \eta n \rfloor +1}{n} \varepsilon_{i+\lfloor \eta n \rfloor +1}^T \delta_n =  \frac{k + \lfloor \eta n \rfloor +1}{n} \widetilde{D}_{k+ \lfloor \eta n \rfloor +1}^T\delta_n 
		\\ 
		- \frac{1 }{n} \left( \widetilde{D}_{\lfloor \eta n \rfloor +2} +  \cdots + \widetilde{D}_{k+\lfloor \eta n \rfloor} \right)^T\delta_n - \frac{\lfloor \eta n \rfloor +2}{n}\widetilde{D}_{\lfloor \eta n \rfloor +1}^T\delta_n .
	\end{multline*}
	According to Proposition 1 in \cite{wu2007strong}, for any $n = 2^d$,
	\begin{align*}
		\left[ E \left( \sup_{1 \leq k \leq n} \left| \sum_{i=1}^{k} \widetilde{D}_{i+\lfloor \eta n \rfloor+1}^T \delta_n  \right| \right)^2\right]^{1/2} \leq \sum_{h=1}^d \left[ \sum_{u=1}^{2^{d-h}} E \left( \sum_{i=2^h(u-1)+1}^{2^h u} \widetilde{D}_{i+\lfloor \eta n \rfloor+1}^T \delta_n \right)^2 \right]^{1/2}.
	\end{align*}
	For each term in the square bracket
	\begin{align*}
		& E \left( \sum_{i=2^h(u-1)+1}^{2^h u}  \widetilde{D}_{i+\lfloor \eta n \rfloor+1}^T \delta_n \right)^2 
		\\  
		= & \left|  \sum_{2^h(u-1)+1 \leq i_1,  i_2 \leq 2^h u} \sum_{j_1, j_2=1}^p  \delta_{n,j_1}\delta_{n,j_2} E[ \widetilde{D}_{i_1+\lfloor \eta n \rfloor+1, j_1} \widetilde{D}_{i_2+\lfloor \eta n \rfloor+1, j_2} ] \right| \\
		\lesssim & \sum_{2^h(u-1)+1 \leq i_1,  i_2 \leq 2^h u} \left\{ \sum_{j_1, j_2=1}^p 
		\delta_{n,j_1}^2 \delta_{n, j_2}^2  \right\}^{1/2} \left\{ \sum_{j_1, j_2=1}^p E^2[ \widetilde{D}_{i_1+\lfloor \eta n \rfloor+1, j_1} \widetilde{D}_{i_2+\lfloor \eta n \rfloor+1, j_2} ] \right\}^{1/2} 
		\\
		\lesssim & 2^{2h} \| \delta_n \|_2^2 \|\Gamma \|_F.
	\end{align*}
	So, we have
	\begin{align*}
		\left[ E \left( \sup_{1 \leq k \leq n} \left| \sum_{i=1}^{k} \widetilde{D}_{i+\lfloor \eta n \rfloor+1}^T \delta_n  \right| \right)^2\right]^{1/2} \lesssim \| \delta_n \|_2 \|\Gamma \|_F^{1/2} \sum_{h=1}^d 2^{(d+h)/2} \lesssim \| \delta_n \|_2 \|\Gamma \|_F^{1/2} 2^d.
	\end{align*}
	For general $n$, there exists $d$ such that $2^{d-1} \leq n < 2^d$ and 
	\begin{multline*}
		\left[ E \left( \sup_{1 \leq k \leq n} \left| \sum_{i=1}^{k} \widetilde{D}_{i+\lfloor \eta n \rfloor+1}^T \delta_n  \right| \right)^2\right]^{1/2} \leq \left[ E \left( \sup_{1 \leq k \leq 2^d} \left| \sum_{i=1}^{k} \widetilde{D}_{i+\lfloor \eta n \rfloor+1}^T \delta_n  \right| \right)^2\right]^{1/2} 
		\\ 
		\lesssim \| \delta_n \|_2 \|\Gamma \|_F^{1/2} 2^d  \lesssim \| \delta_n \|_2 \|\Gamma \|_F^{1/2} n.
	\end{multline*}
	The above inequality implies
	\begin{align*}
		\frac{1}{\|\Gamma \|_F} \sup_{1 \leq k \leq n} \left|\frac{1}{n} \sum_{i=1}^{k} \widetilde{D}_{i+\lfloor \eta n \rfloor+1}^T \delta_n  \right| = O_p\left( \frac{ \| \delta_n \|_2 }{\|\Gamma \|_F^{1/2}} \right) = o_p\left( \frac{\sqrt{n}\| \delta_n \|_2 }{ \|\Gamma \|_F^{1/2} } \right).
	\end{align*}
	Next, the maximal inequality implies that 
	\begin{align*}
		E\left[  \max_k \left| \frac{k + \lfloor \eta n \rfloor +1}{n} \widetilde{D}_{k+ \lfloor \eta n \rfloor +1}^T\delta_n \right|\right] \lesssim n^{1/4} \max_k \left[E\left(\widetilde{D}_{k+ \lfloor \eta n \rfloor +1}^T\delta_n. \right)^4\right]^{1/4}.
	\end{align*}
	Then, by Cauchy's inequality
	\begin{align*}
		& E \left[ ( \widetilde{D}_{k+ \lfloor \eta n \rfloor +1}^T\delta_n )^4 \right] 
		\\
		\leq & \sum_{j_1, j_2, j_3, j_4=1}^p  |\delta_{n,j_1}\delta_{n,j_2}\delta_{n,j_3}\delta_{n,j_4}   E[ \widetilde{D}_{k+ \lfloor \eta n \rfloor +1, j_1}\widetilde{D}_{k+ \lfloor \eta n \rfloor +1, j_2} \widetilde{D}_{k+ \lfloor \eta n \rfloor +1, j_3}  \widetilde{D}_{k+ \lfloor \eta n \rfloor +1, j_4}  ] | 
		\\
		\lesssim & \left\{ \sum_{j_1, j_2, j_3, j_4=1}^p \delta_{n,j_1}^2 \delta_{n,j_2}^2 \delta_{n,j_3}^2 \delta_{n,j_4}^2 \right\}^{1/2} \times 
		\\
		& \hspace{1cm} \left\{ \sum_{j_1, j_2, j_3, j_4=1}^p  E^2[ \widetilde{D}_{k+ \lfloor \eta n \rfloor +1, j_1}\widetilde{D}_{k+ \lfloor \eta n \rfloor +1, j_2} \widetilde{D}_{k+ \lfloor \eta n \rfloor +1, j_3}  \widetilde{D}_{k+ \lfloor \eta n \rfloor +1, j_4}  ] \right\}^{1/2}.
	\end{align*}
	Applying Lemma \ref{lem:cov},
	\begin{align*}
		& \sum_{j_1, j_2, j_3, j_4=1}^p  E^2[ \widetilde{D}_{k+ \lfloor \eta n \rfloor +1, j_1}\widetilde{D}_{k+ \lfloor \eta n \rfloor +1, j_2} \widetilde{D}_{k+ \lfloor \eta n \rfloor +1, j_3}  \widetilde{D}_{k+ \lfloor \eta n \rfloor +1, j_4}  ] 
		\\
		\lesssim & \sum_{j_1, j_2, j_3, j_4=1}^p cum^2( \widetilde{D}_{k+ \lfloor \eta n \rfloor +1, j_1},\widetilde{D}_{k+ \lfloor \eta n \rfloor +1, j_2}, \widetilde{D}_{k+ \lfloor \eta n \rfloor +1, j_3},  \widetilde{D}_{k+ \lfloor \eta n \rfloor +1, j_4} ) 
		\\
		+ & \sum_{j_1, j_2, j_3, j_4=1}^p cum^2( \widetilde{D}_{k+ \lfloor \eta n \rfloor +1, j_1},\widetilde{D}_{k+ \lfloor \eta n \rfloor +1, j_2})cum^2(\widetilde{D}_{k+ \lfloor \eta n \rfloor +1, j_3},  \widetilde{D}_{k+ \lfloor \eta n \rfloor +1, j_4} )
		\\
		+ & \sum_{j_1, j_2, j_3, j_4=1}^p cum^2( \widetilde{D}_{k+ \lfloor \eta n \rfloor +1, j_1},\widetilde{D}_{k+ \lfloor \eta n \rfloor +1, j_3})cum^2( \widetilde{D}_{k+ \lfloor \eta n \rfloor +1, j_2},  \widetilde{D}_{k+ \lfloor \eta n \rfloor +1, j_4} ) 
		\\
		+ &  \sum_{j_1, j_2, j_3, j_4=1}^p cum^2( \widetilde{D}_{k+ \lfloor \eta n \rfloor +1, j_1}, \widetilde{D}_{k+ \lfloor \eta n \rfloor +1, j_4})cum^2(\widetilde{D}_{k+ \lfloor \eta n \rfloor +1, j_3},  \widetilde{D}_{k+ \lfloor \eta n \rfloor +1, j_2} ) 
		\\
		\lesssim & \| \Gamma \|_F^4,
	\end{align*}
	which entails that $ E[( \widetilde{D}_{k+ \lfloor \eta n \rfloor +1}^T\delta_n )^4] \lesssim \| \delta_n \|_2^4 \| \Gamma \|_F^2 $ and 
	\begin{align*}
		\max_k \left| \frac{1}{\| \Gamma \|_F} \frac{k + \lfloor \eta n \rfloor +1}{n} \widetilde{D}_{k+ \lfloor \eta n \rfloor +1}^T\delta_n \right| = O_p \left( \frac{ n^{1/4} \| \delta_n \|_2}{ \| \Gamma \|_F^{1/2} }   \right) = o_p\left( \frac{\sqrt{n}\| \delta_n \|_2 }{ \|\Gamma \|_F^{1/2} } \right).
	\end{align*} 
	\hfill $\Box$

	\section{Testing for Covariance Matrix Change}
	\label{sec:covsim}

	In this section, we examine the finite sample performance of our test applied to test for a change in the covariance matrix, in comparison with a recent method developed by \cite{avanesov2016change}. In the latter paper, they proposed a high dimensional covariance change point detection scheme that involves the choices of several tuning parameters. For the purpose of completeness, we present their method below in detail.
	
	They first consider a set of window sizes $\mathcal{N} \in \mathbb{N}$. For each window size $n \in \mathcal{N}$, define a set of central points $\mathbb{T}_n := \{n+1,\change{...},N-n+1\}$, where $N$ is the sample size. For $n \in \mathcal{N}$ define a set of indices belong to the window on the left side from the central point $t \in \mathbb{T}_n$ as $\mathcal{I}_n^l(t) := \{t-n,\change{...},t-1\}$ and indices in the right side $\mathcal{I}_n^r(t) := \{t+1,\change{...},t+n\}$. Denote the sum of number of central points for all window sizes $n \in \mathcal{N}$ as $T := \sum_{n \in \mathcal{N}}|\mathbb{T}_n|$.
	
	For each window size $n$, each center point $t$ and either left side or right side $\mathfrak{G} \in \{l,r\}$, they define a de-sparsified estimator of precision matrix as
	$$\hat{T}_n^{\mathfrak{G}}(t) := \hat{\Theta}_n^{\mathfrak{G}}(t) + \hat{\Theta}_n^{\mathfrak{G}}(t)^T - \hat{\Theta}_n^{\mathfrak{G}}(t)^T\hat{\Sigma}_n^{\mathfrak{G}}(t)\hat{\Theta}_n^{\mathfrak{G}}(t),$$
	where
	$$\hat{\Sigma}_n^{\mathfrak{G}}(t) = \frac{1}{n}\sum_{i \in \mathcal{I}_n^{\mathfrak{G}(t)}}X_iX_i^T,$$
	and $\hat{\Theta}_n^{\mathfrak{G}}$ is the precision matrix estimated by Graphical Lasso. Define a $p \times p$ matrix with elements
	$$Z_{i,uv}  := \Theta_u^*X_i\Theta_v^*X_i - \Theta_{uv},$$
	where $\Theta^* := \mathbb{E}[X_iX_i^T]^{-1}$ for all data before the change point location, $\Theta_u^*$ is the $u-th$ row and denote the variance as $\sigma_{u,v} := Var(Z_{1,uv})$ and the diagonal matrix ${S} = diag(\sigma_{1,1},\sigma_{1,2},\change{...},\sigma_{p,p-1},\sigma_{p,p})$. Finally their test statistic is
	$$A = \max_{n\in \mathcal{N},t\in \mathbb{T}_n} \left\|\sqrt{\frac{n}{2}}S^{-1}\overline{(\hat{T}_n^l(t) - \hat{T}_n^r(t))}\right\|_{\infty},$$
	where $\bar{M}$ means the vector composed of stacked columns of the matrix $M$. Their test rejects the null hypothesis when the above statistic is greater than some critical value, which is determined via a bootstrap procedure.  More details can be found in \cite{avanesov2016change}.

	Here we let  $X_t's$ be $p$-dimensional multivariate normal random vectors with mean ${0}$ and variance $\boldsymbol{\Sigma}_t$. We fix  $p = 10$ and the sample size as $n = 100$ or $200$. Under the null, we set the common covariance matrix as (1) $0.8I_p$ or (2) $AR(0.4)$. Under the alternative, we let $\boldsymbol{\Sigma}_1 = \cdots = \boldsymbol{\Sigma}_{n/2} \neq \boldsymbol{\Sigma}_{n/2+1} = \cdots = \boldsymbol{\Sigma}_n$, where $\Sigma_{n/2}$ is (1) $0.8I_p$ (2) $AR(0.8)$ and $\Sigma_{n/2+1}$ is (1) $0.4I_p$ (2) $AR(0.4)$.

	The results are summarized in Table \ref{simulation:cov}, where our method is denoted as ``SN" and the other method as ``AB". There is no tuning parameter in our method, however a few tuning parameters need to be specified for the method ``AB". In particular,  the window size was chosen as 30, and the stable set which was used to estimate the precision matrix was chosen as $\{1,2,\change{...},40\}$. As we can see from Table \ref{simulation:cov}, there is a huge size distortion with the ``AB" test, which could be due  to the way  the tuning parameter is selected.   By contrast, our SN method has fairly accurate size.   In terms of the power, SN method is powerful under the alternative. ``AB" test has a perfect rejection rate under the alternative, but this shall not be taken too seriously given the huge over-rejection under the null. Overall the SN method seems quite favorable given its accurate size and reasonable power as well as the  tuning-free implementation. It is worth mentioning that there is really no guidance or data-driven formula provided as to the choice of tuning parameters in \cite{avanesov2016change}. We tried several choices but all of them delivered large size distortion, which indicates the choice of tuning parameters is indeed a difficult issue for their test.
	

	\begin{table}[h!]
		\centering
		
		\begin{tabular}{|c|c|cc|cc|}
			
			\hline
			&&\multicolumn{2}{c|}{Diagonal}&\multicolumn{2}{c|}{AR}\\
			\cline{2-6}
			&&$\mathcal{H}_0$&$\mathcal{H}_1$&$\mathcal{H}_0$&$\mathcal{H}_1$\\
			\hline
			\multirow{2}{*}{$N = 100, p = 10$}&$SN$&5.0&91.4&6.5&90.0\\
			\cline{2-6}
			&$AB$&66.6&100&73.8&100\\
			\hline
			\multirow{2}{*}{$N = 200, p = 10$}&$SN$&4.3&100&4&100\\
			\cline{2-6}
			&$AB$&46&100&83&100\\
			\hline
			
		\end{tabular}
		\caption{Empirical Rejection Rates for Tests of Covariance Matrix Change}	\label{simulation:cov}
	\end{table}

	\newpage

	\section{Simulation results for change-point estimation}
	\label{sec:WBSsim}
	
	In this section, we present the WBS-based estimation results and compare with a few other alternative methods via simulations, for independent data in Section~\ref{sub:estimate} and time series in Section~\ref{sub:estimateTS}.
	
	\subsection{Change-point estimation: independent data}
	\label{sub:estimate}
	
	As described in Section~\ref{sec:estimation}, we can combine the WBS idea with the self-normalized statistics to estimate  the number and locations for change points in the mean of high-dimensional independent data. In this subsection, we compare our WBS method (denoted as WBS-SN, with $L_0=10$) with  binary segmentation(BS-SN) and INSPECT, the latter of which was developed by \cite{wang2018high} targeting sparse and strong changes.
	
	Following \cite{wang2018high}, we consider a three change-points model and the change points are located at $[n/4]$, $2[n/4]$, and $3[n/4]$. The mean vectors for those four different zones are $\mu_1,\change{...},\mu_4$. Thus we draw $\floor{n/4}$ i.i.d sample from $N(\mu_i,\sigma^2I_p)$ for each zone.  We define $\theta_1,\theta_2,\theta_3$ as three signals at change points, i.e. $\theta_i = \mu_{i+1} - \mu_i$ and $\nu_i = \|\theta_i\|_2$ as the signal strength for $i = 1, 2, 3$. Denote $s = \|\theta_i\|_0$ for all $i$ as the sparsity level.
	Specifically we let $n = 120, p = 50$ and set $\sigma = 1$. The total number of random segments used in WBS-SN is fixed as $M = 1000$. As we described before, we choose the threshold for WBS based on the reference sample. For INSPECT, we use all default parameters  in the  "InspectChangepoint" package in R. We consider two cases for the alternative, one is sparse where $s = 5$ and the other one is dense where $s = p = 50$. We denote the true number of change points as $N = 3$, and the estimated number is $\hat{N}$. The true location of change points are 30, 60 and 90.
	
	For sparse case, we set $\theta_1 = 2(k,k,k,k,k,0,\change{...},0)^T$, $\theta_2 = -2(k,k,k,k,k,0,\change{...},0)^T$, and	$\theta_3 = 2(k,k,k,k,k,0,\change{...},0)^T$ where $k \in \{ \sqrt{2.5/5},\sqrt{4/5}\}$. For the dense alternative, we set $\theta_1 = 2k\times\boldsymbol{1}_p$, $\theta_2 = -2k\times\boldsymbol{1}_p$ and $\theta_3 = 2k\times\boldsymbol{1}_p$, and let $k \in \{ \sqrt{2.5/p},\sqrt{4/p}\}$.
	To measure the estimation accuracy for the number of change points, we simply use Mean Squared Error between the estimated number and the truth; for the location estimate, since the change point location estimation can be viewed as a special case for classification, we utilize a metric called ``Averaged Rand Index", denoted as ARI to quantify the accuracy. See \cite{rand1971objective}, \cite{hubert1985comparing} and \cite{wang2018high}. The ARI is a positive value between 0 and 1. When estimation is perfect, the ARI is 1. If there is no change points estimated, the corresponding ARI is 0. The higher the ARI, the more accurate the estimation. Here we get ARI for each replicate and finally take the average to get the averaged ARI.


	As suggested by a referee, we further implemented another method based on consistent estimation of $\|\Sigma\|_F$ by dividing the sample into three equal parts and using the median of the Jackknife-based estimator for each part, mimicking an idea first proposed in \cite{LGS2021}. Then we couple the studentized test statistic with this consistent estimator and WBS, but did not find substantial gain in the (unreported) simulation studies.  Note that an extension of this idea to the time series setting seems nontrivial, as it will involve bandwidths when forming a consistent estimator of $\|\Gamma\|_F$ and theoretical justification in the testing context is expected to be challenging.

	\centerline{Please insert Table~\ref{simulation:WBS} here!}
	
	As seen from Table \ref{simulation:WBS}, binary segmentation does not work at all in all cases due to the non-monotonic change in the mean, whereas both WBS-SN and INSPECT provide more sensible estimates. To estimate the number of change points, WBS-SN outperforms INSPECT in the two dense cases and the Sparse$(\sqrt{4/5})$ case, whereas the performance of INSPECT in the Sparse$(\sqrt{2.5/5})$ case is superior; for the change point location estimation, WBS-SN is inferior to INSPECT in the Sparse$(\sqrt{2.5/5})$ case, which is probably not superising. For the other three cases, their performance is comparable. These findings are in general consistent with our intuition that  WBS-SN targets dense alternative and INSPECT targets sparse alterative. They suggest WBS-SN can be a useful complement to INSPECT as in practice we may not know a priori whether the change is sparse or dense.

	\subsection{Change-point estimation: time series}
	\label{sub:estimateTS}
	
	For multiple change point estimation in the mean of high-dimensional time series, we compare our WBS-SN [see Algorithm \ref{algorithm:wbs}] with the double CUSUM binary segmentation algorithm (denoted as DCBS) [\cite{cho2016change}] and the segmentation algorithm based on a bias-corrected statistic in \cite{li2019change} (denoted as Li). The latter two methods have been implemented in the R packages ``hdbinseg" and ``HdcpDetect", respectively.
	\begin{example}
		\label{exp:2}
		Consider the  model $Y_{t} = \mu_t  + X_{t}$,
		where $X_{t} = (X_{t,1}, X_{t,2}, \change{...}, X_{t,p})^T$ is generated from the following three models.
		\begin{itemize}
			\item [(i)] Gaussian errors with AR(1) type convariance structure: set $\Sigma_{\epsilon} = (0.5^{|i-j|})_{i,j=1}^p$. For $t=1,2, \change{...}, n$, let $\epsilon_{t} \overset{i.i.d}{\sim} N(0, \Sigma_{\epsilon})$ and
			$X_{t}  = \rho X_{t-1} + \epsilon_{t}.$
			\item[(ii)] Non-Gaussian errors: for $t=1,2, \change{...}, n$, $j = 1,2, \change{...}, p$
			$ \epsilon_{t,j} \overset{i.i.d}{\sim} Uniform(-2,2)$, and 
			$ X_{t}  = \rho X_{t-1} + \epsilon_{t}.$
			\item[(iii)] Motivated by the simulation models used in \cite{cho2016change}, we let $\varrho_k = 0.6 (k+1)^{-1}$ and define 
			$ \epsilon_{t,j} = \sum_{k=0}^{99} \varrho_k v_{t,j-k}, \text{ where }  v_{t,j} \overset{i.i.d}{\sim} N(0, 1)$, and 
			$ X_{t,j} = \rho X_{t-1,j} + \epsilon_{t,j} + 0.2 \epsilon_{t-1,j}$ for  $t=1,2, \change{...}, n$, $j = 1,2, \change{...}, p$. 
		\end{itemize}
		Further, we let 
		\begin{align*}
			\mu_t = \boldsymbol{\bf\delta}_1 \1_{t > k_1} + \boldsymbol{\bf\delta}_2 \1_{t > k_{2}} + \boldsymbol{\bf\delta}_3 \1_{t > k_3},
		\end{align*}
		where for $ r=1,2,3 $, $\boldsymbol{\bf\delta}_r = (\delta_{r,1}, \delta_{r,1}, \change{...}, \delta_{r,p})^T \in \mathbb{R}^p$. Denote $\Pi_r = \{ j| |\delta_{r, j}|>0, j=1,2, \change{...}, p \}$ and $ |\delta_{r, j}| \sim^{i.i.d} Uniform(0.75 \theta_r, 1.25 \theta_r)$ for $j \in \Pi_r$. The signs of $\{ \delta_{r, j} \}$ are randomly sampled with equal probability. Here, we set 
		$ (k_1, |\Pi_1|,\theta_1) = (\lfloor 0.3 n \rfloor, \lfloor 0.75 p \rfloor , 0.4)$, 
		$ (k_2, |\Pi_2|,\theta_2) = (\lfloor 0.6 n \rfloor, \lfloor 0.25 p \rfloor, 0.696)$, and  
		$ (k_3, |\Pi_3|,\theta_3) = (\lfloor 0.8 n \rfloor, \lfloor 0.1 p \rfloor, 1.12)$.
		
	\end{example}
	
	\centerline{Please insert Table~\ref{wbs:old} here!}

	Table~\ref{wbs:old} reports the estimation results in terms of the frequency for the estimated number of change points among 200 simulations and ARI for WBS-SN (based on two trimming levels $\eta=0.01$ and $0.02$ and three choices of $L_0=6\lfloor n\eta\rfloor+7+\lfloor\theta n\rfloor$ with $\theta=0.1,0.15,0.2$), DCBS and Li. We fix the sample size $n=500$ and let $p=250$ and $500$.  It appears that when the temporal dependence is weak (i.e., $\rho=0.3$), DCBS performs the best in terms of estimation accuracy for the number and location of change points for all settings, WBS-SN with either trimming level and any choice of $L_0$ is comparable to Li in terms of ARI but outperforms Li in terms of estimation of the number of change points. Li's method tends to overestimate the number of change points in all settings. When the temporal dependence is moderately strong (i.e., $\rho=0.6$), WBS-SN outperforms both DCBS and Li according to both criteria. In this case, DCBS tends to underestimate the number of change-points, resulting in a small ARI.   Overall our WBS-SN method seems fairly competitive and performs quite stably for two levels of temporal dependence. The trimming level $\eta$ could have an impact on the estimation accuracy, and the magnitude of impact could depend on the data generating process (in particular, the magnitude of temporal dependence), the dimensionality and sample size etc. The choice of $L_0$ seems to have little impact on the performance in most cases, suggesting that  its optimal choice may not be necessary as long as it is in certain range.

	\clearpage
	\newpage
	\begin{table}[h!]
		\centering
		\begin{tabular}{cc|ccccccc|c|c}
			\hline
			&&\multicolumn{7}{c|}{$\hat{N} - N$}&MSE&ARI\\
			\hline
			&&-3&-2&-1&0&1&2&3&&\\
			\hline
			\multirow{3}{*}{Sparse($\sqrt{2.5/5}$)}&WBS-SN&2&12&38&48&0&0&0&1.04&0.75\\
			\hline
			&BS-SN&100&0&0&0&0&0&0&9&0\\
			\hline
			&INSPECT&0&16&1&76&7&0&0&0.72&0.85\\
			\hline
			\hline
			\multirow{3}{*}{Sparse ($\sqrt{4/5}$)}&WBS-SN&0&0&1&96&3&0&0&0.04&0.95\\
			\hline
			&BS-SN&100&0&0&0&0&0&0&9&0\\
			\hline
			&INSPECT&0&0&0&83&17&0&0&0.17&0.96\\
			\hline
			\hline
			\multirow{3}{*}{Dense($\sqrt{2.5/p}$)}&WBS-SN&2&13&36&49&0&0&0&1.06&0.70\\
			\hline
			&BS-SN&100&0&0&0&0&0&0&9&0\\
			\hline
			&INSPECT&0&30&2&45&19&4&0&1.57&0.69\\
			\hline
			\hline
			\multirow{3}{*}{Dense($\sqrt{4/p}$)}&WBS-SN&0&0&1&92&7&0&0&0.08&0.95\\
			\hline
			&BS-SN&100&0&0&0&0&0&0&9&0\\
			\hline
			&INSPECT&0&6&0&72&17&5&0&0.61&0.92\\
			\hline
		\end{tabular}
		\caption{Estimation Result for Multiple Change Points in Mean of High-dimensional Independent Data}\label{simulation:WBS}
	\end{table}
	
	\clearpage
	\newpage

	\begin{table}[h!] \centering 
		\begin{footnotesize}
			\caption{Estimation Result for Multiple Change Points in Mean of High-dimensional Time Series: Example 6.2 with $n=500$ and $p=250, 500$ based on 200 simulations. $\text{WBS-SN}^1$ and $\text{WBS-SN}^2$ correspond to trimming $\eta = 0.01$ and $\eta = 0.02$ respectively. The minimal length $L_0=6\tau+7+\lfloor \theta n\rfloor$, where $\tau=\lfloor n\eta\rfloor$ and $\theta=0.1,0.15,0.2$. The rows labeled with DCBS-Li correspond to methods of double CUSUM binary segmentation algorithm (left) in Cho (2016) and segmentation algorithm
				based on a bias-corrected statistic (right) in Li et al. (2019).  } 
			\label{wbs:old} 
			\begin{tabular}{@{\extracolsep{5pt}} ccccccccc|ccccc} 
				\\[-1.8ex]\hline 
				\hline \\[-1.8ex] 
				&  & &  &   \multicolumn{5}{c|}{$\text{WBS-SN}^1$}  &   \multicolumn{5}{c}{$\text{WBS-SN}^2$}   \\ 
				\multirow{2}{*}{Case} & \multirow{2}{*}{$p$} & \multirow{2}{*}{$\rho$} & \multirow{2}{*}{$\theta$} &   \multicolumn{4}{c}{$\#$ of change points ($\%$)} & \multirow{2}{*}{ARI} &   \multicolumn{4}{c}{$\#$ of change points ($\%$)} & \multirow{2}{*}{ARI}  \\ \cline{5-8}  \cline{10-13}
				& & &  &  $\leq 2$ & 3 & 4 & $\geq 5$  & & $\leq 2$ &3 &4 &$\geq 5$ &    \\ \hline
				\multirow{12}{*}{(i)} & \multirow{8}{*}{$250$} & \multirow{4}{*}{$0.3$} & $0.1$ & $0$ & $193$ & $6$ & $1$ & $0.932$ & $0$ & $200$ & $0$ & $0$ & $0.919$ \\ 
				&  &  & $0.15$ & $0$ & $200$ & $0$ & $0$ & $0.933$ & $0$ & $199$ & $1$ & $0$ & $0.914$ \\ 
				&  &  & $0.2$ & $0$ & $199$ & $1$ & $0$ & $0.934$ & $0$ & $199$ & $1$ & $0$ & $0.916$ \\ 
				&  &  & DCBS-Li & $0$ & $200$ & $0$ & $0$ & $0.999$ &  $0$ & $168$ & $9$ & $23$ & $0.923$ \\
				\cline{3-14}
				&  & \multirow{4}{*}{$0.6$} & $0.1$ & $14$ & $164$ & $21$ & $1$ & $0.876$ & $14$ & $178$ & $8$ & $0$ & $0.865$ \\ 
				&  & & $0.15$ & $12$ & $175$ & $13$ & $0$ & $0.887$ & $4$ & $195$ & $1$ & $0$ & $0.878$ \\ 
				&  &  & $0.2$ & $13$ & $176$ & $10$ & $1$ & $0.882$ & $5$ & $194$ & $1$ & $0$ & $0.887$ \\ 
				&  &  & DCBS-Li &  $182$ & $18$ & $0$ & $0$ & $0.633$ &  $0$ & $3$ & $0$ & $197$ & $0.477$ \\
				\cline{2-14}
				& \multirow{8}{*}{$500$} & \multirow{4}{*}{$0.3$} & $0.1$ & $0$ & $194$ & $6$ & $0$ & $0.937$ & $0$ & $198$ & $2$ & $0$ & $0.917$ \\ 
				&  &  & $0.15$ & $0$ & $198$ & $2$ & $0$ & $0.937$ & $0$ & $200$ & $0$ & $0$ & $0.913$ \\ 
				&  &  & $0.2$ & $0$ & $198$ & $2$ & $0$ & $0.936$ & $0$ & $200$ & $0$ & $0$ & $0.913$ \\ 
				&  &  & DCBS-Li &  $0$ & $200$ & $0$ & $0$ & $1.000$ &  $0$ & $176$ & $2$ & $22$ & $0.927$ \\
				\cline{3-14}
				&  & \multirow{4}{*}{$0.6$} & $0.1$ & $3$ & $170$ & $25$ & $2$ & $0.907$ & $1$ & $192$ & $7$ & $0$ & $0.894$ \\ 
				&  &  & $0.15$ & $1$ & $186$ & $13$ & $0$ & $0.915$ & $0$ & $197$ & $3$ & $0$ & $0.891$ \\ 
				&  &  & $0.2$ & $3$ & $187$ & $10$ & $0$ & $0.914$ & $0$ & $199$ & $1$ & $0$ & $0.893$   \\ 
				&  &  & DCBS-Li & $178$ & $22$ & $0$ & $0$ & $0.655$ & $0$ & $5$ & $2$ & $193$ & $0.438$ \\
				\hline 
				\hline 
				\multirow{12}{*}{(ii)} & \multirow{8}{*}{$250$} & \multirow{4}{*}{$0.3$} & $0.1$ & $0$ & $194$ & $6$ & $0$ & $0.936$ & $0$ & $197$ & $3$ & $0$ & $0.915$ \\ 
				&  &  & $0.15$ & $0$ & $196$ & $4$ & $0$ & $0.934$ & $0$ & $198$ & $2$ & $0$ & $0.917$ \\ 
				&  &  & $0.2$ & $0$ & $198$ & $2$ & $0$ & $0.934$ & $0$ & $199$ & $1$ & $0$ & $0.915$ \\ 
				&  &  & DCBS-Li &  $0$ & $200$ & $0$ & $0$ & $0.998$ & $0$ & $172$ & $10$ & $18$ & $0.938$ \\
				\cline{3-14}
				&  &\multirow{4}{*}{$0.6$} & $0.1$ & $27$ & $154$ & $18$ & $1$ & $0.869$ & $23$ & $168$ & $8$ & $1$ & $0.854$ \\ 
				&  &  & $0.15$ & $26$ & $165$ & $9$ & $0$ & $0.869$ & $10$ & $188$ & $2$ & $0$ & $0.879$ \\ 
				&  &  & $0.2$ & $26$ & $172$ & $2$ & $0$ & $0.874$ & $14$ & $185$ & $1$ & $0$ & $0.880$ \\ 
				&  &  & DCBS-Li & $147$ & $53$ & $0$ & $0$ & $0.742$ & $0$ & $3$ & $1$ & $196$ & $0.435$ \\ 
				\cline{2-14}
				& \multirow{8}{*}{$500$} & \multirow{4}{*}{$0.3$} & $0.1$ & $0$ & $195$ & $5$ & $0$ & $0.934$ & $0$ & $198$ & $2$ & $0$ & $0.921$ \\ 
				&  &  & $0.15$ & $0$ & $197$ & $3$ & $0$ & $0.934$ & $0$ & $199$ & $1$ & $0$ & $0.917$ \\ 
				&  &  & $0.2$ & $0$ & $197$ & $3$ & $0$ & $0.933$ & $0$ & $200$ & $0$ & $0$ & $0.917$ \\ 
				&  &  & DCBS-Li & $0$ & $200$ & $0$ & $0$ & $1.000$ &  $0$ & $175$ & $0$ & $25$ & $0.917$ \\
				\cline{3-14}
				&  & \multirow{4}{*}{$0.6$} & $0.1$ & $7$ & $171$ & $21$ & $1$ & $0.907$ & $0$ & $190$ & $10$ & $0$ & $0.891$ \\ 
				&  &  & $0.15$ & $8$ & $182$ & $10$ & $0$ & $0.908$ & $0$ & $199$ & $1$ & $0$ & $0.893$ \\ 
				&  &  & $0.2$ & $7$ & $188$ & $5$ & $0$ & $0.913$ & $0$ & $198$ & $2$ & $0$ & $0.895$ \\  
				&  &  & DCBS-Li & $187$ & $13$ & $0$ & $0$ & $0.554$ & $0$ & $1$ & $3$ & $196$ & $0.414$ \\
				\hline 
				\hline 
				\multirow{12}{*}{(iii)} & \multirow{8}{*}{$250$} & \multirow{4}{*}{$0.3$} & $0.1$ & $0$ & $183$ & $16$ & $1$ & $0.925$ & $0$ & $196$ & $4$ & $0$ & $0.908$ \\ 
				&  &  & $0.15$ & $0$ & $193$ & $7$ & $0$ & $0.928$ & $0$ & $200$ & $0$ & $0$ & $0.909$ \\ 
				&  &  & $0.2$ & $0$ & $199$ & $1$ & $0$ & $0.930$ & $0$ & $199$ & $1$ & $0$ & $0.905$ \\ 
				&  &  & DCBS-Li &  $0$ & $198$ & $2$ & $0$ & $0.998$ & $0$ & $162$ & $21$ & $17$ & $0.983$ \\ 
				\cline{3-14}
				&  & \multirow{4}{*}{$0.6$} & $0.1$ & $29$ & $140$ & $28$ & $3$ & $0.848$ & $41$ & $155$ & $4$ & $0$ & $0.812$ \\ 
				&  &  & $0.15$ & $26$ & $158$ & $15$ & $1$ & $0.864$ & $24$ & $172$ & $4$ & $0$ & $0.846$ \\ 
				&  &  & $0.2$ & $26$ & $161$ & $12$ & $1$ & $0.868$ & $22$ & $175$ & $3$ & $0$ & $0.848$ \\ 
				&  &  & DCBS-Li & $171$ & $28$ & $0$ & $1$ & $0.656$ &  $0$ & $14$ & $9$ & $177$ & $0.675$  \\ 
				\cline{2-14}
				&  \multirow{8}{*}{$500$} & \multirow{4}{*}{$0.3$} & $0.1$ & $0$ & $195$ & $5$ & $0$ & $0.937$ & $0$ & $196$ & $4$ & $0$ & $0.913$ \\ 
				&  &  & $0.15$ & $0$ & $195$ & $5$ & $0$ & $0.935$ & $0$ & $199$ & $1$ & $0$ & $0.914$ \\ 
				&  &  & $0.2$ & $0$ & $198$ & $2$ & $0$ & $0.937$ & $0$ & $200$ & $0$ & $0$ & $0.913$ \\ 
				&  &  & DCBS-Li & $0$ & $200$ & $0$ & $0$ & $1.000$ &  $0$ & $175$ & $19$ & $6$ & $0.977$ \\
				\cline{3-14}
				&  & \multirow{4}{*}{$0.6$} & $0.1$ & $9$ & $153$ & $35$ & $3$ & $0.895$ & $3$ & $192$ & $5$ & $0$ & $0.883$ \\ 
				&  &  & $0.15$ & $4$ & $174$ & $22$ & $0$ & $0.903$ & $0$ & $199$ & $1$ & $0$ & $0.890$ \\ 
				&  &  & $0.2$ & $4$ & $184$ & $12$ & $0$ & $0.909$ & $0$ & $200$ & $0$ & $0$ & $0.890$ \\
				&  &  & DCBS-Li &  $190$ & $10$ & $0$ & $0$ & $0.632$ &  $1$ & $9$ & $0$ & $190$ & $0.515$ \\ \hline   
			\end{tabular} 
		\end{footnotesize}
	\end{table}

	\clearpage
	\newpage

	
\end{document}